\newcommand{\mytextstyle}{}
\newcommand{\R}{\mathbb{R}}
\def\<#1,#2>{\langle #1,#2\rangle}
\newcommand{\prox}{\operatorname{prox}}
\newcommand{\eqdef}{\; { := }\;}
\declaretheorem[style=shaded,within=section]{definition}
\declaretheorem[style=shaded,sibling=definition]{theorem}
\declaretheorem[style=shaded,sibling=definition]{assumption}
\declaretheorem[style=shaded,sibling=definition]{corollary}
\declaretheorem[style=shaded,sibling=definition]{lemma}
\declaretheorem[style=shaded,sibling=definition]{example}
\title{\bf Error Compensated Distributed SGD \\ \bf Can Be Accelerated}
\author[1]{Xun Qian\thanks{website: \url{https://qianxunk.github.io}, email: \texttt{xun.qian@kaust.edu.sa}}}
\author[1]{Peter Richt\'arik\thanks{website: \url{https://richtarik.org}, email: \texttt{peter.richtarik@kaust.edu.sa}}}
\author[2]{Tong  Zhang\thanks{website: \url{http://tongzhang-ml.org}, email:  \texttt{tongzhang@ust.hk}}}
\affil[1]{King Abdullah University of Science and Technology, Thuwal, Saudi Arabia}
\affil[2]{Hong Kong University of Science and Technology, Hong Kong}
\begin{document}
	
	\maketitle
	
\begin{abstract}
Gradient compression is a recent and increasingly popular technique for reducing the communication cost in distributed training of large-scale machine learning models. In this work we focus on developing efficient distributed methods that can work for any compressor satisfying a certain contraction property, which includes both unbiased (after appropriate scaling) and biased compressors such as RandK and TopK. Applied naively, gradient compression introduces errors that either slow down convergence or lead to divergence. A popular technique designed to tackle this issue is error compensation/error feedback. Due to the difficulties associated with analyzing biased compressors, it is not known whether gradient compression with error compensation can be combined with Nesterov's acceleration. In this work, we show for the first time that error compensated gradient compression methods can be accelerated. In particular, we propose and study the error compensated loopless Katyusha method, and establish an accelerated linear convergence rate under standard assumptions. We show through numerical experiments that the proposed method converges with substantially fewer communication rounds than previous error compensated algorithms.
\end{abstract}

\section{Introduction}

In this work we consider the composite finite-sum optimization problem

\begin{equation}\label{primal-LSVRG}
\mytextstyle \min \limits_{x\in \mathbb{R}^d} \left[ P(x) \eqdef \frac{1}{n} \sum\limits_{\tau=1}^n  f^{(\tau)}(x) + \psi(x) \right],
\end{equation}
where $f(x)\eqdef\frac{1}{n}\sum_{\tau}  f^{(\tau)}(x)$ is an average of $n$ smooth\footnote{We say that a function $\phi:\R^d\to \R$ is smooth if it is differentiable, and has $L_\phi$ Lipschitz gradient: $\|\nabla \phi(x)-\phi(y)\| \leq L_\phi  \|x-y\|$ for all $x,y\in \R^d$. We say that $L_\phi$ is the {\em smoothness constant} of $\phi$.} convex functions $f^{(\tau)}:\R^d\to \R$ distributed over $n$ nodes (devices, computers), and  $\psi:\R^d \to \R\cup \{+\infty\}$ is a proper closed convex function representing a possibly nonsmooth regularizer. On each node, $f^{(\tau)}(x)$ is an average of $m$ smooth convex functions 
\begin{equation}\label{eq:bui8f0890f}
\mytextstyle f^{(\tau)}(x) = \frac{1}{m} \sum \limits_{i=1}^m f^{(\tau)}_i(x) , 
\end{equation}
representing the average loss over the training data stored on node $\tau$. While we specifically focus on the case when $m=1$, our results are also new in the $m=1$ case, and hence this regime is relevant as well. We assume throughout that problem (\ref{primal-LSVRG}) has at least one optimal solution $x^*$.   We denote the smoothness constants of functions $f$, $f^{(\tau)}$ and $f^{(\tau)}_i$ using symbols $L_f$, ${\bar L}$ and $L$, respectively. These  constants are in general related as follows: \begin{equation}\label{eq:3smoothnessconstants}L_f \leq {\bar L} \leq n L_f, \qquad \bar{L} \leq L \leq m \bar{L}.\end{equation}

When training very large scale supervised machine learning problems, such as those arising in the context of federated learning \cite{FEDLEARN, FL2017-AISTATS, FEDOPT} (see also recent surveys \cite{FL_survey_2019, FL-big}),  distributed algorithms need to be used. In such settings, communication is generally much slower than  (local) computation, which makes communication the key bottleneck in the design of efficient distributed systems. There are several ways to tackle this issue, including reliance on large mini-batches \cite{Goyal17, You17}, asynchronous learning \cite{Agarwal11, Lian15, Recht11}, local updates \cite{COCOA+journal, localSGD-Stich, localSGD-AISTATS2020, Hanzely2020, Blake2020} and communication compression (e.g., quantization and sparsification) \cite{Alistarh17, Bernstein18, Mish19, Seide14, Wen17}.  In this work we focus on the last of these techniques: communication compression.

\subsection{Communication compression}

\paragraph{Contraction and unbiased compressors.}
We say that a randomized map $Q:\R^d\to \R^d$ is a {\em  contraction compressor} if there exists a constant $0< \delta \leq 1$ such that  
\begin{equation}\label{eq:contractor}
\mathbb{E} \left[\|x - Q(x) \|^2\right] \leq (1-\delta)\|x\|^2, \quad \forall x\in \R^d. 
\end{equation}
Further, we say that a randomized map $\tilde{Q}:\R^d\to \R^d$ is  an {\em unbiased compressor} if  there exists a constant $\omega \geq 0$ such that 
\begin{equation} \label{eq:unbiased}
\mathbb{E}[{\tilde Q}(x) ] = x \quad {\rm and} \quad \mathbb{E}\|{\tilde Q}(x)\|^2 \leq (\omega + 1)\|x\|^2, \qquad \forall x\in \R^d.
\end{equation}

It is well known that (see, e.g., \cite{biased2020}) after appropriate scaling, any unbiased compressor satisfying \eqref{eq:unbiased} becomes a contraction compressor. Indeed, for any ${\tilde Q}$ satisfying \eqref{eq:unbiased}, $\frac{1}{\omega+1}{\tilde Q}$ is  a contraction compressor satisfying \eqref{eq:contractor} with $\delta = \frac{1}{\omega+1}$, as shown here:
\begin{align*}
	 \mytextstyle  \mathbb{E}\left[ \left\|\frac{1}{\omega+1}{\tilde Q}(x) - x \right\|^2 \right]  &= \mytextstyle  \frac{1}{(\omega+1)^2} \mathbb{E} \left[\|{\tilde Q}(x)\|^2\right]+ \|x\|^2 - \frac{2}{\omega+1}\mathbb{E} \left[ \langle {\tilde Q}(x), x\rangle \right]\\ 
	&\leq  \mytextstyle  \frac{1}{\omega+1}\|x\|^2 + \|x\|^2 - \frac{2}{\omega+1}\|x\|^2 = \left(1 - \frac{1}{\omega+1} \right) \|x\|^2. 
\end{align*}

Since compressors are typically applied in a scaled fashion, using a scaling stepsize, this means that for all practical purposes, the class of unbiased compressors is included in the class of contraction compressors. For examples of contraction and unbiased compressors, we refer the reader to \cite{biased2020}.

\subsection{Error compensation} While compression reduces the communicated bits in each communication round, it introduces errors, which generally leads to an increase in the number of communication rounds needed to find a solution of any predefined accuracy.  Still, compression has been found useful in practice, as the trade-off often seems to prefer compression to no compression. In order to deal with the  errors introduced by compression, some form of error compensation/error feedback is needed. 

If we assume that the accumulated  error is bounded, and in the case of  unbiased compressors, the convergence rate of error compensated SGD was shown to be the same as that of vanilla SGD \cite{Tang18}. However, if we only assume bounded second moment of the stochastic gradients, in order to guarantee the boundedness of the accumulated quantization error, some decaying factor needs to be involved in general, and  error compensated SGD is proved to have some advantage over QSGD in some perspective for convex quadratic problem \cite{Wu18}. On the other hand, for contraction compressors (for example, the TopK compressor \cite{Alistarh18}),  error compensated SGD actually has the same convergence rate as vanilla SGD \cite{Stich18, Stich19, Tang19}. Since SGD only has a sublinear convergence rate, the current error compensated methods could not get linear convergence rate. If $f$ is non-smooth and $\psi=0$, error compensated SGD was studied in \cite{karimireddy2019error} in the single node case, and the convergence rate is of order $O\left(  \nicefrac{1}{\sqrt{\delta k} } \right)$. 

For variance-reduced methods,  QSVRG  \cite{Alistarh17} handles the smooth case ($\psi\equiv 0$) and  VR-DIANA \cite{Samuel19} handles the composite  case (general $\psi$). However, the compressors of both algorithms need to be unbiased. Error compensation in VR-DIANA  does not need to be  used since this method successfully employs variance reduction (of the variance introduced by the compressor) instead. In this paper, we study error compensation in conjunction with  the acceleration mechanism employed in  loopless Katyusha (L-Katyusha) \cite{LSVRG}, for any contraction compressor. 

\subsection{Contributions}

We now summarize the main contributions of our work.

\paragraph{Acceleration for error compensation.} We develop a new communication efficient algorithm for solving the distributed optimization problem  \eqref{primal-LSVRG}--\eqref{eq:bui8f0890f} which we call {\em Error Compensated Loopless Katyusha} (ECLK); see Algorithm~\ref{alg:ec-lkatyusha}.  ECLK is the {\em first accelerated} error compensated SGD method, and can be seen as an EC variant of the Loopless Katyusha method developed in \cite{LSVRG}.

\paragraph{Iteration complexity.} We obtain the {\em first accelerated linear convergence rate} for  error compensated methods using contraction operators. The iteration complexity of ECLK is
$$
\mytextstyle O\left(  \left( \frac{1}{\delta} + \frac{1}{p} +  \sqrt{\frac{L_f}{\mu}} + \sqrt{\frac{L}{\mu p n}} + \frac{1}{\delta}\sqrt{\frac{(1-\delta){{\color{blue}\bar L}}}{\mu p}} + \sqrt{\frac{(1-\delta)L}{\mu p \delta}} \right) \log\frac{1}{\epsilon}  \right),
$$
where $p \in (0,1]$ is a parameter of the method described later. This is an improvement over the previous best known result for error compensated SGD by Beznosikov et al.~\cite{biased2020}, who obtain {\em nonaccelerated} linear rate. Moreover, they only consider the special case when $\psi\equiv 0$,  and for their linear rate, they need to assume that $\nabla f^{(\tau)}(x^*) = 0$ for all $\tau$, and that full gradients are computed  by all nodes.

If we invoke additional assumptions (Assumption~\ref{as:expcompressor} or Assumption~\ref{as:topkcompressor}) on the  contraction compressor, the iteration complexity is improved to 
$$
\mytextstyle 
O \left(  \left(  \frac{1}{\delta} + \frac{1}{p} + \sqrt{\frac{L_f}{\mu}}  + \sqrt{\frac{L}{\mu p n}} + \frac{1}{\delta} \sqrt{\frac{(1-\delta) {\color{red}L_f}}{\mu p}} + \sqrt{\frac{(1-\delta) L}{\mu p \delta {\color{red}n}}}  \right) \log \frac{1}{\epsilon} \right). 
$$
This is indeed an improvement since ${\color{red} L_f} \leq {\color{blue}{\bar L}}$ (see \eqref{eq:3smoothnessconstants}), and because of the extra scaling factor of $\color{red}n$ in the last term.
If $\delta=1$, i.e., if no compression is used, we recover the iteration complexity of the accelerated method L-Katyusha \cite{qian2019svrg}. 

\paragraph{Communication complexity.} Considering the communication complexity, the optimal choice of $p$ is $O(r(Q))$, where $r(Q)$ is the {\em compression ratio} for the compressor $Q$ defined in (\ref{eq:defrQ}). In particular, when $L_f = {\bar L} =L$, by choosing the optimal $p$, the communication complexity becomes 
$$
\mytextstyle 
O \left(  \Delta_1 \left(  \frac{r(Q)}{\delta}  + \left( r(Q)  + \frac{\sqrt{r(Q)}}{\sqrt{n}}  + \frac{\sqrt{(1-\delta)r(Q)}}{\delta} \right) \sqrt{\frac{L}{\mu}}   \right) \log \frac{1}{\epsilon}  \right), 
$$
where $\Delta_1$ is the communication cost of the uncompressed vector $x\in \R^d$. 

\section{Gradient Compression Methods}\label{sec:compress}

\subsection{TopK and RandK}
We now give two canonical examples of contraction and unbiased compression operators.

\begin{example}[TopK compressor] For a parameter $1\leq K \leq d$,  the TopK compressor is defined as
$$
({\rm TopK}(x))_{\pi(i)} = \left\{ \begin{array}{rl}
(x)_{\pi(i)}  &\mbox{ if $i\leq K$, } \\
0  \quad \quad &\mbox{ otherwise, }
\end{array} \right.
$$
where $\pi$ is a permutation of $\{1, 2, ..., d\}$ such that $(|x|)_{\pi(i)} \geq (|x|)_{\pi(i+1)}$ for $i = 1, ..., d-1$, and if $(|x|)_{\pi(i)} = (|x|)_{\pi(i+1)}$, then $\pi(i) \leq \pi(i+1)$. 
\end{example}

The definition of {\rm TopK} compressor is slightly different with that of \cite{Stich18}. In this way, {\rm TopK} compressor is a deterministic operator (well-defined when there are equal dimensions).

\begin{example}[RandK compressor] For a parameter $1\leq K \leq d$,  the {\rm RandK} compressor is defined as
$$
({\rm RandK}(x))_{i} = \left\{ \begin{array}{rl}
(x)_i  &\mbox{ if $i \in S$, } \\
0  \quad \quad &\mbox{ otherwise, }
\end{array} \right.
$$
where $S$ is chosen uniformly from the set of all $K$ element subsets of $\{1, 2, ..., d\}$.  {\rm RandK} can be used to define an unbiased compressor via scaling. Indeed, it is easy to see that
\[
\mathbb{E}\left( \frac{d}{K} {\rm RandK}(x)\right)  = x
\]
for all $x\in \R^d$.
\end{example}

For the {\rm TopK} and {\rm RandK} compressors, we have the following property. 

\begin{lemma}[Lemma A.1 in \cite{Stich18}]
	For the {\rm TopK} and {\rm RandK} compressors with $1\leq K \leq d$, we have 
	$$ \mytextstyle 
	\mathbb{E} \left[\|{\rm TopK}(x) - x \|^2 \right] \leq \left(  1 - \frac{K}{d}  \right) \|x\|^2,
	$$
	and
	$$
 	\mathbb{E}\left[ \|{\rm RandK}(x) - x \|^2\right] \leq \left(  1 - \frac{K}{d}  \right) \|x\|^2.  
	$$
\end{lemma}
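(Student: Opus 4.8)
The plan is to treat the two compressors separately, since {\rm RandK} involves an expectation over its random support whereas {\rm TopK} is deterministic; in both cases the bound should come out with $\delta = K/d$.

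For {\rm RandK}, I would start from the coordinatewise identity $\|{\rm RandK}(x) - x\|^2 = \sum_{i \notin S} (x)_i^2 = \sum_{i=1}^d (x)_i^2\, \mathbf{1}[\,i \notin S\,]$ and take expectations term by term. Since $S$ is uniform over all $K$-element subsets of $\{1,\dots,d\}$, a fixed index $i$ lies in $S$ with probability $\binom{d-1}{K-1}/\binom{d}{K} = K/d$, so $\mathbb{P}(i\notin S) = 1 - K/d$. Hence $\mathbb{E}\|{\rm RandK}(x)-x\|^2 = (1-\tfrac Kd)\sum_{i=1}^d (x)_i^2 = (1-\tfrac Kd)\|x\|^2$, which is the claimed inequality (in fact with equality).

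For {\rm TopK}, the map keeps the $K$ entries of largest absolute value and zeroes the rest, so ${\rm TopK}(x)-x$ retains exactly the $d-K$ entries of smallest magnitude and $\|{\rm TopK}(x)-x\|^2 = \sum_{i=K+1}^d (|x|)_{\pi(i)}^2$. Writing $y_i := (|x|)_{\pi(i)}^2$, so that $y_1 \ge \cdots \ge y_d \ge 0$, the statement reduces to the elementary inequality $\sum_{i=K+1}^d y_i \le \tfrac{d-K}{d}\sum_{i=1}^d y_i$, i.e.\ the average of the $d-K$ smallest $y_i$ does not exceed the overall average. I would prove this by clearing denominators: $d\sum_{i>K} y_i - (d-K)\sum_{i=1}^d y_i = K\sum_{i>K} y_i - (d-K)\sum_{i\le K} y_i = \sum_{j\le K}\sum_{i>K}(y_i-y_j) \le 0$, since $y_j \ge y_i$ whenever $j\le K<i$.

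I do not expect any real obstacle; both parts are short. The only subtlety worth a sentence is that the {\rm TopK} tie-breaking rule (the lexicographic convention on $\pi$) plays no role: the right-hand side depends only on the multiset of squared coordinates of $x$, and the sum of the $d-K$ smallest of them is independent of the admissible permutation chosen. Together the two computations show that {\rm TopK} and {\rm RandK} satisfy \eqref{eq:contractor} with $\delta = K/d$.
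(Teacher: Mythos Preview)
Your proof is correct. Note, however, that the paper does not give its own proof of this lemma: it is quoted verbatim as ``Lemma~A.1 in \cite{Stich18}'' with no argument supplied, so there is nothing in the present paper to compare against. Your two computations (the exact symmetry calculation for {\rm RandK} and the averaging inequality for {\rm TopK}) are the standard ones and match what one finds in the cited reference.
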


\subsection{Further assumptions}

We will optionally use the following additional assumptions for the contraction compressor. These assumptions are not necessary, but when used, they will lead to better complexity.

\begin{assumption}\label{as:expcompressor}
	$\mathbb{E}[Q(x)] = \delta x$ and all $x\in \R^d$. 
\end{assumption}

It is easy to verify that {\rm RandK} compressor satisfies Assumption \ref{as:expcompressor} with $\delta = \frac{K}{d}$, and $\frac{1}{\omega+1} {\tilde Q}$, where ${\tilde Q}$ is any unbiased compressor, also satisfies Assumption \ref{as:expcompressor} with $\delta = \frac{1}{\omega+1}$. 

\begin{assumption}\label{as:topkcompressor}
	For $x_{\tau} = \frac{\eta}{{\cal L}_1} g^k_{\tau} + e^k_{\tau} \in \R^d$, $\tau=1, ..., n$ and $k\geq 0$ in Algorithm \ref{alg:ec-lkatyusha}, there exist $\delta^\prime>0$ such that  $\mathbb{E}[Q(x_{\tau})] = Q(x_{\tau})$, and 
	$$\mytextstyle
	\left\|\sum \limits_{\tau=1}^n (Q(x_{\tau}) - x_{\tau} )\right\|^2 \leq (1-\delta^\prime) \left\| \sum \limits_{\tau=1}^n x_{\tau} \right\|^2. 
	$$
\end{assumption}

Since {\rm TopK} is deterministic, we have $\mathbb{E}[Q(x)] = Q(x)$ for any $x\in \R^d$. If $Q(x_{\tau})$ is close to $x_{\tau}$, then $\delta^\prime$ could be larger than $\frac{K}{d}$.  Whenever Assumption \ref{as:topkcompressor} is needed, if $\delta > \delta^\prime$, we could decrease $\delta$ such that $\delta = \min \{  \delta, \delta^\prime  \}$. In this way, we have the uniform parameter $\delta$ for the contraction compressor.

\section{Error Compensated L-Katyusha }

\subsection{Description of the method}

In this section we describe our method:  error compensated L-Katyusha (see Algorithm \ref{alg:ec-lkatyusha}). The search direction in L-Katyusha in the distributed setting ($n\geq 1$) at iteration $k$ is 
\begin{equation}\label{eq:sdinLkatyusha}
\mytextstyle \frac{1}{n} \sum \limits_{\tau=1}^n \left(  \nabla f_{i_k^\tau}^{(\tau)}(x^k) - \nabla f_{i_k^\tau}^{(\tau)}(w^k) + \nabla f^{(\tau)}(w^k) \right), 
\end{equation}
where $i_k^\tau$ is sampled uniformly and independently from $[m] \eqdef \{ 1, 2, ..., m  \}$ on the $\tau$-th node for $1\leq \tau \leq n$, $x^k$ is the current iteration, and $w^k$ is the current reference point. Whenever $\psi$ is nonzero in problem (\ref{primal-LSVRG}), $\nabla f(x^*)$ is nonzero in general, and so is $\nabla f^{(\tau)}(x^*)$. Thus, compressing the direction 
$$
 \nabla f_{i_k^\tau}^{(\tau)}(x^k) - \nabla f_{i_k^\tau}^{(\tau)}(w^k) + \nabla f^{(\tau)}(w^k)
$$ 
directly on each node would cause nonzero noise even if  $x^k$ and $w^k$ converged  to the optimal solution $x^*$. On the other hand, since $f_i^{(\tau)}$ is $L$-smooth, 
$
g^k_\tau = \nabla f_{i_k^\tau}^{(\tau)}(x^k) - \nabla f_{i_k^\tau}^{(\tau)}(w^k) 
$
could be small if  $x^k$ and $w^k$ are close enough. Thus, we compress the vector $\frac{\eta}{{\cal L}_1} g^k_\tau + e^k_\tau$ on each node instead. The accumulated error $e^{k+1}_\tau$ is equal to the compression error at iteration $k$ for each node. On each node, a scalar $u^k_\tau$ is also maintained, and only $u^k_1$ will be updated. The summation of $u^k_\tau$ is $u^k$, and we use $u^k$ to control the update frequency of the reference point $w^k$. All nodes maintain the same copies of $x^k$, $w^k$, $y^k$, $z^k$, ${\tilde g}^k$, and $u^k$. Each node sends their compressed vector ${\tilde g}^k_{\tau} = Q(\frac{\eta}{{\cal L}_1} g^k_{\tau} + e^k_{\tau})$ and $u^{k+1}_\tau$ to the other nodes. If $u^k=1$, each node also sends $\nabla f^{(\tau)}(w^k)$ to the other nodes. After the compressed vector ${\tilde g}^k_{\tau}$ is received, we add $\frac{\eta}{{\cal L}_1} \nabla f(w^k)$ to it as the search direction. We also need the following standard proximal operator: 
$$
\mytextstyle
\prox_{\eta \psi} (x) \eqdef \arg\min_y \left\{  \frac{1}{2}\|x-y\|^2 + \eta \psi(y)  \right\}. 
$$
The reference point $w^k$ will be updated if $u^{k+1}=1$. It is easy to see that $w^k$ will be updated with propobility $p$ at each iteration. 

\begin{algorithm}[tb]
	\caption{Error Compensated Loopless Katyusha (ECLK)}
	\label{alg:ec-lkatyusha}
	\begin{algorithmic}[1]
		\State {\bfseries Parameters:} stepsize parameters $\eta = \frac{1}{3\theta_1} >0$, ${\cal L}_1>0$, $\sigma_1 = \frac{\mu_f}{2{\cal L}_1}\geq 0$, $\theta_1, \theta_2 \in (0, 1)$; probability $p \in (0, 1]$
		\State {\bfseries Initialization:}
		$x^0 = y^0= z^0 = w^0 \in \R^d$; $e^0_\tau = 0 \in \R^d$; $u^0=1\in \R$
		\For{ $k = 0, 1, 2, ...$}
		\For{ $\tau = 1, ..., n$} 
		\State Sample $i_k^\tau$ uniformly and independently in $[m]$ on each node 
		\State $g^k_{\tau} = \nabla f_{i_k^\tau}^{(\tau)}(x^k) - \nabla f_{i_k^\tau}^{(\tau)}(w^k)$ 
		\State ${\tilde g}^k_{\tau} = Q(\frac{\eta}{{\cal L}_1} g^k_{\tau} + e^k_{\tau})$
		\State $e^{k+1}_{\tau} = e^k_{\tau} + \frac{\eta}{{\cal L}_1} g^k_{\tau} - {\tilde g}^k_{\tau}$
		\State $u^{k+1}_\tau = 0$ for $\tau = 2, ..., n$ 
		\State $
		u^{k+1}_1 = \left\{ \begin{array}{rl}
		1 & \mbox{ with probability $p$} \\
		0 &\mbox{ with probability $1-p$}
		\end{array} \right.
		$
		\State Send ${\tilde g}^k_{\tau}$ and $u^{k+1}_\tau$ to the other nodes 
		\State Send $\nabla f^{(\tau)}(w^k)$ to the other nodes if $u^k=1$
		\State Receive ${\tilde g}^k_{\tau}$ and $u^{k+1}_\tau$  from the other nodes
		\State Receive $\nabla f^{(\tau)}(w^k)$ from the other nodes if $u^k=1$ 
		\State ${\tilde g}^k = \frac{1}{n} \sum_{\tau=1}^n {\tilde g}^k_{\tau}$ 
		\State $u^{k+1} = \sum_{\tau=1}^n u^{k+1}_{\tau}$ 
		\State $z^{k+1} = \prox_{\frac{\eta}{(1+\eta\sigma_1) {\cal L}_1} \psi} \left(  \frac{1}{1 + \eta \sigma_1} \left(  \eta \sigma_1 x^k + z^k - {\tilde g}^k - \frac{\eta}{{\cal L}_1} \nabla f(w^k)  \right)  \right)$
		\State $y^{k+1} = x^k + \theta_1 (z^{k+1} - z^k)$
		\State $
		w^{k+1} = \left\{ \begin{array}{rl}
		y^k & \mbox{ if $u^{k+1}=1$ } \\
		w^k &\mbox{ otherwise }
		\end{array} \right.
		$
		\State $x^{k+1} = \theta_1 z^{k+1} + \theta_2 w^{k+1} + (1-\theta_1 - \theta_2)y^{k+1}$
		\EndFor
		\EndFor
	\end{algorithmic}
\end{algorithm}

\subsection{Convergence analysis: preliminaries}
We now introduce some perturbed vectors which will be used in the convergence analysis. In Algorithm~\ref{alg:ec-lkatyusha}, let $e^k = \frac{1}{n}\sum_{\tau=1}^n e^k_\tau$, $g^k = \frac{1}{n} \sum_{\tau=1}^n g^k_\tau$, and ${\tilde x}^k = x^k - \frac{1}{1 + \eta \sigma_1}e^k$, ${\tilde z}^k = z^k - \frac{1}{1+\eta \sigma_1}e^k$ for $k\geq 0$. Then 
$
e^{k+1} = \frac{1}{n} \sum_{\tau=1}^n \left(  e^k_\tau + \frac{\eta}{{\cal L}_1} g^k_\tau - {\tilde g}^k_\tau  \right) = e^k + \frac{\eta}{{\cal L}_1} g^k - {\tilde g}^k, 
$
and 
\begin{eqnarray}
 {\tilde z}^{k+1}
 &=& \mytextstyle z^{k+1} - \frac{1}{1 + \eta\sigma_1}e^{k+1} \nonumber \\
&=& \mytextstyle  \frac{1}{1 + \eta \sigma_1} \left(  \eta\sigma_1 x^k + z^k - {\tilde g}^k - \frac{\eta}{{\cal L}_1}\nabla f(w^k)  \right)  - \frac{\eta \partial \psi(z^{k+1}}{(1 + \eta \sigma_1){\cal L}_1}) - \frac{e^{k+1}}{1 + \eta\sigma_1}  \nonumber \\ 
&=& \mytextstyle  \frac{1}{1 + \eta \sigma_1} \left(  \eta\sigma_1 x^k + z^k - e^k - \frac{\eta}{{\cal L}_1}g^k - \frac{\eta}{{\cal L}_1}\nabla f(w^k)  \right) -  \frac{\eta \partial \psi(z^{k+1})}{(1 + \eta \sigma_1){\cal L}_1}  \nonumber \\ 
&=&\mytextstyle  \frac{1}{1 + \eta \sigma_1} \left(  \eta\sigma_1 {\tilde x}^k + {\tilde z}^k  - \frac{\eta}{{\cal L}_1}g^k - \frac{\eta}{{\cal L}_1}\nabla f(w^k)  \right)  -  \frac{\eta \partial \psi(z^{k+1})}{(1 + \eta \sigma_1){\cal L}_1}. \label{eq:tildezk+1}
\end{eqnarray}

The above relation plays an important role in the convergence analysis, and allows us to follow the analysis of original L-Katyusha. We need the following assumption in this section. 

\begin{assumption}\label{as:eclkatyusha}
	$f_i^{(\tau)}$ is $L$-smooth, $f^{(\tau)}$ is ${\bar L}$-smooth, $f$ is $L_f$-smooth and $\mu_f$-strongly convex, and $\psi$ is $\mu_\psi$-strongly convex.  
\end{assumption}

We define some notations which will be used to construct the Lyapunov functions in the convergence analysis. Define $\mu = \mu_f + \mu_\psi$, ${\tilde {\cal Z}}^k = \frac{{\cal L}_1 + \eta \mu/2}{2\eta} \|{\tilde z}^k - x^*\|^2$, ${\cal Y}^k = \frac{1}{\theta_1} (P(y^k) - P^*)$, and ${\cal W}^k = \frac{\theta_2}{pq\theta_1} (P(w^k) - P^*)$. From the update rule of $w^k$ in Algorithm \ref{alg:ec-lkatyusha}, it is easy to see that 
\begin{equation}\label{eq:wk+1}
\mytextstyle \mathbb{E}_k [{\cal W}^{k+1}] = (1-p) {\cal W}^k + \frac{\theta_2}{q} {\cal Y}^k, 
\end{equation}
for $k \geq 0$.  In the next lemma, we describe the evolution of the terms ${\tilde {\cal Z}}^{k}$ and ${\cal Y}^k$. 

\begin{lemma}\label{lm:zyk+1}
	If ${\cal L}_1 \geq L_f$ and $\theta_1 + \theta_2 \leq 1$, then $\mathbb{E}_k\left[  {\tilde {\cal Z}}^{k+1} + {\cal Y}^{k+1}  \right]$ can be upper bounded by
	\begin{eqnarray*}
\mytextstyle  && \frac{{\cal L}_1{\tilde {\cal Z}}^k}{{\cal L}_1 + \eta\mu/2} + (1-\theta_1 - \theta_2) {\cal Y}^k + pq{\cal W}^k 
  + \left(   \frac{{\cal L}_1}{2\eta} + \frac{\mu_f}{2}  \right)  \|e^k\|^2  + \left(   \frac{{\cal L}_1}{2\eta}  + \frac{\mu}{2}  \right) \mathbb{E}_k \|e^{k+1}\|^2 \\ 
&& \qquad -  \frac{1}{\theta_1} \left(  \theta_2 - \frac{2L}{n{\cal L}_1}  \right) (f(w^k) - f(x^k) - \langle \nabla f(x^k), w^k-x^k \rangle ).
	\end{eqnarray*}
\end{lemma}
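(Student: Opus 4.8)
The plan is to replay the one-iteration analysis of L-Katyusha \cite{LSVRG} but carried out entirely in terms of the perturbed iterates $\tilde x^k$ and $\tilde z^k$, exploiting the identity \eqref{eq:tildezk+1}, which says that $\tilde z^{k+1}$ is produced by a proximal-gradient-type step on $(\tilde x^k,\tilde z^k)$ driven by the error-free stochastic direction $g^k+\nabla f(w^k)$. First I would handle the $z$-update: using the $(1+\eta\sigma_1)$-strong convexity of the proximal subproblem together with the $\mu_\psi$-strong convexity of $\psi$, and the optimality condition encoded by the subgradient $\partial\psi(z^{k+1})$ appearing in \eqref{eq:tildezk+1}, I would derive a bound on $\mathbb{E}_k\|\tilde z^{k+1}-x^*\|^2$ in terms of $\|\tilde z^k - x^*\|^2$, some progress terms of $-(P(\cdot)-P^*)$ type, and a linear term in the noise $g^k$. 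Since $\mathbb{E}_k[g^k]=\nabla f(x^k)-\nabla f(w^k)$, the quantity $g^k+\nabla f(w^k)$ is an unbiased estimator of $\nabla f(x^k)$, so after taking $\mathbb{E}_k$ the genuinely stochastic cross-term drops out and only deterministic inner products $\langle\nabla f(x^k),\cdot\rangle$ remain.

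Next I would control the variance: a standard SVRG-type computation using $L$-smoothness and convexity of each $f^{(\tau)}_i$ bounds $\mathbb{E}_k\|g^k\|^2$ (equivalently the second moment of the search direction) by $\tfrac{2L}{n}\bigl(f(w^k)-f(x^k)-\langle\nabla f(x^k),w^k-x^k\rangle\bigr)$, which is the source of the final negative term in the statement; the condition ${\cal L}_1\ge L_f$ is what lets this term be absorbed with the right sign. Then I would invoke $L_f$-smoothness of $f$ together with $y^{k+1}=x^k+\theta_1(z^{k+1}-z^k)$ to get a descent estimate on $f(y^{k+1})$, combine it with the prox inequality for the $\psi$-part to bound $P(y^{k+1})=f(y^{k+1})+\psi(y^{k+1})$, and finally use the convex combination $x^k=\theta_1 z^k+\theta_2 w^k+(1-\theta_1-\theta_2)y^k$ (valid since $\theta_1+\theta_2\le 1$) to rewrite $\langle\nabla f(x^k),x^k-\cdot\rangle$ as a convex combination; this produces the $(1-\theta_1-\theta_2){\cal Y}^k$ and, via \eqref{eq:wk+1}, the $pq{\cal W}^k$ terms.

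The error vectors enter only through the shifts $\tilde x^k = x^k-\tfrac{1}{1+\eta\sigma_1}e^k$ and $\tilde z^k = z^k-\tfrac{1}{1+\eta\sigma_1}e^k$, so converting the bound from the perturbed iterates back to the true ones costs additive terms in $\|e^k\|^2$ and, after one further step, in $\mathbb{E}_k\|e^{k+1}\|^2$; applying Young's inequality with the stepsize-matched constants yields precisely the coefficients $\bigl(\tfrac{{\cal L}_1}{2\eta}+\tfrac{\mu_f}{2}\bigr)$ and $\bigl(\tfrac{{\cal L}_1}{2\eta}+\tfrac{\mu}{2}\bigr)$ claimed. Assembling these pieces and collecting terms gives the asserted upper bound on $\mathbb{E}_k[\tilde{\cal Z}^{k+1}+{\cal Y}^{k+1}]$.

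I expect the main obstacle to be the nonsmooth-$\psi$ bookkeeping: \eqref{eq:tildezk+1} gives $\tilde z^{k+1}$ only implicitly, through a subgradient of $\psi$ evaluated at the true iterate $z^{k+1}$ rather than at $\tilde z^{k+1}$, so the three-point prox inequality must be applied to $z^{k+1}$ and then transferred to $\tilde z^{k+1}$, carefully tracking how the $\tfrac{1}{1+\eta\sigma_1}e^{k+1}$ shift interacts with the strong-convexity modulus $\sigma_1$ and with $\mu_\psi$. Matching the precise constants in the claimed bound, while routine, is where all the care goes; no idea beyond the L-Katyusha template is needed.
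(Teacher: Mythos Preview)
Your proposal is correct and follows essentially the paper's approach, which packages the same ingredients into three auxiliary lemmas (a prox/three-point inequality transferred from $z^{k+1}$ to $\tilde z^{k+1}$ via Young's inequality, a smoothness-based descent estimate for $f(y^{k+1})$, and the SVRG variance bound) before combining them through the convex-combination decomposition of $x^k$. One minor correction: the $pq{\cal W}^k$ term does not come from \eqref{eq:wk+1} but directly from the definition ${\cal W}^k=\tfrac{\theta_2}{pq\theta_1}(P(w^k)-P^*)$, since the decomposition naturally produces $\tfrac{\theta_2}{\theta_1}(P(w^k)-P^*)$; equation \eqref{eq:wk+1} is only used later, in the proofs of Theorems~\ref{th:eclkatyusha-1} and \ref{th:eclkatyusha-2}.
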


Because of the compression, we have the additional error terms $\|e^k\|^2$ and $\|e^{k+1}\|^2$ in the evolution of ${\tilde {\cal Z}}^{k}$ and ${\cal Y}^k$ in Lemma~\ref{lm:zyk+1}. However, from the contraction property of the compressor, we can obtain inequalities controlling the evolution of $\frac{1}{n}\sum_{\tau=1}^n \|e^k_{\tau}\|^2$ and $\|e^k\|^2$ in the following two lemmas. 
\begin{lemma}\label{lm:ek+1}
	Th quantity $ \mathbb{E}_k \left[  \frac{1}{n}\sum \limits_{\tau=1}^n \|e^{k+1}_\tau \|^2   \right]$ is upper bounded by the expression
	\begin{eqnarray*}
 \mytextstyle \left(  1 - \frac{\delta}{2}  \right) \frac{1}{n} \sum \limits_{\tau=1}^n \|e^k_\tau\|^2 + \frac{2(1-\delta)\eta^2}{{\cal L}_1^2} \left(  \frac{2{\bar L}}{\delta} + L  \right) (f(w^k) - f(x^k) - \langle \nabla f(x^k), w^k-x^k \rangle ). 
	\end{eqnarray*}
\end{lemma}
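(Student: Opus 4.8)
\textit{Proof proposal.} The plan is to unwind the error recursion, invoke the contraction property once, and then control the resulting gradient-difference terms by the standard smoothness--convexity inequality. First I note that line~8 of Algorithm~\ref{alg:ec-lkatyusha} can be rewritten as $e^{k+1}_\tau = x_\tau - Q(x_\tau)$ with $x_\tau \eqdef \frac{\eta}{{\cal L}_1} g^k_\tau + e^k_\tau$; that is, $e^{k+1}_\tau$ is exactly the compression error of $Q$ applied to $x_\tau$. Taking expectation over the compressor's internal randomness only (it is independent of the index sampling), the contraction inequality \eqref{eq:contractor} yields $\mathbb{E}\big[\|e^{k+1}_\tau\|^2 \,\big|\, i_k^\tau\big] \leq (1-\delta)\|x_\tau\|^2 = (1-\delta)\big\|e^k_\tau + \frac{\eta}{{\cal L}_1} g^k_\tau\big\|^2$.

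The crucial step --- and the reason why two different smoothness constants appear --- is a bias--variance split of the stochastic gradient \emph{before} invoking Young's inequality. Write $\bar g^k_\tau \eqdef \mathbb{E}_{i_k^\tau}[g^k_\tau] = \nabla f^{(\tau)}(x^k) - \nabla f^{(\tau)}(w^k)$ and $\xi^k_\tau \eqdef g^k_\tau - \bar g^k_\tau$. Since $\xi^k_\tau$ is conditionally mean-zero, taking the remaining expectation over $i_k^\tau$ and expanding the square gives $\mathbb{E}_k\|e^{k+1}_\tau\|^2 \leq (1-\delta)\big( \big\|e^k_\tau + \frac{\eta}{{\cal L}_1}\bar g^k_\tau\big\|^2 + \frac{\eta^2}{{\cal L}_1^2}\,\mathbb{E}_k\|\xi^k_\tau\|^2\big)$. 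I then apply $\|a+b\|^2 \leq (1+\beta)\|a\|^2 + (1+\beta^{-1})\|b\|^2$ to the first term with the tuned choice $\beta = \frac{\delta}{2(1-\delta)}$, picked precisely so that $(1-\delta)(1+\beta) = 1 - \frac{\delta}{2}$ becomes the contraction factor multiplying $\|e^k_\tau\|^2$; the companion factor is $(1-\delta)(1+\beta^{-1}) = (1-\delta)\frac{2-\delta}{\delta} \leq \frac{2(1-\delta)}{\delta}$, and it multiplies $\frac{\eta^2}{{\cal L}_1^2}\|\bar g^k_\tau\|^2$.

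It remains to bound $\|\bar g^k_\tau\|^2$ and $\mathbb{E}_k\|\xi^k_\tau\|^2$ by the Bregman-type quantity $D^\tau \eqdef f^{(\tau)}(w^k) - f^{(\tau)}(x^k) - \langle \nabla f^{(\tau)}(x^k), w^k - x^k\rangle$. For the mean part, convexity and $\bar L$-smoothness of $f^{(\tau)}$ give the textbook bound $\|\bar g^k_\tau\|^2 = \|\nabla f^{(\tau)}(x^k) - \nabla f^{(\tau)}(w^k)\|^2 \leq 2\bar L\, D^\tau$. For the fluctuation, I bound the variance by the second moment, $\mathbb{E}_k\|\xi^k_\tau\|^2 \leq \mathbb{E}_k\|g^k_\tau\|^2$, apply the same inequality to each convex $L$-smooth $f^{(\tau)}_i$, and average over $i \in [m]$ to obtain $\mathbb{E}_k\|g^k_\tau\|^2 \leq 2L\, D^\tau$. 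Substituting these two estimates and collecting terms gives, for each $\tau$, $\mathbb{E}_k\|e^{k+1}_\tau\|^2 \leq \big(1-\frac{\delta}{2}\big)\|e^k_\tau\|^2 + \frac{2(1-\delta)\eta^2}{{\cal L}_1^2}\big(\frac{2\bar L}{\delta} + L\big) D^\tau$, and averaging over $\tau = 1,\dots,n$, using $\frac{1}{n}\sum_{\tau} D^\tau = f(w^k) - f(x^k) - \langle \nabla f(x^k), w^k - x^k\rangle$ by linearity, yields the claimed bound.

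I expect the only genuinely delicate point to be the bias--variance split in the middle step: one must recognize that it is the part of $g^k_\tau$ that is deterministic in the index sampling, namely $\bar g^k_\tau$, which gets amplified by the factor $1/\delta$ coming out of Young's inequality (hence the $\bar L/\delta$ contribution), whereas the stochastic fluctuation enters only through the already-attenuated cross term and thus keeps an $O(1)$ coefficient (hence the bare $L$ contribution). A crude single-step split of $e^k_\tau + \frac{\eta}{{\cal L}_1} g^k_\tau$ would instead force the larger constant $L$ everywhere. Everything else --- the contraction inequality, one Young step with a tuned parameter, and the smoothness--convexity bound $\|\nabla\phi(x)-\nabla\phi(y)\|^2 \leq 2L_\phi\big(\phi(x)-\phi(y)-\langle\nabla\phi(y),x-y\rangle\big)$ for convex $L_\phi$-smooth $\phi$ --- is routine.
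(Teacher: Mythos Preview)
Your proposal is correct and follows essentially the same route as the paper's proof: contraction on $e^k_\tau+\tfrac{\eta}{{\cal L}_1}g^k_\tau$, the same bias--variance split of $g^k_\tau$, Young's inequality with the identical parameter $\beta=\tfrac{\delta}{2(1-\delta)}$, and the two smoothness--convexity bounds with constants $\bar L$ and $L$. The only cosmetic difference is that the paper notes separately that the inequality also holds when $\delta=1$ (where $\beta$ is undefined but $e^{k+1}_\tau=0$ trivially).
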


\begin{lemma}\label{lm:ek+1-2}
	Under Assumption \ref{as:expcompressor} or  \ref{as:topkcompressor}, the quantity $\mathbb{E}_k [\|e^{k+1}\|^2] $ is upper bounded by
	\begin{eqnarray*}
 \mytextstyle  \left(  1 - \frac{\delta}{2}  \right) \|e^k\|^2 + \frac{2(1-\delta)\delta}{n^2} \sum \limits_{\tau=1}^n \|e^k_{\tau} \|^2 +  \frac{2(1-\delta)\eta^2}{{\cal L}_1^2} \left(  \frac{2L_f}{\delta} + \frac{3L}{n}  \right)  (f(w^k) - f(x^k) - \langle \nabla f(x^k), w^k-x^k \rangle ).
	\end{eqnarray*}
\end{lemma}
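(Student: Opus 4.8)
The plan is to propagate the aggregate error recursion and bound it term by term. From Algorithm~\ref{alg:ec-lkatyusha}, writing $x_\tau \eqdef \frac{\eta}{{\cal L}_1} g^k_\tau + e^k_\tau$, we have $e^{k+1}_\tau = x_\tau - Q(x_\tau)$, hence $e^{k+1} = \frac1n \sum_{\tau=1}^n (x_\tau - Q(x_\tau))$ with $\frac1n\sum_\tau x_\tau = \frac{\eta}{{\cal L}_1} g^k + e^k$. I will resolve $\mathbb{E}_k$ in two stages: first over the $n$ mutually independent compressions (conditionally on the SGD indices $i^\tau_k$), exploiting the joint structure of the errors, and then over the $i^\tau_k$.

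\emph{Compression stage.} Under Assumption~\ref{as:expcompressor}, $\mathbb{E}[Q(x_\tau)] = \delta x_\tau$; combined with the contraction inequality~\eqref{eq:contractor} this forces $\mathbb{E}\|Q(x_\tau)\|^2 \le \delta\|x_\tau\|^2$, so the centered fluctuation obeys $\mathbb{E}\|(x_\tau - Q(x_\tau)) - (1-\delta)x_\tau\|^2 = \mathbb{E}\|Q(x_\tau)\|^2 - \delta^2\|x_\tau\|^2 \le \delta(1-\delta)\|x_\tau\|^2$. Since the $n$ compressions are independent and $x_\tau - Q(x_\tau)$ has conditional mean $(1-\delta)x_\tau$, the bias--variance decomposition of $\mathbb{E}\|\frac1n\sum_\tau(x_\tau - Q(x_\tau))\|^2$ yields $\mathbb{E}\|e^{k+1}\|^2 \le (1-\delta)^2 \|\frac{\eta}{{\cal L}_1} g^k + e^k\|^2 + \frac{\delta(1-\delta)}{n^2}\sum_\tau \|x_\tau\|^2$. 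Under Assumption~\ref{as:topkcompressor}, $Q$ is deterministic on these $x_\tau$ and the stated aggregate bound (with $\delta \le \delta'$) gives directly $\|e^{k+1}\|^2 \le (1-\delta)\|\frac{\eta}{{\cal L}_1} g^k + e^k\|^2$, i.e.\ the same estimate with the last term absent; since the final claim carries that term with a positive coefficient, it suffices to finish the computation in the (harder) first case.

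\emph{SGD stage and bookkeeping.} Taking expectation over the $i^\tau_k$, and using $\mathbb{E}[g^k] = \nabla f(x^k) - \nabla f(w^k)$ together with independence of the $g^k_\tau$ across $\tau$, I split $\mathbb{E}\|\frac{\eta}{{\cal L}_1} g^k + e^k\|^2 = \|\frac{\eta}{{\cal L}_1}(\nabla f(x^k) - \nabla f(w^k)) + e^k\|^2 + \frac{\eta^2}{{\cal L}_1^2}\,\mathrm{Var}(g^k)$, and bound $\mathrm{Var}(g^k) = \frac1{n^2}\sum_\tau \mathrm{Var}(g^k_\tau) \le \frac{2L}{n} D^k$ by the standard $L$-smooth-convex inequality, where $D^k \eqdef f(w^k) - f(x^k) - \langle \nabla f(x^k), w^k - x^k \rangle \ge 0$; the same inequality gives $\frac1{n^2}\sum_\tau \mathbb{E}\|x_\tau\|^2 \le \frac{2}{n^2}\sum_\tau(\frac{\eta^2}{{\cal L}_1^2}\mathbb{E}\|g^k_\tau\|^2 + \|e^k_\tau\|^2) \le \frac{4L\eta^2}{n{\cal L}_1^2} D^k + \frac{2}{n^2}\sum_\tau\|e^k_\tau\|^2$. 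I then peel off $\|e^k\|^2$ from the mean term by Young's inequality $\|a+b\|^2 \le (1+s^{-1})\|a\|^2 + (1+s)\|b\|^2$ with $a = e^k$ and $s = \frac{2(1-\delta)}{\delta}$, which gives $(1-\delta)^2(1+s^{-1}) = (1-\delta)(1-\delta/2) \le 1-\delta/2$ and $(1-\delta)^2(1+s) \le \frac{2(1-\delta)}{\delta}$, and I bound $\|\nabla f(x^k) - \nabla f(w^k)\|^2 \le 2L_f D^k$. Collecting terms: the $\|e^k\|^2$ coefficient is $\le 1-\delta/2$, the $\sum_\tau\|e^k_\tau\|^2$ coefficient is $\frac{2\delta(1-\delta)}{n^2}$, and the total $D^k$ coefficient is $\frac{2(1-\delta)\eta^2}{{\cal L}_1^2}(\frac{2L_f}{\delta} + \frac{(1+2\delta)L}{n}) \le \frac{2(1-\delta)\eta^2}{{\cal L}_1^2}(\frac{2L_f}{\delta} + \frac{3L}{n})$ because $\delta \le 1$, which is exactly the claim.

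\emph{Main obstacle.} The SGD-variance and Young's-inequality bookkeeping is routine, essentially as for L-Katyusha. The delicate step is the compression stage, where the $n$ errors must be handled jointly rather than one at a time: it is precisely Assumption~\ref{as:expcompressor} (unbiasedness after scaling by $\delta$)---or, in the deterministic case, Assumption~\ref{as:topkcompressor}---that upgrades the contraction factor on the aggregate mean from $1-\delta$ to $(1-\delta)^2$ and confines the residual $O(\delta(1-\delta))$ loss to a term the recursion can absorb; this, together with the $1/n$ that node-independence puts in front of the SGD variance, is what replaces $\bar L$ by $L_f$ and inserts the extra factor $n$ relative to Lemma~\ref{lm:ek+1}. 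One must also check that the final numerical constant closes, which reduces to the elementary inequality $1 + 2\delta \le 3$ on $(0,1]$.
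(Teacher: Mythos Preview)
Your proof is correct and follows essentially the same route as the paper: exploit Assumption~\ref{as:expcompressor} (or~\ref{as:topkcompressor}) together with independence of the $n$ compressions to get $\mathbb{E}_k\|e^{k+1}\|^2 \le (1-\delta)\,\mathbb{E}_k\|e^k + \tfrac{\eta}{{\cal L}_1} g^k\|^2 + \tfrac{(1-\delta)\delta}{n^2}\sum_\tau \mathbb{E}_k\|x_\tau\|^2$, then split $g^k$ into mean plus variance, apply Young's inequality with parameter of order $\delta/(1-\delta)$ to isolate $\|e^k\|^2$, and bound the gradient terms via $L_f$- and $L$-smoothness. The only cosmetic difference is that you keep the sharper factor $(1-\delta)^2$ through the Young step whereas the paper first relaxes it to $(1-\delta)$; both choices land on the same final constants, closing with $1+2\delta \le 3$.
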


\subsection{Convergence analysis: main results}

From the above three lemmas, we can construct suitable Lyapunov functions which enable us to prove linear convergence. First, we construct the Lyapunov function $\Psi^k$ for the general case as follows.  Let ${\cal L}_2 \eqdef  \frac{4L}{n} +  \frac{112(1-\delta) {\bar L}}{9\delta^2} + \frac{56(1-\delta) L}{9\delta}$, and for $k \geq 0$ define 
$$
\mytextstyle 
{\Phi}^k \eqdef  {\tilde {\cal Z}}^{k} + {\cal Y}^{k} + {\cal W}^{k} + \frac{4{\cal L}_1}{\delta \eta} \cdot \frac{1}{n} \sum_{\tau=1}^n \|e^{k}\|^2. 
$$

We are now ready to state our main convergence theorems.
\begin{theorem}\label{th:eclkatyusha-1}
	Assume the compressor $Q$ in Algorithm \ref{alg:ec-lkatyusha} is a contraction compressor and Assumption~\ref{as:eclkatyusha} holds. If ${\cal L}_1 \geq \max \{  L_f, 3\mu\eta  \}$, $\theta_1 + \theta_2 \leq 1$, and $\theta_2 \geq \frac{{\cal L}_2}{2{\cal L}_1}$, then we have 
	$$ \mytextstyle 
	\mathbb{E} \left[\Phi^k\right]\leq \left(1-\min\left(  \frac{\mu}{\mu+6\theta_1 {\cal L}_1},\theta_1 + \theta_2 - \frac{\theta_2}{q}, p(1-q), \frac{\delta}{6} \right)\right)^k \Phi^0,\enspace \forall k\geq 0.
	$$
\end{theorem}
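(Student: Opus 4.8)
The plan is to prove the one-step contraction $\mathbb{E}_k[\Phi^{k+1}]\le(1-\rho)\Phi^k$ with $\rho=\min\{\tfrac{\mu}{\mu+6\theta_1\mathcal{L}_1},\ \theta_1+\theta_2-\tfrac{\theta_2}{q},\ p(1-q),\ \tfrac{\delta}{6}\}$, and then to take total expectations and iterate, which gives $\mathbb{E}[\Phi^k]\le(1-\rho)^k\Phi^0$. To get the one-step bound I would first write
\[
\mathbb{E}_k[\Phi^{k+1}]=\mathbb{E}_k[\tilde{\mathcal{Z}}^{k+1}+\mathcal{Y}^{k+1}]+\mathbb{E}_k[\mathcal{W}^{k+1}]+\tfrac{4\mathcal{L}_1}{\delta\eta}\,\mathbb{E}_k\big[\tfrac1n\textstyle\sum_{\tau=1}^n\|e^{k+1}_\tau\|^2\big]
\]
and substitute Lemma~\ref{lm:zyk+1} into the first bracket, the identity \eqref{eq:wk+1} into the second, and Lemma~\ref{lm:ek+1} into the third. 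Since this theorem does not invoke Assumption~\ref{as:expcompressor} or~\ref{as:topkcompressor}, I would not use Lemma~\ref{lm:ek+1-2}; instead, wherever the averaged error $e^k=\tfrac1n\sum_\tau e^k_\tau$ appears (in Lemma~\ref{lm:zyk+1} it does so through $(\tfrac{\mathcal{L}_1}{2\eta}+\tfrac{\mu_f}{2})\|e^k\|^2$ and $(\tfrac{\mathcal{L}_1}{2\eta}+\tfrac{\mu}{2})\mathbb{E}_k\|e^{k+1}\|^2$), I would bound it by the per-node average via Jensen's inequality, $\|e^k\|^2\le\tfrac1n\sum_\tau\|e^k_\tau\|^2$, and route the $\mathbb{E}_k\|e^{k+1}\|^2$ piece through Lemma~\ref{lm:ek+1} again. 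The outcome is an inequality of the form $\mathbb{E}_k[\Phi^{k+1}]\le c_Z\tilde{\mathcal{Z}}^k+c_Y\mathcal{Y}^k+c_W\mathcal{W}^k+c_E\cdot\tfrac1n\sum_\tau\|e^k_\tau\|^2+c_D D^k$, where $D^k\eqdef f(w^k)-f(x^k)-\langle\nabla f(x^k),w^k-x^k\rangle\ge0$ by convexity of $f$.

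The heart of the argument is then a term-by-term comparison of these coefficients with the corresponding weights in $\Phi^k$. For $\tilde{\mathcal{Z}}^k$ (weight $1$): $c_Z=\mathcal{L}_1/(\mathcal{L}_1+\eta\mu/2)$, and substituting $\eta=1/(3\theta_1)$ turns this into $1-\mu/(\mu+6\theta_1\mathcal{L}_1)$, matching the first term in the min. For $\mathcal{Y}^k$: $c_Y=(1-\theta_1-\theta_2)+\tfrac{\theta_2}{q}$, matching the second. For $\mathcal{W}^k$: $c_W=pq+(1-p)=1-p(1-q)$, matching the third. For the error term (weight $\tfrac{4\mathcal{L}_1}{\delta\eta}$): $c_E=(\tfrac{\mathcal{L}_1}{2\eta}+\tfrac{\mu_f}{2})+(\tfrac{\mathcal{L}_1}{2\eta}+\tfrac{\mu}{2})(1-\tfrac{\delta}{2})+\tfrac{4\mathcal{L}_1}{\delta\eta}(1-\tfrac{\delta}{2})$, and the needed bound $c_E\le(1-\tfrac{\delta}{6})\tfrac{4\mathcal{L}_1}{\delta\eta}$ reduces, after cancelling the common $\tfrac{4\mathcal{L}_1}{\delta\eta}$ multiples, to $(\tfrac{\mathcal{L}_1}{2\eta}+\tfrac{\mu_f}{2})+(\tfrac{\mathcal{L}_1}{2\eta}+\tfrac{\mu}{2})(1-\tfrac{\delta}{2})\le\tfrac{4\mathcal{L}_1}{3\eta}$, which follows from $\mu_f\le\mu$ and the hypothesis $\mathcal{L}_1\ge3\mu\eta$. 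Thus every retained coefficient is at most $(1-\rho)$ times its matching $\Phi^k$-weight.

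What is left is to dispose of the Bregman term by showing $c_D\le0$, after which $D^k\ge0$ lets us drop it. Here $c_D$ collects the $D^k$-contribution $-\tfrac1{\theta_1}(\theta_2-\tfrac{2L}{n\mathcal{L}_1})$ from Lemma~\ref{lm:zyk+1} together with the $\tfrac{4\mathcal{L}_1}{\delta\eta}$- and $(\tfrac{\mathcal{L}_1}{2\eta}+\tfrac{\mu}{2})$-weighted copies of Lemma~\ref{lm:ek+1}'s $D^k$-coefficient $\tfrac{2(1-\delta)\eta^2}{\mathcal{L}_1^2}(\tfrac{2\bar L}{\delta}+L)$. Using $\mathcal{L}_1\ge3\mu\eta$ (so $\mu\le\mathcal{L}_1/(3\eta)$) and $\delta\le1$ I would bound $\tfrac{\mathcal{L}_1}{2\eta}+\tfrac{\mu}{2}+\tfrac{4\mathcal{L}_1}{\delta\eta}\le\tfrac{14\mathcal{L}_1}{3\delta\eta}$; then, multiplying the inequality $c_D\le0$ through by $\theta_1$ and using $\eta\theta_1=\tfrac13$, it becomes exactly $\theta_2\ge\tfrac1{2\mathcal{L}_1}\big(\tfrac{4L}{n}+\tfrac{112(1-\delta)\bar L}{9\delta^2}+\tfrac{56(1-\delta)L}{9\delta}\big)=\tfrac{\mathcal{L}_2}{2\mathcal{L}_1}$, the last hypothesis. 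Dropping $c_D D^k$ then yields $\mathbb{E}_k[\Phi^{k+1}]\le(1-\rho)\Phi^k$, and the tower property together with induction finishes the proof.

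The main obstacle is this last step: one must track every source of the Bregman divergence $D^k$ across Lemmas~\ref{lm:zyk+1} and~\ref{lm:ek+1} and verify that the particular constant $\mathcal{L}_2$ — specifically the $\tfrac{112(1-\delta)\bar L}{9\delta^2}$ and $\tfrac{56(1-\delta)L}{9\delta}$ coefficients — is precisely what is needed for the negative progress term from Lemma~\ref{lm:zyk+1} to absorb all the positive compression-induced copies. The condition $\mathcal{L}_1\ge3\mu\eta$ is also load-bearing, entering both the error-term comparison and the $c_D$ estimate, and the replacement $\|e^k\|^2\le\tfrac1n\sum_\tau\|e^k_\tau\|^2$ is what lets the single error term in $\Phi^k$ control both the averaged and per-node error quantities that appear along the way.
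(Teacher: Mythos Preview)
Your proposal is correct and follows essentially the same route as the paper: bound $\|e^k\|^2$ and $\mathbb{E}_k\|e^{k+1}\|^2$ by the per-node average via Jensen and Lemma~\ref{lm:ek+1}, combine with Lemma~\ref{lm:zyk+1} and \eqref{eq:wk+1}, then match coefficients term-by-term and use $\mathcal{L}_1\ge 3\mu\eta$ together with $\eta\theta_1=\tfrac13$ to show the error coefficient contracts by $1-\tfrac{\delta}{6}$ and the Bregman coefficient is nonpositive exactly when $\theta_2\ge \tfrac{\mathcal{L}_2}{2\mathcal{L}_1}$. The only cosmetic difference is that the paper carries out the error/Bregman bookkeeping in two sequential steps (first absorbing the $(\tfrac{\mathcal{L}_1}{2\eta}+\tfrac{\mu}{2})$-weighted copy, then the $\tfrac{4\mathcal{L}_1}{\delta\eta}$-weighted one), whereas you combine them via the single estimate $\tfrac{\mathcal{L}_1}{2\eta}+\tfrac{\mu}{2}+\tfrac{4\mathcal{L}_1}{\delta\eta}\le\tfrac{14\mathcal{L}_1}{3\delta\eta}$; both paths yield the identical constant $\mathcal{L}_2$.
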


If Assumption \ref{as:expcompressor} or Assumption \ref{as:topkcompressor} holds, we can define the Lyapunov function $\Psi^k$ as follows. 
Let ${\cal L}_3 \eqdef \frac{4L}{n} + \frac{784(1-\delta) L_f}{9\delta^2} + \frac{56(1-\delta)L}{\delta n}$, and for $k\geq 0$ define 
$$
\mytextstyle 
\Psi^k \eqdef  {\tilde {\cal Z}}^{k} + {\cal Y}^{k} + {\cal W}^{k} + \frac{4{\cal L}_1}{\delta \eta} \|e^{k}\|^2 + \frac{28{\cal L}_1(1-\delta)}{\delta \eta n} \cdot \frac{1}{n} \sum_{\tau=1}^n \|e^{k}_\tau\|^2, 
$$

\begin{theorem}\label{th:eclkatyusha-2}
	Assume the compressor $Q$ in Algorithm \ref{alg:ec-lkatyusha} is a contraction compressor and Assumption~\ref{as:eclkatyusha} holds. Assume Assumption \ref{as:expcompressor} or Assumption \ref{as:topkcompressor} holds. If ${\cal L}_1 \geq \max \{  L_f, 3\mu\eta  \}$, $\theta_1 + \theta_2 \leq 1$, and $\theta_2 \geq \frac{{\cal L}_3}{2{\cal L}_1}$, then we have 
	$$
	\mytextstyle 
	\mathbb{E} \left[\Psi^k\right]\leq \left(1-\min\left(  \frac{\mu}{\mu+6\theta_1 {\cal L}_1},\theta_1 + \theta_2 - \frac{\theta_2}{q}, p(1-q), \frac{\delta}{6} \right)\right)^k \Psi^0,\enspace \forall k\geq 0.
	$$
\end{theorem}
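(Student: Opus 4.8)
The plan is to combine the four lemmas preceding the statement into a single one-step contraction for the Lyapunov function $\Psi^k$. Concretely, I would start from the bound on $\mathbb{E}_k[\tilde{\mathcal{Z}}^{k+1} + \mathcal{Y}^{k+1}]$ in Lemma~\ref{lm:zyk+1}, add the evolution identity \eqref{eq:wk+1} for $\mathbb{E}_k[\mathcal{W}^{k+1}]$, and then add $\frac{4\mathcal{L}_1}{\delta\eta}$ times the bound from Lemma~\ref{lm:ek+1-2} on $\mathbb{E}_k[\|e^{k+1}\|^2]$ together with $\frac{28\mathcal{L}_1(1-\delta)}{\delta\eta n}$ times the bound from Lemma~\ref{lm:ek+1} on $\mathbb{E}_k[\frac1n\sum_\tau\|e^{k+1}_\tau\|^2]$. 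The goal is to show the resulting expression is at most $(1-\rho)\Psi^k$ for $\rho = \min\{\frac{\mu}{\mu+6\theta_1\mathcal{L}_1},\ \theta_1+\theta_2-\frac{\theta_2}{q},\ p(1-q),\ \frac{\delta}{6}\}$.

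The argument then proceeds coefficient by coefficient. For the $\tilde{\mathcal{Z}}^k$ term, the factor $\frac{\mathcal{L}_1}{\mathcal{L}_1 + \eta\mu/2}$ coming out of Lemma~\ref{lm:zyk+1} equals $1-\frac{\eta\mu/2}{\mathcal{L}_1+\eta\mu/2}$; using $\eta = \frac{1}{3\theta_1}$ and the hypothesis $\mathcal{L}_1 \geq 3\mu\eta$ one checks this is $\leq 1 - \frac{\mu}{\mu+6\theta_1\mathcal{L}_1}$. For $\mathcal{Y}^k$, I collect the coefficient $1-\theta_1-\theta_2$ from Lemma~\ref{lm:zyk+1} and the $\frac{\theta_2}{q}\mathcal{Y}^k$ contributed by $\mathbb{E}_k[\mathcal{W}^{k+1}]$, giving total coefficient $1-\theta_1-\theta_2+\frac{\theta_2}{q} = 1 - (\theta_1+\theta_2-\frac{\theta_2}{q})$. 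For $\mathcal{W}^k$, the coefficients $pq$ (from Lemma~\ref{lm:zyk+1}) and $1-p$ (from \eqref{eq:wk+1}) sum to $1-p(1-q)$. For the two error terms: the $\frac{4\mathcal{L}_1}{\delta\eta}\|e^k\|^2$ coefficient contracts by $1-\frac{\delta}{2} \leq 1-\frac{\delta}{6}$ (from Lemma~\ref{lm:ek+1-2}), but Lemma~\ref{lm:ek+1-2} also spills a $\frac{2(1-\delta)\delta}{n^2}\sum_\tau\|e^k_\tau\|^2$ term, which must be absorbed into the $\frac{28\mathcal{L}_1(1-\delta)}{\delta\eta n}$-weighted $\frac1n\sum_\tau\|e^k_\tau\|^2$ term; checking $(1-\frac{\delta}{2})\cdot\frac{28(1-\delta)}{\delta n} + \frac{4}{\delta}\cdot\frac{2(1-\delta)\delta}{n} \cdot \frac{1}{n} \cdot n = \frac{28(1-\delta)}{\delta n}(1-\frac{\delta}{2}+\frac{2\delta}{28})$, and $1-\frac{\delta}{2}+\frac{\delta}{14} \leq 1-\frac{\delta}{6}$ since $\frac{1}{2}-\frac{1}{14} = \frac{3}{7} > \frac{1}{6}$, so this term also contracts by $1-\frac{\delta}{6}$.

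The remaining task — and the step I expect to be the real obstacle — is showing that the leftover ``curvature'' term $-\frac{1}{\theta_1}(\theta_2 - \frac{2L}{n\mathcal{L}_1})(f(w^k)-f(x^k)-\langle\nabla f(x^k),w^k-x^k\rangle)$ from Lemma~\ref{lm:zyk+1} dominates (is more negative than) the sum of the positive Bregman-type terms produced by Lemmas~\ref{lm:ek+1} and~\ref{lm:ek+1-2} after multiplication by $\frac{4\mathcal{L}_1}{\delta\eta}$ and $\frac{28\mathcal{L}_1(1-\delta)}{\delta\eta n}$ respectively. Writing $D^k \eqdef f(w^k)-f(x^k)-\langle\nabla f(x^k),w^k-x^k\rangle \geq 0$ by convexity, the net coefficient of $D^k$ is at most zero precisely when $\theta_2 \geq \frac{2L}{n\mathcal{L}_1} + \frac{\mathcal{L}_1\eta\theta_1}{\delta\eta\mathcal{L}_1}\cdot(\text{stuff})$; carrying out this bookkeeping with $\eta = \frac{1}{3\theta_1}$ and $\sigma_1 = \frac{\mu_f}{2\mathcal{L}_1}$, and recalling $\theta_2 \geq \frac{\mathcal{L}_3}{2\mathcal{L}_1}$ with $\mathcal{L}_3 = \frac{4L}{n} + \frac{784(1-\delta)L_f}{9\delta^2} + \frac{56(1-\delta)L}{\delta n}$, one verifies each of the three pieces of $\mathcal{L}_3$ matches (up to the constant) the corresponding contribution ($\frac{4L}{n}$ against $\frac{2L}{n\mathcal{L}_1}$, the $\frac{L_f}{\delta^2}$ piece against the $\frac{4\mathcal{L}_1}{\delta\eta}\cdot\frac{2(1-\delta)\eta^2}{\mathcal{L}_1^2}\cdot\frac{2L_f}{\delta}$ term, and the $\frac{L}{\delta n}$ piece against the $\frac{n}{n}$-scaled terms from both lemmas). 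Once the $D^k$ coefficient is shown nonpositive and hence can be dropped, assembling the five coefficient bounds yields $\mathbb{E}_k[\Psi^{k+1}] \leq (1-\rho)\Psi^k$; taking total expectation and unrolling the recursion over $k$ gives the claimed geometric rate. The $m=1$, $\psi\equiv0$ boundary cases and the verification that the parameter choices are nonvacuous (i.e.\ such $\theta_1,\theta_2,q$ exist) are routine and can be deferred to a remark.
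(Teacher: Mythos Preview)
Your overall strategy is the same as the paper's: combine Lemma~\ref{lm:zyk+1}, \eqref{eq:wk+1}, Lemma~\ref{lm:ek+1-2}, and Lemma~\ref{lm:ek+1} with the Lyapunov weights $\frac{4\mathcal{L}_1}{\delta\eta}$ and $\frac{28\mathcal{L}_1(1-\delta)}{\delta\eta n}$, then check the five coefficients. However, your bookkeeping has a real omission. Lemma~\ref{lm:zyk+1} itself produces the error terms $\bigl(\tfrac{\mathcal{L}_1}{2\eta}+\tfrac{\mu_f}{2}\bigr)\|e^k\|^2 + \bigl(\tfrac{\mathcal{L}_1}{2\eta}+\tfrac{\mu}{2}\bigr)\mathbb{E}_k\|e^{k+1}\|^2$, and you never account for them. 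The second of these must \emph{also} be expanded via Lemma~\ref{lm:ek+1-2}, contributing additional $\|e^k\|^2$, $\tfrac{1}{n}\sum_\tau\|e^k_\tau\|^2$, and $D^k$ terms. This is exactly where the hypothesis $\mathcal{L}_1\geq 3\mu\eta$ is used (to bound $\tfrac{\mathcal{L}_1}{2\eta}+\tfrac{\mu}{2}\leq\tfrac{2\mathcal{L}_1}{3\eta}$), and it is why the slack between $1-\tfrac{\delta}{2}$ and $1-\tfrac{\delta}{6}$ cannot be thrown away as you do: that slack is consumed by absorbing the extra $\tfrac{4\mathcal{L}_1}{3\eta}\|e^k\|^2$. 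Concretely, the correct $\|e^k\|^2$ coefficient is $\tfrac{4\mathcal{L}_1}{3\eta}+\tfrac{4\mathcal{L}_1}{\delta\eta}(1-\tfrac{\delta}{2})=\tfrac{4\mathcal{L}_1}{\delta\eta}(1-\tfrac{\delta}{6})$, not merely $\tfrac{4\mathcal{L}_1}{\delta\eta}(1-\tfrac{\delta}{2})$.

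Two smaller points. First, your spillover arithmetic has a slip: the cross term is $\tfrac{4\mathcal{L}_1}{\delta\eta}\cdot\tfrac{2(1-\delta)\delta}{n}=\tfrac{8\mathcal{L}_1(1-\delta)}{\eta n}$, which after factoring out $\tfrac{28\mathcal{L}_1(1-\delta)}{\delta\eta n}$ gives $\tfrac{8\delta}{28}=\tfrac{2\delta}{7}$, not $\tfrac{2\delta}{28}$ (the conclusion $1-\tfrac{\delta}{2}+\tfrac{2\delta}{7}\leq 1-\tfrac{\delta}{6}$ still holds, and with the omitted Lemma~\ref{lm:zyk+1} contribution you also pick up a $\tfrac{\delta^2}{21}$ term, which is absorbed using $\delta\leq 1$). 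Second, when you collect the $D^k$ coefficient you will find a $\bar{L}/(\delta n)$ contribution from Lemma~\ref{lm:ek+1}; the paper uses $\bar{L}\leq nL_f$ to merge it with the $L_f/\delta$ piece, which is how the constant $\tfrac{784}{9}$ in $\mathcal{L}_3$ arises. Once you track these missing pieces, your outline matches the paper's proof exactly.
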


In order to cast the above results into a more digestable form, we formulate the following corollary. 

\begin{corollary}\label{co:eclkatyusha}
	Assume the compressor $Q$ in Algorithm \ref{alg:ec-lkatyusha} is a contraction compressor and Assumption~\ref{as:eclkatyusha} holds. Let 
	$
	{\cal L}_1 = \max\left( {\cal L}_4, L_f, 3\mu \eta \right)$, $\theta_2 = \frac{{\cal L}_4}{2\max\{  L_f, {\cal L}_4  \}}$ and
	\begin{align*} 
	\mytextstyle 
	\theta_1=\left\{\begin{array}{ll}
	\min\left( \sqrt{\frac{\mu}{{\cal L}_4 p}}\theta_2, \theta_2  \right)& \mathrm{~if~}L_f \leq \frac{{\cal L}_4}{p}\\  \min\left( \sqrt{\frac{\mu}{L_f}}, \frac{p}{2}  \right) & \mathrm{otherwise}
	\end{array}\right.  .
	\end{align*}
	(i) Let ${\cal L}_4 = {\cal L}_2$. Then with some $q \in [\frac{2}{3}, 1)$, $\mathbb{E}[\Phi^k] \leq \epsilon \Phi^0$ for 
	\begin{equation}\label{eq:iter1}
	\mytextstyle 
	k \geq O\left(  \left( \frac{1}{\delta} + \frac{1}{p} +  \sqrt{\frac{L_f}{\mu}} + \sqrt{\frac{L}{\mu p n}} + \frac{1}{\delta}\sqrt{\frac{(1-\delta){\bar L}}{\mu p}} + \sqrt{\frac{(1-\delta)L}{\mu p \delta}} \right) \log\frac{1}{\epsilon}  \right). 
	\end{equation}
	
	(ii) Let ${\cal L}_4 = {\cal L}_3$. If Assumption \ref{as:expcompressor} or  \ref{as:topkcompressor} holds, then for some $q \in [\frac{2}{3}, 1)$, we have $\mathbb{E}[\Psi^k] \leq \epsilon \Psi^0$ for 
	\begin{equation}\label{eq:iter2}
\mytextstyle 
	k \geq O \left(  \left(  \frac{1}{\delta} + \frac{1}{p} + \sqrt{\frac{L_f}{\mu}}  + \sqrt{\frac{L}{\mu p n}} + \frac{1}{\delta} \sqrt{\frac{(1-\delta) L_f}{\mu p}} + \sqrt{\frac{(1-\delta) L}{\mu p \delta n}}  \right) \log \frac{1}{\epsilon} \right). 
	\end{equation}
\end{corollary}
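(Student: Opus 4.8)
The plan is to read the iteration complexity directly off Theorem~\ref{th:eclkatyusha-1} (for part~(i)) and Theorem~\ref{th:eclkatyusha-2} (for part~(ii)). Writing $\rho \eqdef \min\bigl( \tfrac{\mu}{\mu + 6\theta_1 {\cal L}_1},\ \theta_1 + \theta_2 - \tfrac{\theta_2}{q},\ p(1-q),\ \tfrac{\delta}{6} \bigr)$, the relevant theorem gives $\mathbb{E}[\Phi^k] \le (1-\rho)^k\Phi^0 \le e^{-\rho k}\Phi^0$, so $k \ge \rho^{-1}\log(1/\epsilon)$ already suffices, and the entire task reduces to showing $\rho^{-1}$ is bounded by the claimed bracket. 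The first thing to verify is that the proposed parameters are admissible. We have ${\cal L}_1 = \max\{{\cal L}_4, L_f, 3\mu\eta\} \ge \max\{L_f, 3\mu\eta\}$ by construction; $\theta_2 \le \tfrac12$ always, and $\theta_1 \le \tfrac12$ as well (it is $\le \theta_2$ in the case $L_f \le {\cal L}_4/p$ and $\le \tfrac p2$ otherwise), so $\theta_1 + \theta_2 \le 1$; and since ${\cal L}_1 \ge \max\{L_f, {\cal L}_4\}$ with ${\cal L}_4 = {\cal L}_2$ (resp.\ ${\cal L}_3$), the definition of $\theta_2$ yields $\theta_2 \ge \tfrac{{\cal L}_4}{2{\cal L}_1} = \tfrac{{\cal L}_2}{2{\cal L}_1}$ (resp.\ $\tfrac{{\cal L}_3}{2{\cal L}_1}$). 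Hence the hypotheses of the corresponding theorem hold and its linear rate applies.

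Second, I would fix a good choice of $q$. Taking $q \eqdef 1 - \min\{\tfrac14, \tfrac{\theta_1}{2\theta_2}\}$ gives $q \in [\tfrac34, 1) \subseteq [\tfrac23, 1)$, and, because $\theta_2\tfrac{1-q}{q} \le \tfrac23\theta_1$, also $\theta_1 + \theta_2 - \tfrac{\theta_2}{q} \ge \tfrac13\theta_1$ and $p(1-q) = p\min\{\tfrac14, \tfrac{\theta_1}{2\theta_2}\}$. Consequently $\rho^{-1} = \max\bigl\{\, 1 + \tfrac{6\theta_1{\cal L}_1}{\mu},\ \tfrac{3}{\theta_1},\ \tfrac1p\max\{4, \tfrac{2\theta_2}{\theta_1}\},\ \tfrac6\delta \,\bigr\}$, and since $\theta_1{\cal L}_1 = \max\{\theta_1{\cal L}_4,\ \theta_1 L_f,\ \mu\}$, everything comes down to estimating the four quantities $\theta_1{\cal L}_4$, $\theta_1 L_f$, $1/\theta_1$, and $\theta_2/\theta_1$.

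The heart of the argument is the case split inherited from the definition of $\theta_1$. When $L_f \le {\cal L}_4/p$, one has $\theta_2 \ge \tfrac p2$, and combining $\theta_1 \le \sqrt{\mu/({\cal L}_4 p)}\,\theta_2$ with the elementary bounds $\theta_2{\cal L}_4 \le \tfrac{{\cal L}_4}{2}$ and $\theta_2 L_f \le \tfrac{{\cal L}_4}{2}$ gives $\theta_1{\cal L}_4,\ \theta_1 L_f \le \tfrac12\sqrt{\mu{\cal L}_4/p}$, while $1/\theta_1$ and $\theta_2/\theta_1$ are $O\bigl(\tfrac1p + \sqrt{{\cal L}_4/(\mu p)}\bigr)$ (distinguishing whether the first or the second term in $\theta_1$ is active). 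When $L_f > {\cal L}_4/p$, one has ${\cal L}_4 < L_f$, hence $\theta_2 = \tfrac{{\cal L}_4}{2L_f}$ and ${\cal L}_1 = \max\{L_f,\ \mu/\theta_1\}$; here $\theta_1{\cal L}_4 \le \theta_1 L_f \le \sqrt{\mu L_f}$ (so $\theta_1{\cal L}_1 \le \max\{\sqrt{\mu L_f},\ \mu\}$), $1/\theta_1 \le \max\{\sqrt{L_f/\mu},\ \tfrac2p\}$, and $\theta_2/\theta_1 = O\bigl(1 + \sqrt{{\cal L}_4 p/\mu}\bigr)$. Feeding these into the expression for $\rho^{-1}$ gives, in every case, $\rho^{-1} = O\bigl( \tfrac1\delta + \tfrac1p + \sqrt{L_f/\mu} + \sqrt{{\cal L}_4/(\mu p)} \bigr)$. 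Finally, substituting ${\cal L}_4 = {\cal L}_2 = \tfrac{4L}{n} + \tfrac{112(1-\delta){\bar L}}{9\delta^2} + \tfrac{56(1-\delta)L}{9\delta}$ and applying $\sqrt{a+b+c}\le\sqrt a+\sqrt b+\sqrt c$ to $\sqrt{{\cal L}_4/(\mu p)}$ turns it into $O\bigl( \sqrt{\tfrac{L}{\mu p n}} + \tfrac1\delta\sqrt{\tfrac{(1-\delta){\bar L}}{\mu p}} + \sqrt{\tfrac{(1-\delta)L}{\mu p \delta}} \bigr)$, which is exactly \eqref{eq:iter1}; part~(ii) is the verbatim same computation with ${\cal L}_4 = {\cal L}_3$ and Theorem~\ref{th:eclkatyusha-2} in place of Theorem~\ref{th:eclkatyusha-1}.

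I expect the main obstacle to be the bookkeeping rather than any individual inequality: keeping the case split on $\theta_1$ exhaustive, choosing $q$ so that $\theta_1 + \theta_2 - \theta_2/q$ and $p(1-q)$ stay simultaneously of the right order (which forces $q$ to depend on the ratio $\theta_1/\theta_2$), and making sure the $\mu/\theta_1$ branch inside ${\cal L}_1$ never contributes more than $O(\mu)$ to $\theta_1{\cal L}_1$. The degenerate possibility $\mu > L_f$ turns out harmless, since it forces $\theta_1 = \tfrac p2$ and only affects constants.
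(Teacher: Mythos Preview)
Your argument is correct and follows the same overall strategy as the paper: verify that the proposed $\theta_1,\theta_2,{\cal L}_1$ satisfy the hypotheses of Theorem~\ref{th:eclkatyusha-1} (resp.\ \ref{th:eclkatyusha-2}), then bound $\rho^{-1}$ by $O\bigl(\tfrac{1}{\delta}+\tfrac{1}{p}+\sqrt{L_f/\mu}+\sqrt{{\cal L}_4/(\mu p)}\bigr)$ and expand $\sqrt{{\cal L}_4/(\mu p)}$.

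The execution differs in two places. First, the paper never writes down a concrete $q$; it simply asserts the final bound by citing Theorem~3.2 of \cite{qian2019svrg}, whereas you pick $q = 1 - \min\{\tfrac14,\tfrac{\theta_1}{2\theta_2}\}$ explicitly and verify the two competing terms $\theta_1+\theta_2-\theta_2/q$ and $p(1-q)$ directly. Second, the paper organizes its case analysis around whether $3\mu\eta$ is the dominant term in ${\cal L}_1$ (handling $3\mu\eta = {\cal L}_1$ separately via $\tfrac{\mu}{\mu+6\theta_1{\cal L}_1}=\tfrac17$), while you absorb that branch uniformly through $\theta_1\cdot 3\mu\eta = \mu$ and instead split along the defining dichotomy $L_f \le {\cal L}_4/p$ versus $L_f > {\cal L}_4/p$ for $\theta_1$. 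Your route is more self-contained and arguably more transparent, since the case split matches the statement's own piecewise definition; the paper's route is terser but relies on the reader having \cite{qian2019svrg} at hand.
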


Noticing that $L_f\leq {\bar L} \leq nL_f$ and ${\bar L} \leq L \leq m{\bar L}$, the iteration complexity in (\ref{eq:iter2}) could be better than that in (\ref{eq:iter1}). On the other hand, if $L_f = {\bar L} = L$, then both iteration complexities in (\ref{eq:iter1}) and (\ref{eq:iter2}) become 
\begin{equation}\label{eq:iter3}
\mytextstyle 
O\left(   \left(  \frac{1}{\delta} + \frac{1}{p} + \sqrt{\frac{L}{\mu}}  + \sqrt{\frac{L}{\mu p n}} + \frac{1}{\delta} \sqrt{\frac{(1-\delta) L}{\mu p}} \right) \log \frac{1}{\epsilon}   \right). 
\end{equation}

\section{Communication Cost}\label{sec:com}

\paragraph{Optimal choice of $p$.} In Algorithm \ref{alg:ec-lkatyusha}, when $w^k$ is updated, the uncompressed vector $\nabla f^{(\tau)}(w^k)$ need to be communicated. We denote $\Delta_1$ as the communication cost of the uncompressed vector $x\in \R^d$. Define the compress ratio $r(Q)$ for the contraction compressor $Q$ as 
\begin{equation}\label{eq:defrQ}
r(Q) \eqdef \sup_{x \in \R^d} \left\{  \mathbb{E} \left[ \frac{\mbox{ communication cost of $Q(x)$ }}{ \Delta_1} \right] \right\}. 
\end{equation}
Denote the total expected communication cost for $k$ iterations as ${\cal T}_k$. The expected communication cost at iteration $k \geq 1$ is bounded by $\Delta_1 r(Q) + 1 + p\Delta_1$, where 1 bit is needed to communicate $u^{k+1}_\tau$, and the expected communication cost at iteration $k =0$ is bounded by $\Delta_1 r(Q) + 1 + \Delta_1$. Hence, 
\begin{eqnarray}
{\cal T}_k &\leq & \mytextstyle  \Delta_1 r(Q) + 1 + \Delta_1 + (\Delta_1 r(Q) + 1 + p\Delta_1)k \nonumber  \\ 
&\leq &\mytextstyle  \Delta_1 r(Q) + 1 + \Delta_1 + (\Delta_1 r(Q) + 1)\left(  1 + \frac{p}{r(Q)}  \right) k. \label{eq:Tk}
\end{eqnarray}

Next, we discuss how to choose $p$ to minimize the total expected communication cost. From Corollary~\ref{co:eclkatyusha} (i) and (\ref{eq:Tk}), we have $\mathbb{E}[\Phi^k] \leq \epsilon \Phi^0$ for 
\begin{eqnarray*}
{\cal T}_k &=& \mytextstyle O \left(  (\Delta_1r(Q) + 1) \left(  1 + \frac{p}{r(Q)}  \right)  \left(  a + \frac{1}{p} + \frac{b}{\sqrt{p}}  \right) \log \frac{1}{\epsilon}   \right) \\ 
&=& \mytextstyle O \left(  (\Delta_1r(Q) + 1) \left(  a + \frac{pa}{r(Q)} + \frac{1}{p} + \frac{1}{r(Q)} + \frac{b}{\sqrt{p}} + \frac{b\sqrt{p}}{r(Q)}   \right) \log \frac{1}{\epsilon}   \right), 
\end{eqnarray*}
where we denote $a = \frac{1}{\delta} + \sqrt{\frac{L_f}{\mu}}$ and $b = \sqrt{\frac{L}{\mu n}} + \frac{1}{\delta}\sqrt{\frac{(1-\delta){\bar L}}{\mu}} + \sqrt{\frac{(1-\delta)L}{\mu \delta}} $. Noticing that $\frac{b}{\sqrt{p}} + \frac{b\sqrt{p}}{r(Q)} \geq \frac{2b}{\sqrt{r(Q)}}$, we have 
$$
\mytextstyle 
O\left(   a + \frac{pa}{r(Q)} + \frac{1}{p} + \frac{1}{r(Q)} + \frac{b}{\sqrt{p}} + \frac{b\sqrt{p}}{r(Q)}   \right) \geq O\left(  a + \frac{1}{r(Q)} + \frac{b}{\sqrt{r(Q)}} \right), 
$$
and the above lower bound holds for $p = O(r(Q))$. Hence, in order to minimize the total expected communication cost, the optimal choice of $p$ is $O(r(Q))$. 

Under Assumption \ref{as:expcompressor} or  \ref{as:topkcompressor}, from Corollary \ref{co:eclkatyusha} (ii), by the same analysis,  in order to minimize the total expected communication cost for $\mathbb{E}[\Psi^k] \leq \epsilon \Psi^0$, the optimal choice of $p$ is also $O(r(Q))$. 

\paragraph{Comparison to the uncompressed L-Katyusha.} For simplicity, we assume $L_f = {\bar L} = L$ and $\Delta_1 r(Q) \geq O(1)$.  From (\ref{eq:iter3}) and (\ref{eq:Tk}), by choosing $p=O(r(Q))$, we have 
\begin{equation}\label{eq:Tk22}
\mytextstyle 
{\cal T}_k = O \left(  \Delta_1 \left(  \frac{r(Q)}{\delta}  + \left( r(Q)  + \frac{\sqrt{r(Q)}}{\sqrt{n}}  + \frac{\sqrt{(1-\delta)r(Q)}}{\delta} \right) \sqrt{\frac{L}{\mu}}   \right) \log \frac{1}{\epsilon}  \right). 
\end{equation}
For uncompressed L-Katyusha, by choosing $p=1$, we have 
\begin{equation}\label{eq:Tk33}
\mytextstyle 
{\cal T}_k = O \left(  \Delta_1 \sqrt{\frac{L}{\mu}} \log \frac{1}{\epsilon}  \right). 
\end{equation}
If $\frac{\sqrt{r(Q)}}{\delta} < 1$, then the communication cost in (\ref{eq:Tk22}) is less than that in (\ref{eq:Tk33}). For TopK compressor, $r(Q) = \frac{K(64 + \lceil \log d \rceil)}{64d}$, and in practice $\delta$ can be much larger than $\frac{K}{d}$, sometimes even in order $O(1)$.

\begin{figure*}[t]
	\vspace{0cm}
	\centering
	\begin{tabular}{cccc}
		\includegraphics[width=0.45\linewidth]{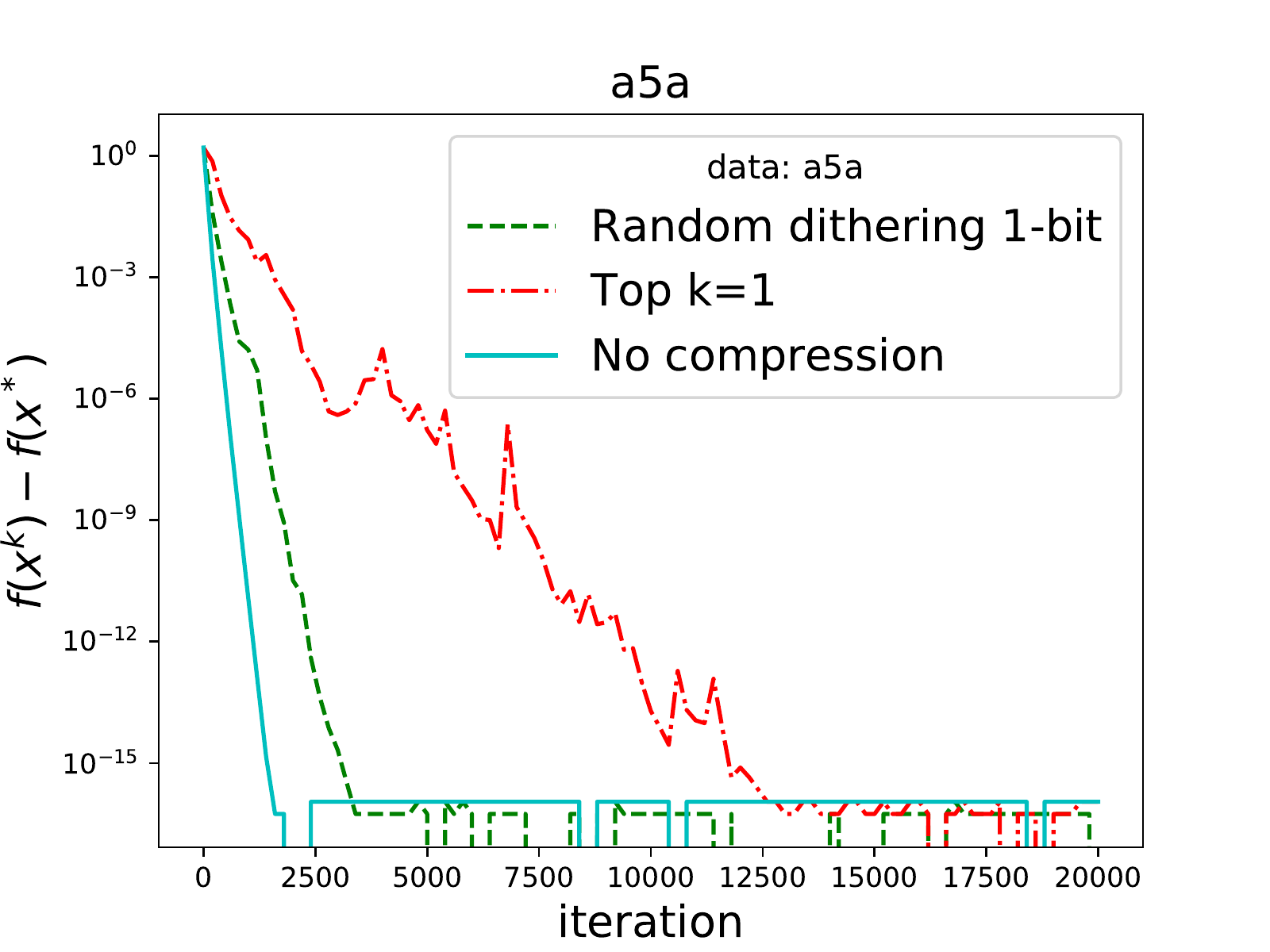}&
		\includegraphics[width=0.45\linewidth]{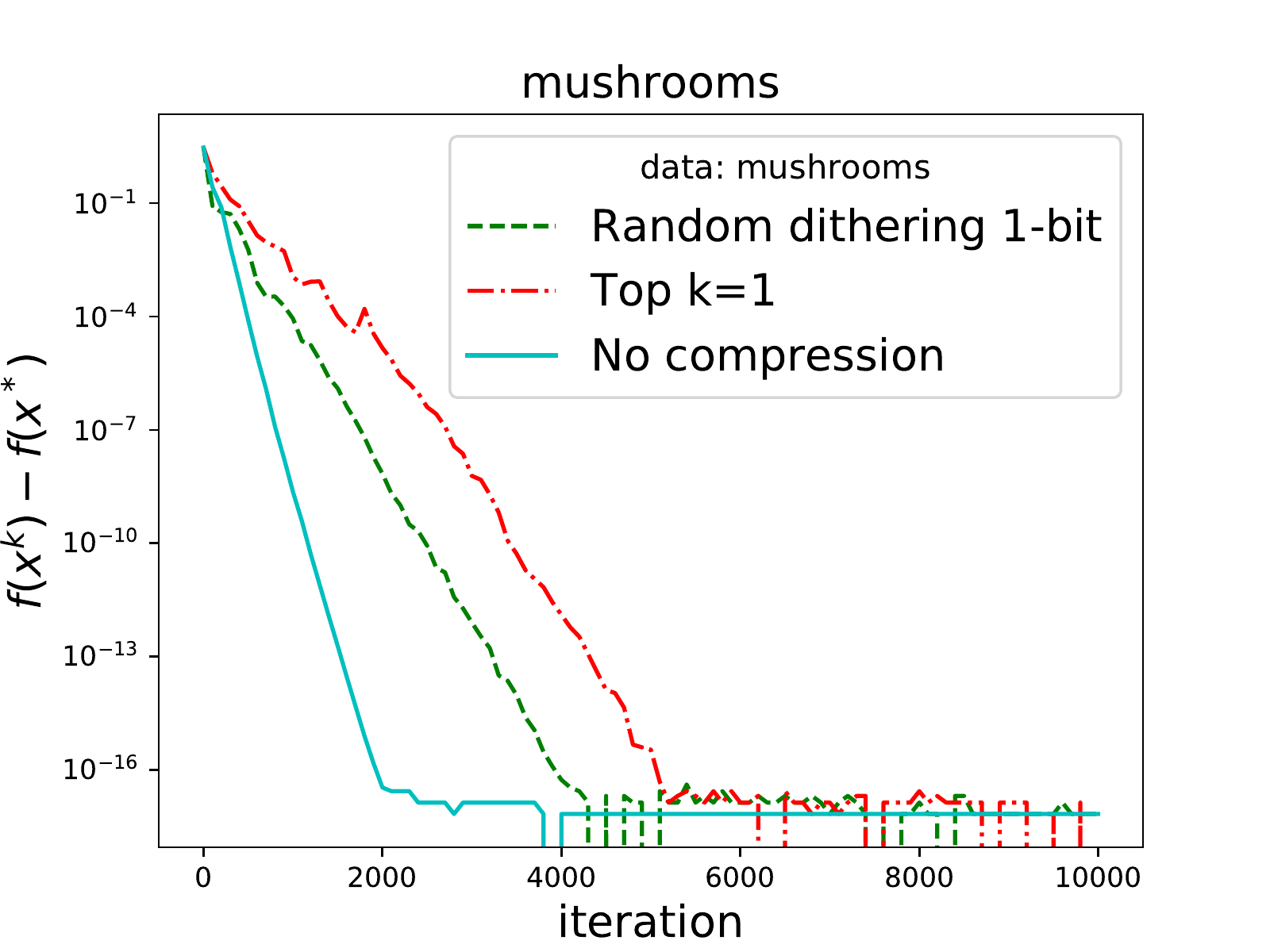}&
		
	\end{tabular}
	\vskip -0.2cm
	\caption{The iteration complexity performance of Top k=1 vs Random dithering 1-bit vs No compression for the error compensated L-Katyusha on \texttt{a5a} and  \texttt{mushrooms} datasets.}
	\label{fig:iter}
\end{figure*}

\begin{figure*}[t]
	\vspace{0cm}
	\centering
	\begin{tabular}{cccc}
		\includegraphics[width=0.3\linewidth]{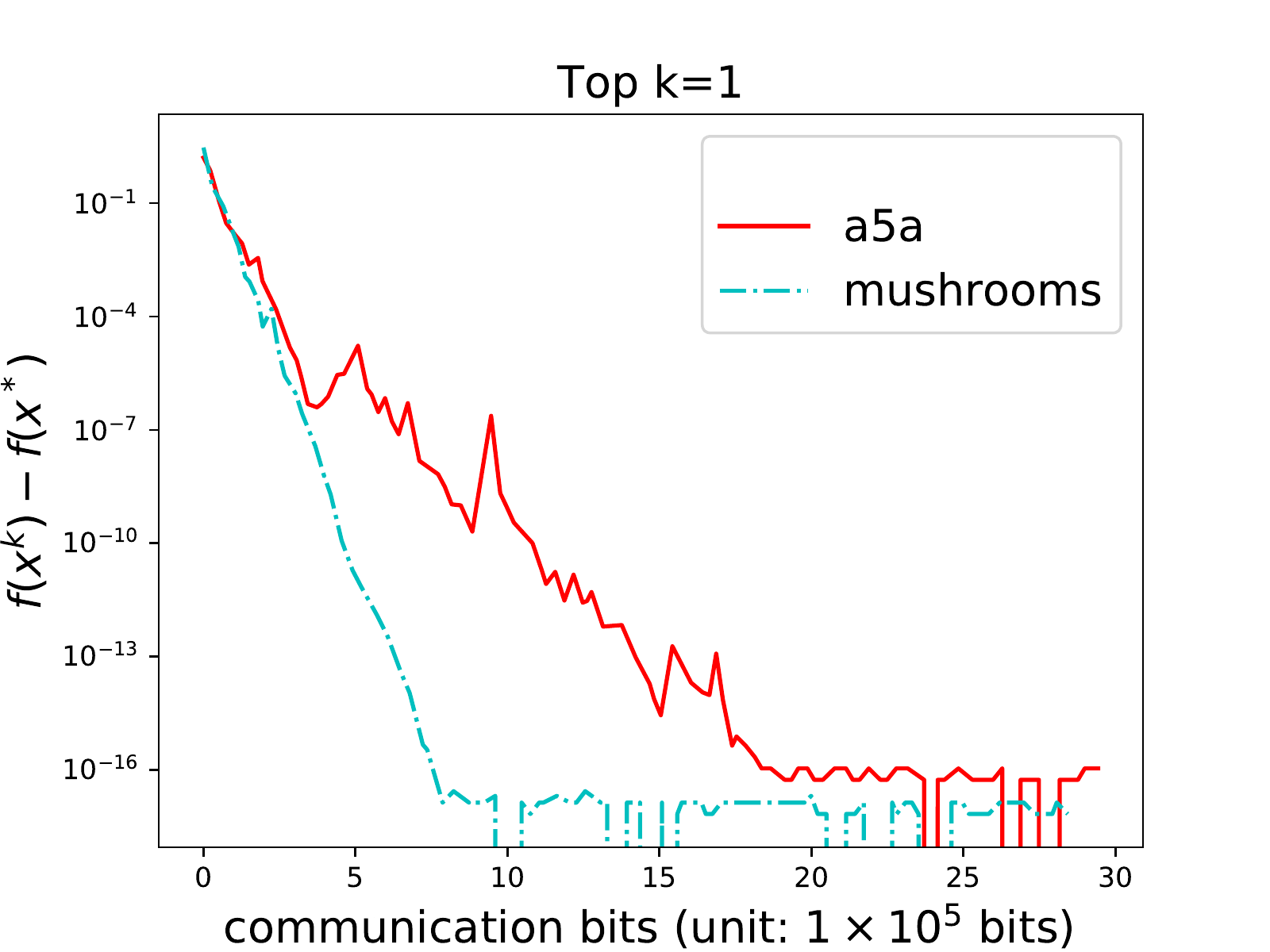}&
		\includegraphics[width=0.3\linewidth]{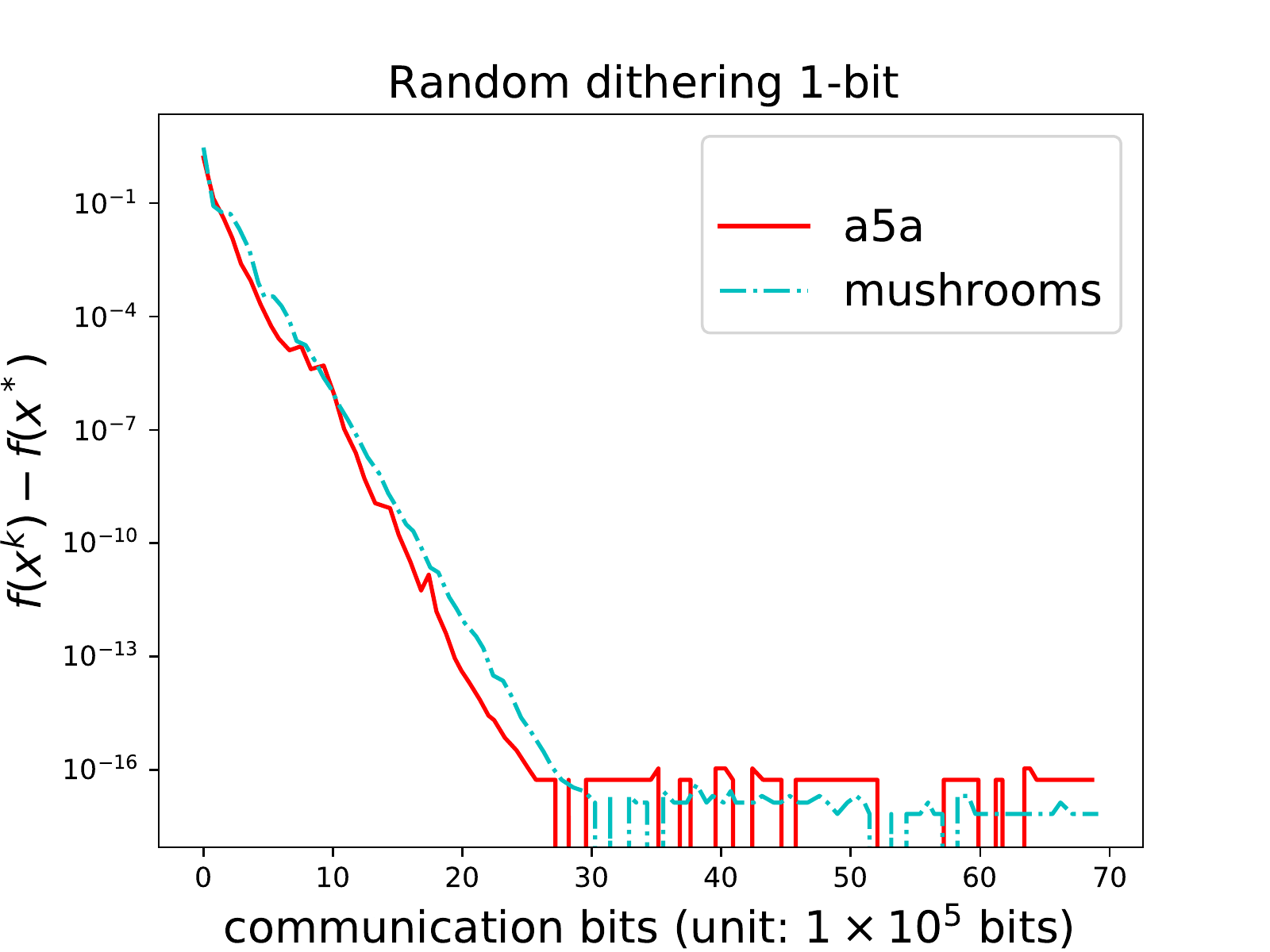}&
		\includegraphics[width=0.3\linewidth]{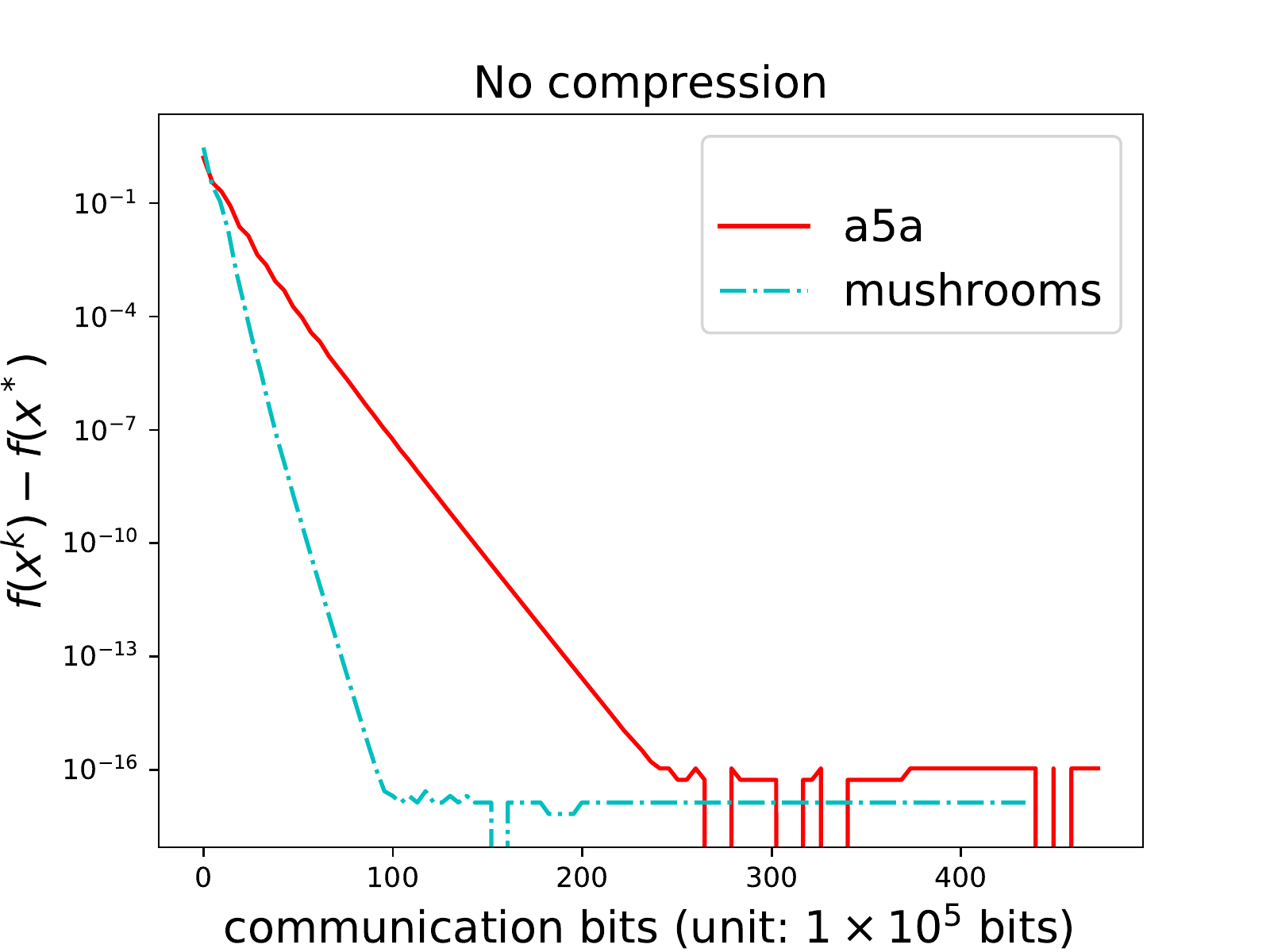}&
		
	\end{tabular}
	\vskip -0.2cm
	\caption{The communication complexity performance of TopK (with $K=1$) vs Random dithering 1-bit vs No compression for the error compensated L-Katyusha on \texttt{a5a} and  \texttt{mushrooms} datasets.}
	\label{fig:combit}
\end{figure*}

\section{Experiments}

In this section, we experimentally study the performance of  error compensated L-Katyusha used with several contraction compressors on the logistic regression problem for binary classification: 
$$
x \mapsto \log\left(  1 + \exp(-y_iA^T_i x)  \right) + \frac{\lambda}{2} \|x\|^2, 
$$
where $\{  A_i, y_i  \}$ is the data point. We use two datasets, namely, $a5a$ and $mushrooms$ from the LIBSVM library \cite{chang2011libsvm}. The regularization parameter $\lambda=10^{-3}$. The number of nodes in our experiments is $20$, and the optimal solution is obtained by running the uncompressed L-Katyusha for $10^5$ iterations. We use the parameter setting in Corollary \ref{co:eclkatyusha} (ii). We calculate the theoretical $L_f$ and $L$ as $L_f^{th}$ and $L^{th}$ respectively. Then we choose $L_f = t \cdot L_f^{th}$ and $L=t \cdot L^{th}$, and search the best $t$ for $t \in \{ 10^{-k} | k=0, 1, 2...  \}$ in each case. 

\paragraph{Compressors.} In our experiments, we use two contraction compressors: TopK compressor with $K=1$ and compressor $\frac{1}{\omega + 1} {\tilde Q}$, where ${\tilde Q}$ is the unbiased random dithering compressor in \cite{Alistarh17} with level $s=2^1$. For TopK compressor, $r(Q) = \frac{K(64 + \lceil \log d \rceil)}{64d}$. For random dithering compressor, from Theorem~3.2 in \cite{Alistarh17}, we can get 
$$
\mytextstyle 
r(Q) = \frac{1}{64 d} \left(  \left(  3 + \left(  \frac{3}{2} + o(1)  \right) \log \left(  \frac{2(s^2 + d)}{s(s + \sqrt{d})}  \right)  \right)s(s + \sqrt{d})  + 64 \right). 
$$

\subsection{TopK vs Random dithering vs No compression}

In this subsection, we compare the uncompressed L-Katyusha with the error compensated L-Katyusha with two contraction compressors: TopK compressor and random dithering compressor. For simplicity, we choose $p=r(Q)$, and explore the influence of $p$ in the next subsection. Figure \ref{fig:iter} and figure \ref{fig:combit} show the iteration complexity and communication complexity of them respectively. We can see that compared with the uncompressed L-Katyusha, the error compensated L-Katyusha with TopK and random dithering compressors need more iterations to reach the optimal solution, but need much less communication bits. In particular, the error compensated L-Katyusha with Top1 compressor is more than 10 times faster than the umcompressed L-Katyusha considering the communication complexity.

\begin{figure*}[t]
	\centering
	\begin{tabular}{ccc}
		\includegraphics[width=0.45\linewidth]{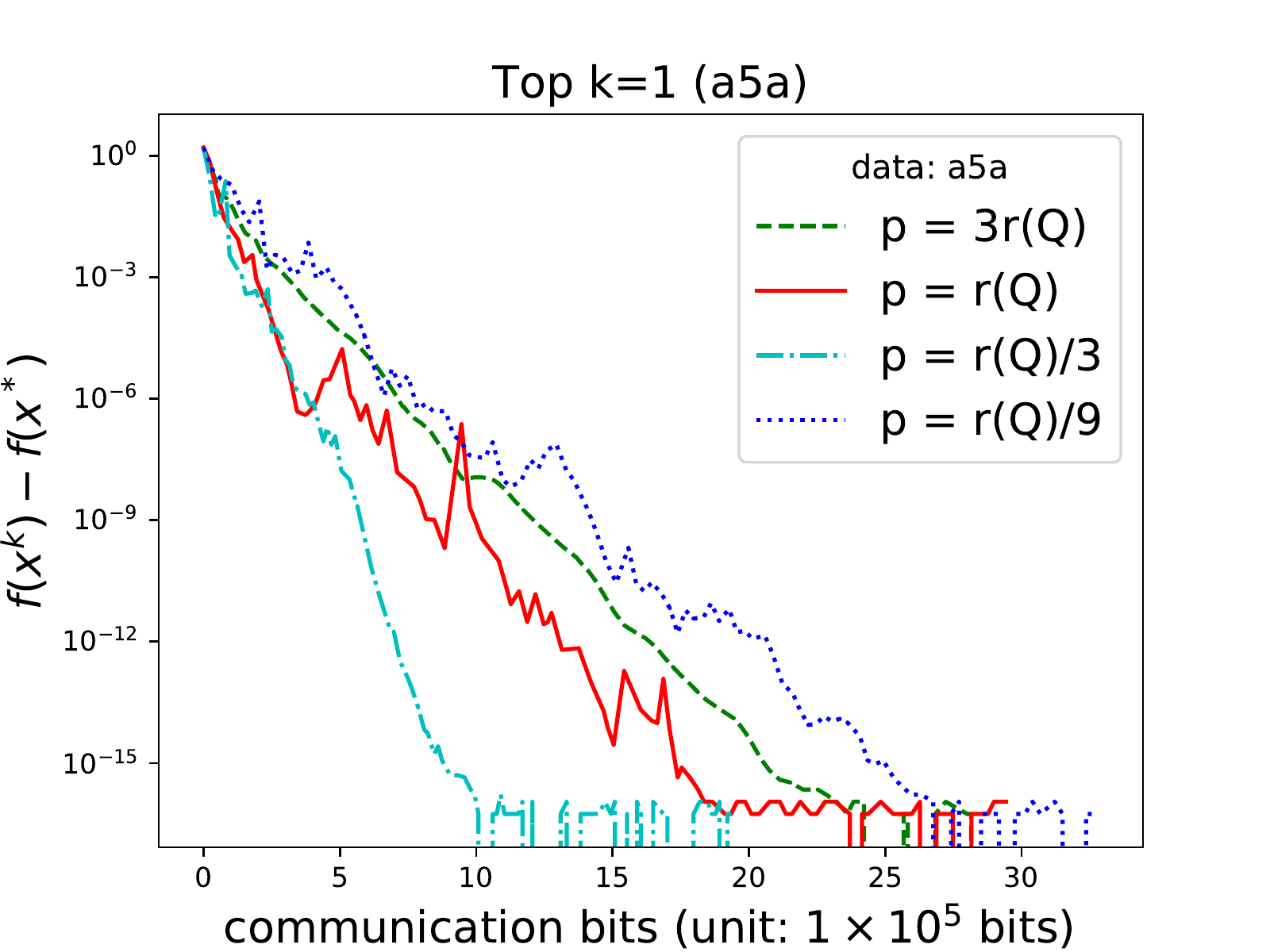}&
		\includegraphics[width=0.45\linewidth]{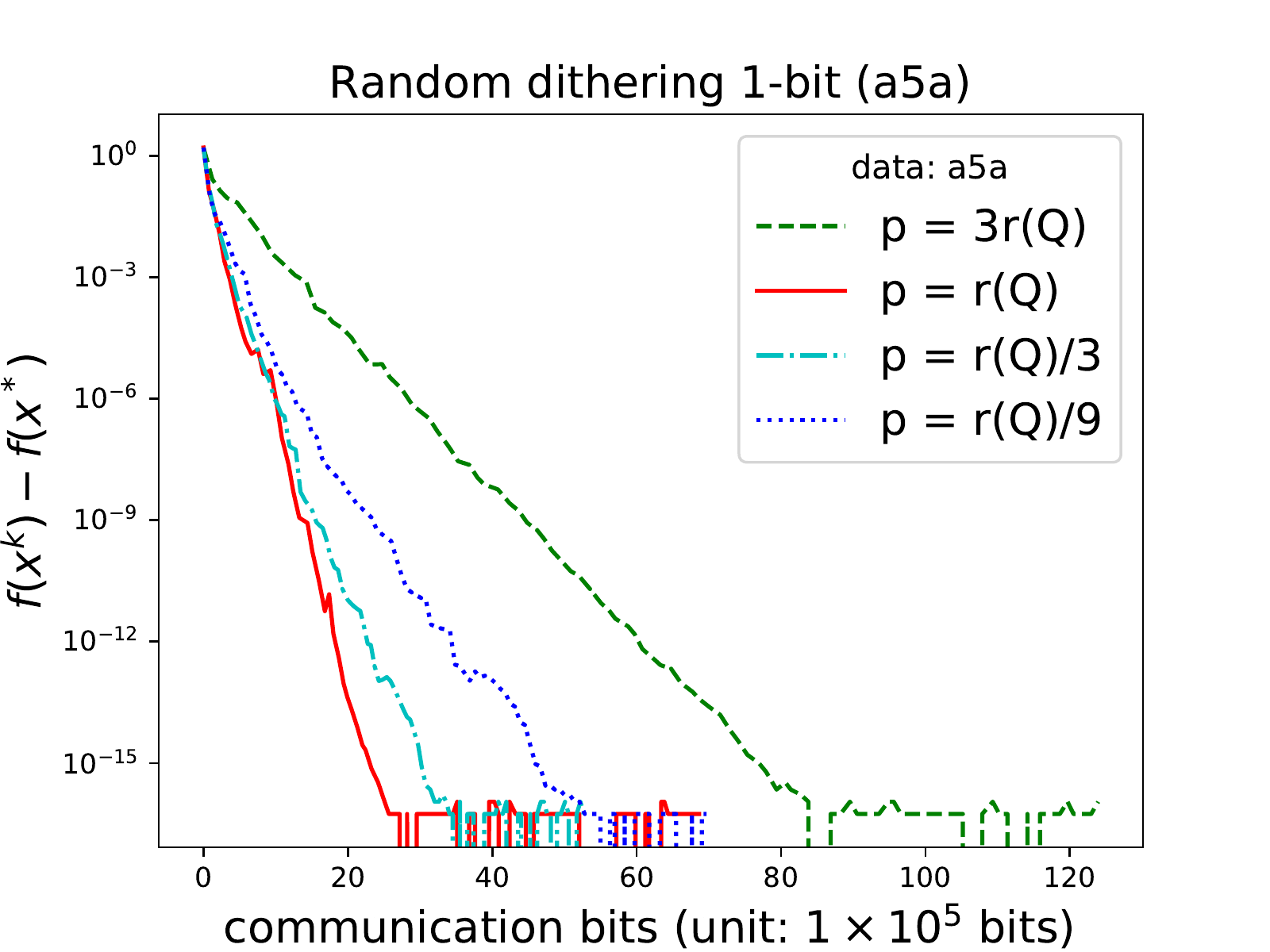} \\
		\includegraphics[width=0.45\linewidth]{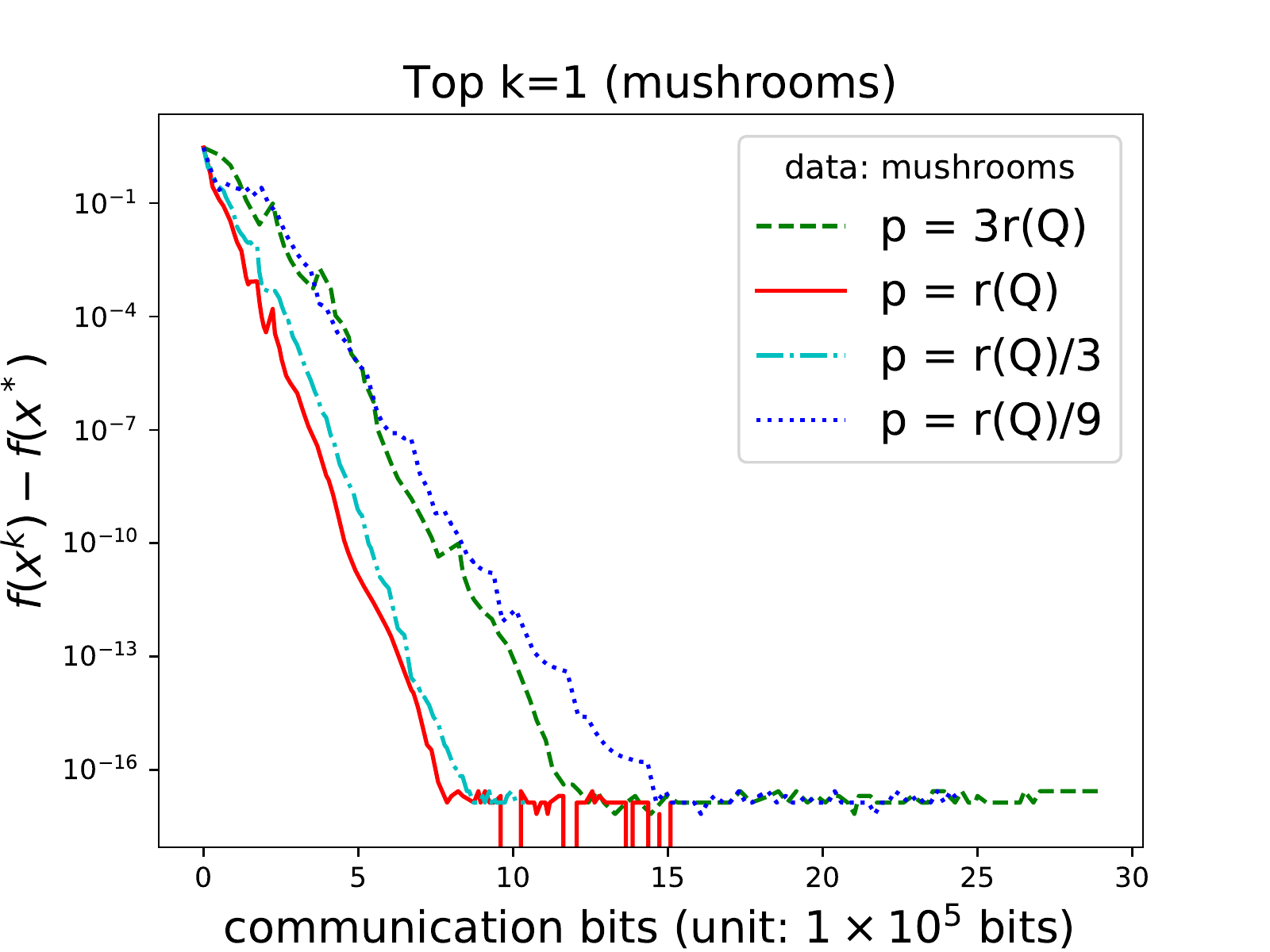}&
		\includegraphics[width=0.45\linewidth]{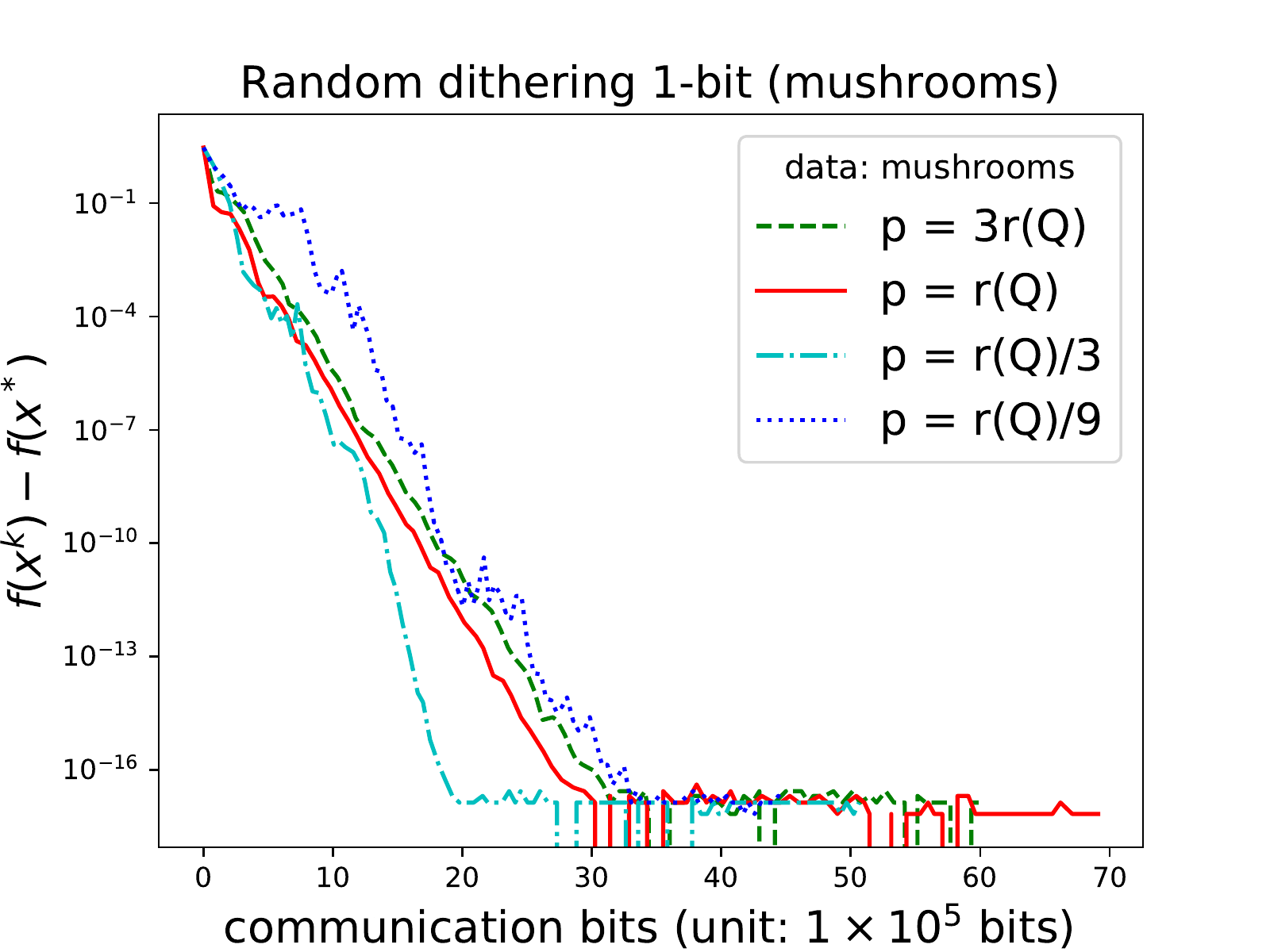}&
	\end{tabular}
	\caption{The influence of $p$ for the communication complexity performance of Top k=1 and Random dithering 1-bit for the error compensated L-Katyusha on \texttt{a5a} and  \texttt{mushrooms} datasets.}
	\label{fig:p}
\end{figure*}

\begin{figure*}[t]
	\vspace{0cm}
	\centering
	\begin{tabular}{cccc}
		\includegraphics[width=0.48\linewidth]{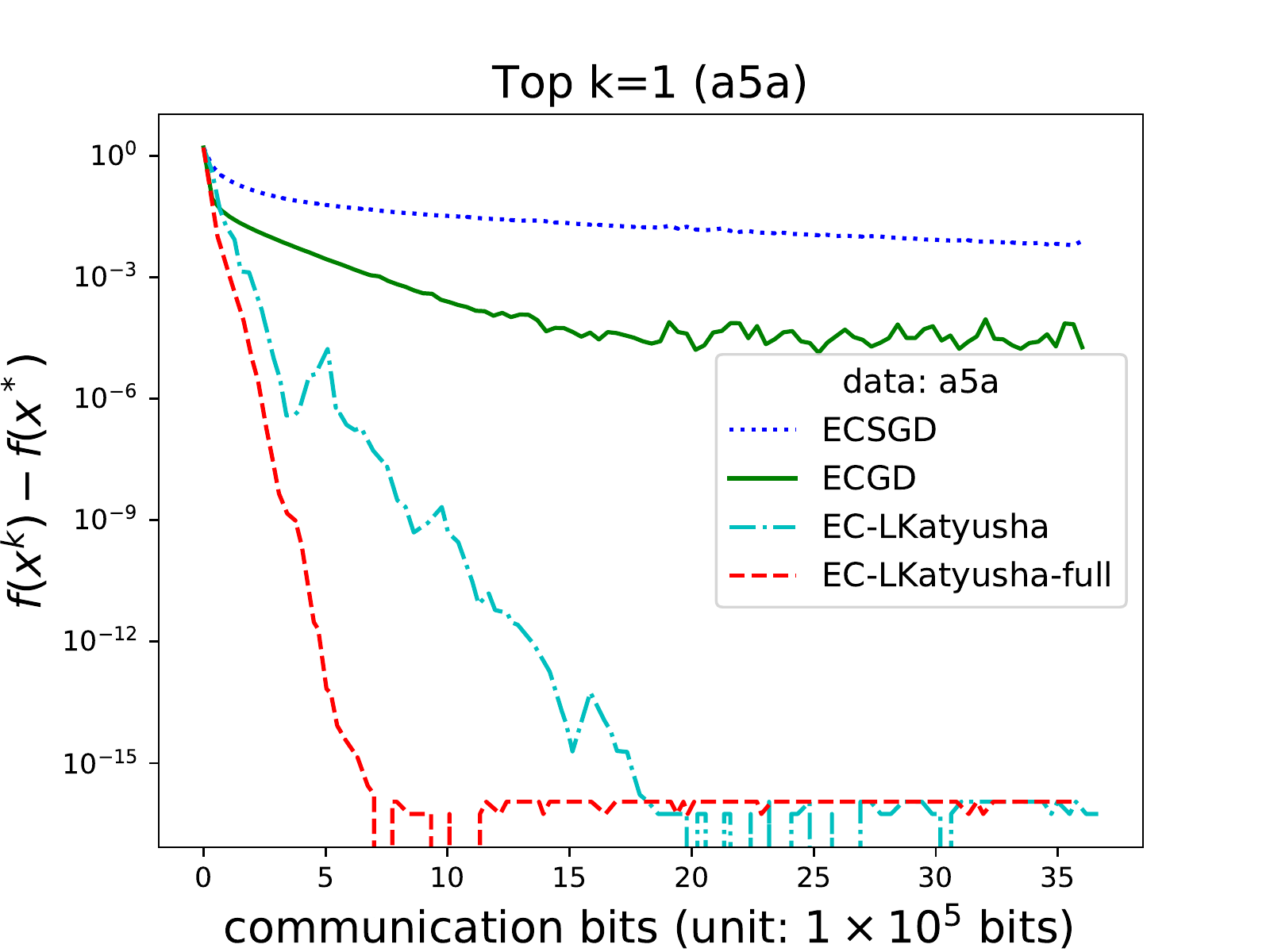}&
		\includegraphics[width=0.48\linewidth]{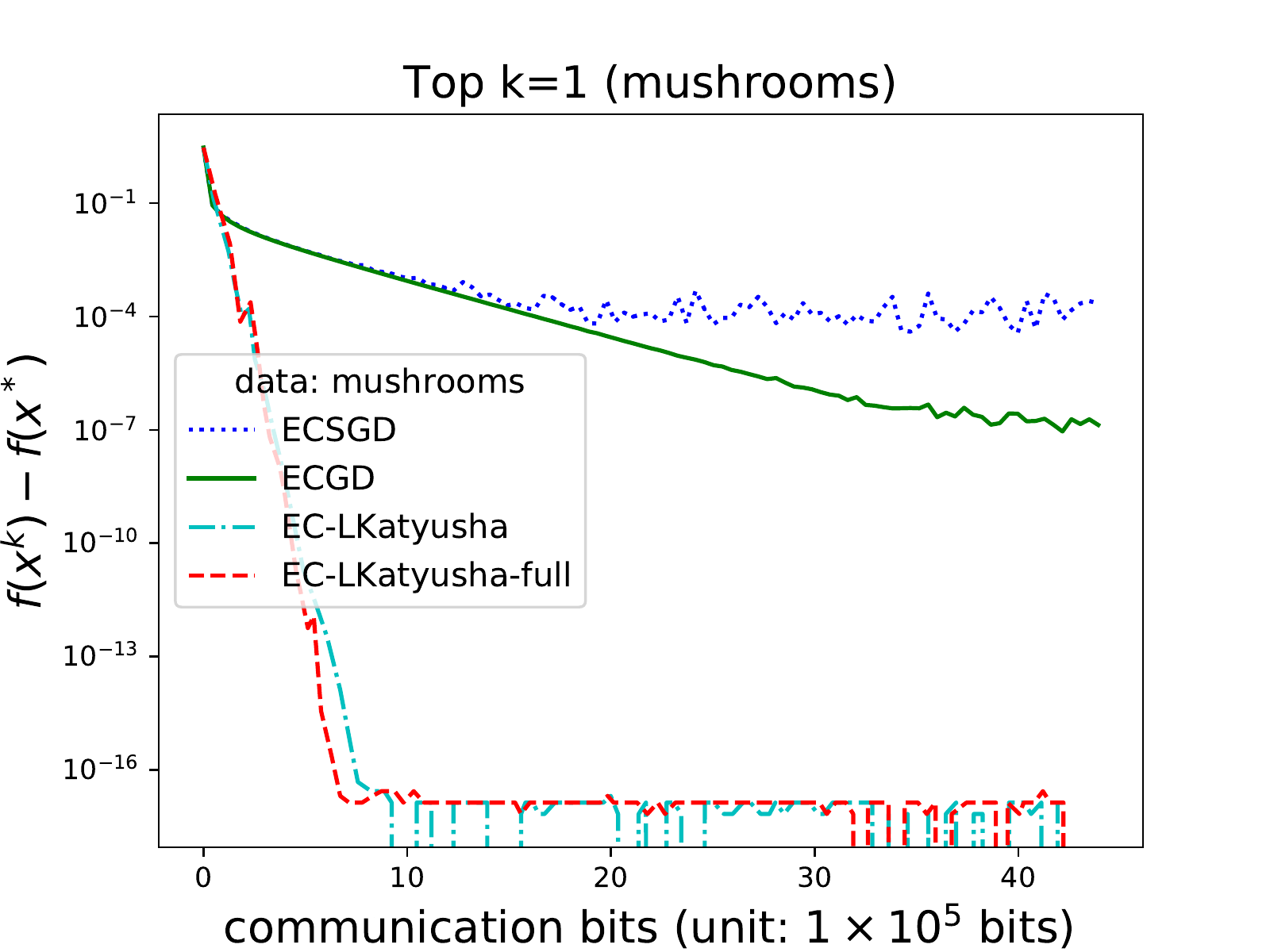}&
		
	\end{tabular}
	\vskip -0.2cm
	\caption{The communication complexity performance of ECSGD vs ECGD vs EC-LKatyusha vs EC-LKatyusha-full for Top k=1 on \texttt{a5a} and  \texttt{mushrooms} datasets.}
	\label{fig:topkcompare}
\end{figure*}

\begin{figure*}[t]
	\vspace{0cm}
	\centering
	\begin{tabular}{cccc}
		\includegraphics[width=0.48\linewidth]{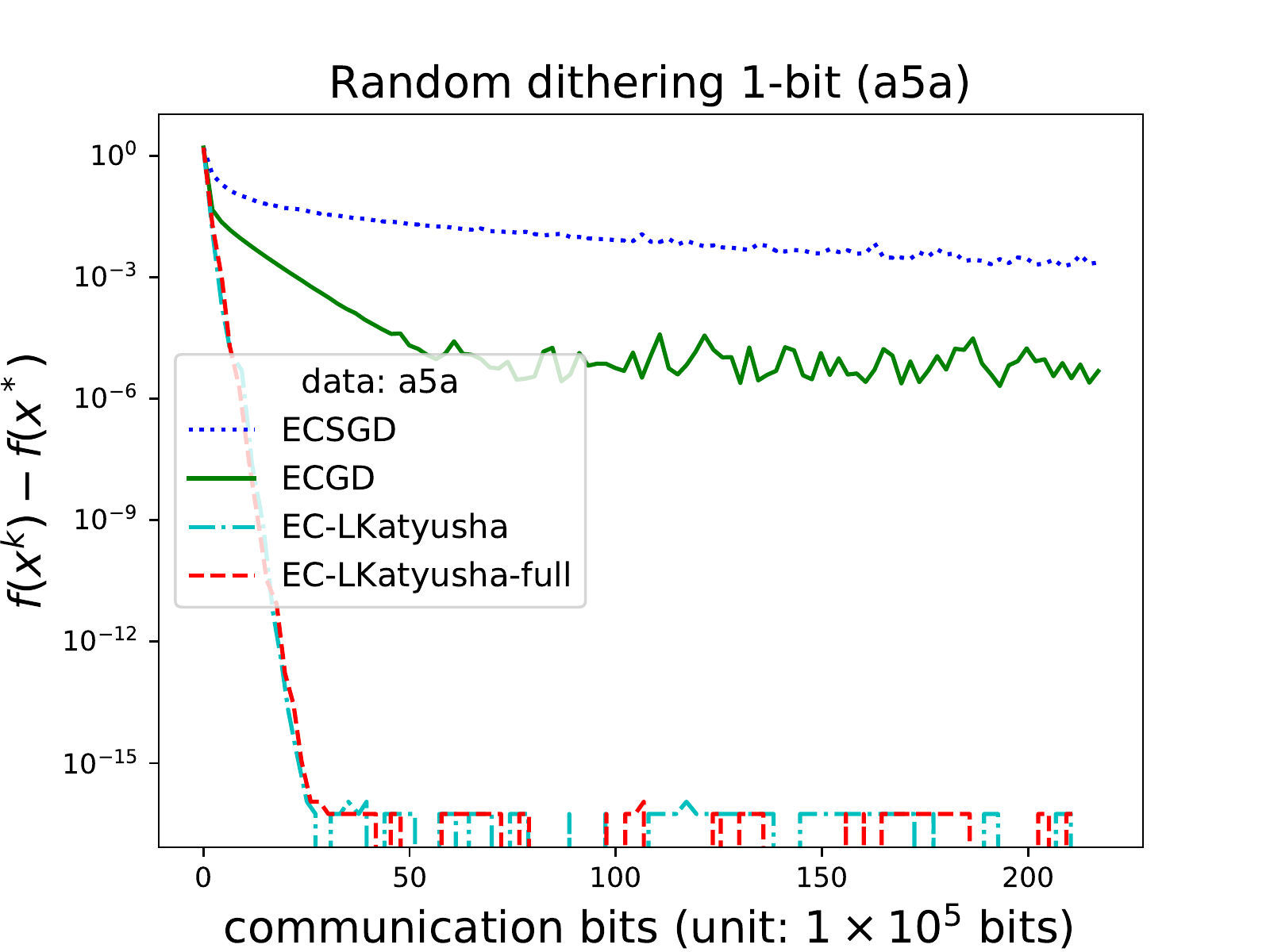}&
		\includegraphics[width=0.48\linewidth]{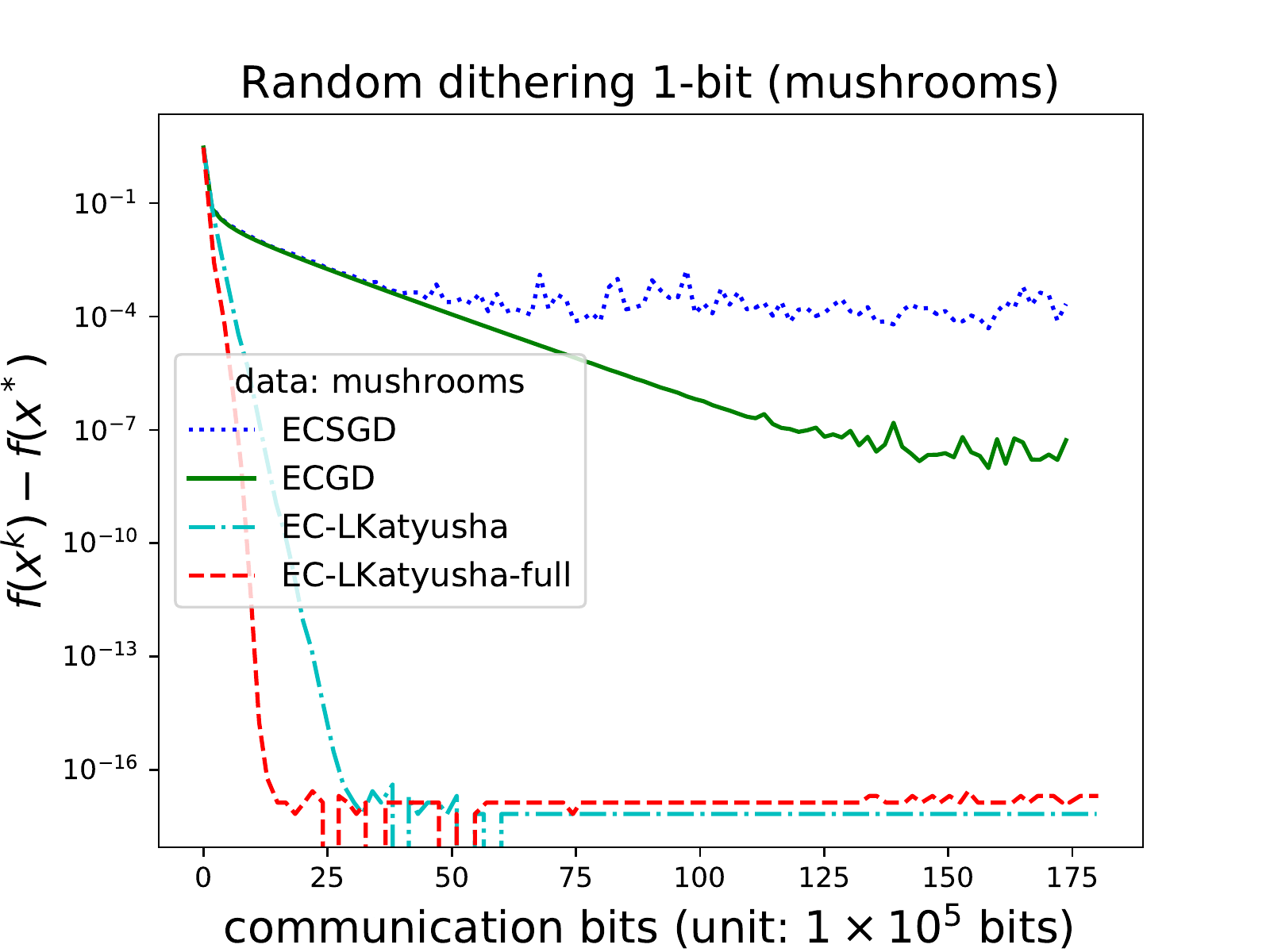}&
		
	\end{tabular}
	\vskip -0.2cm
	\caption{The communication complexity performance of ECSGD vs ECGD vs EC-LKatyusha vs EC-LKatyusha-full for Random dithering 1-bit on \texttt{a5a} and  \texttt{mushrooms} datasets.}
	\label{fig:randomcompare}
\end{figure*}

\begin{figure*}[t]
	\vspace{0cm}
	\centering
	\begin{tabular}{cccc}
		\includegraphics[width=0.48\linewidth]{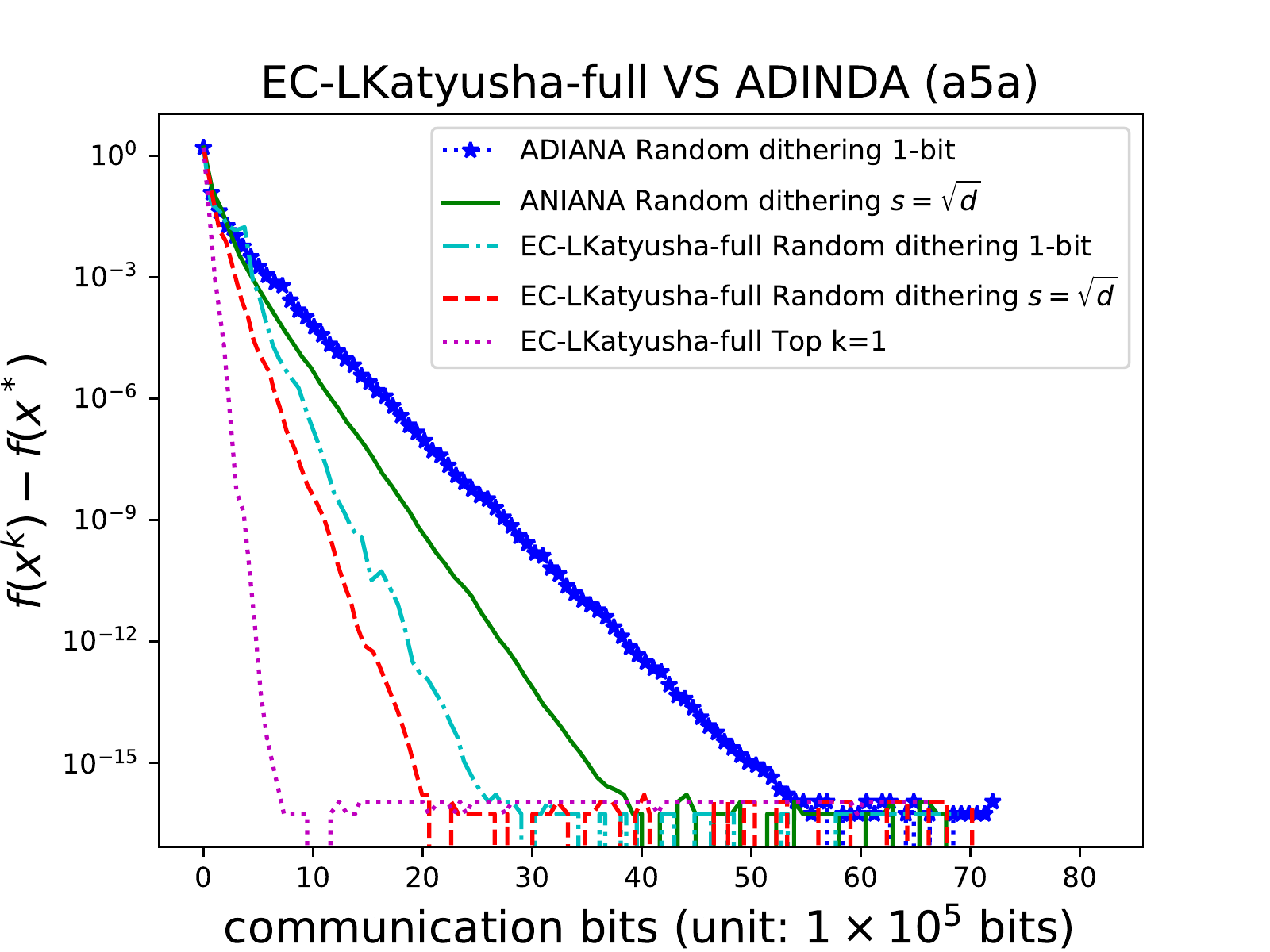}&
		\includegraphics[width=0.48\linewidth]{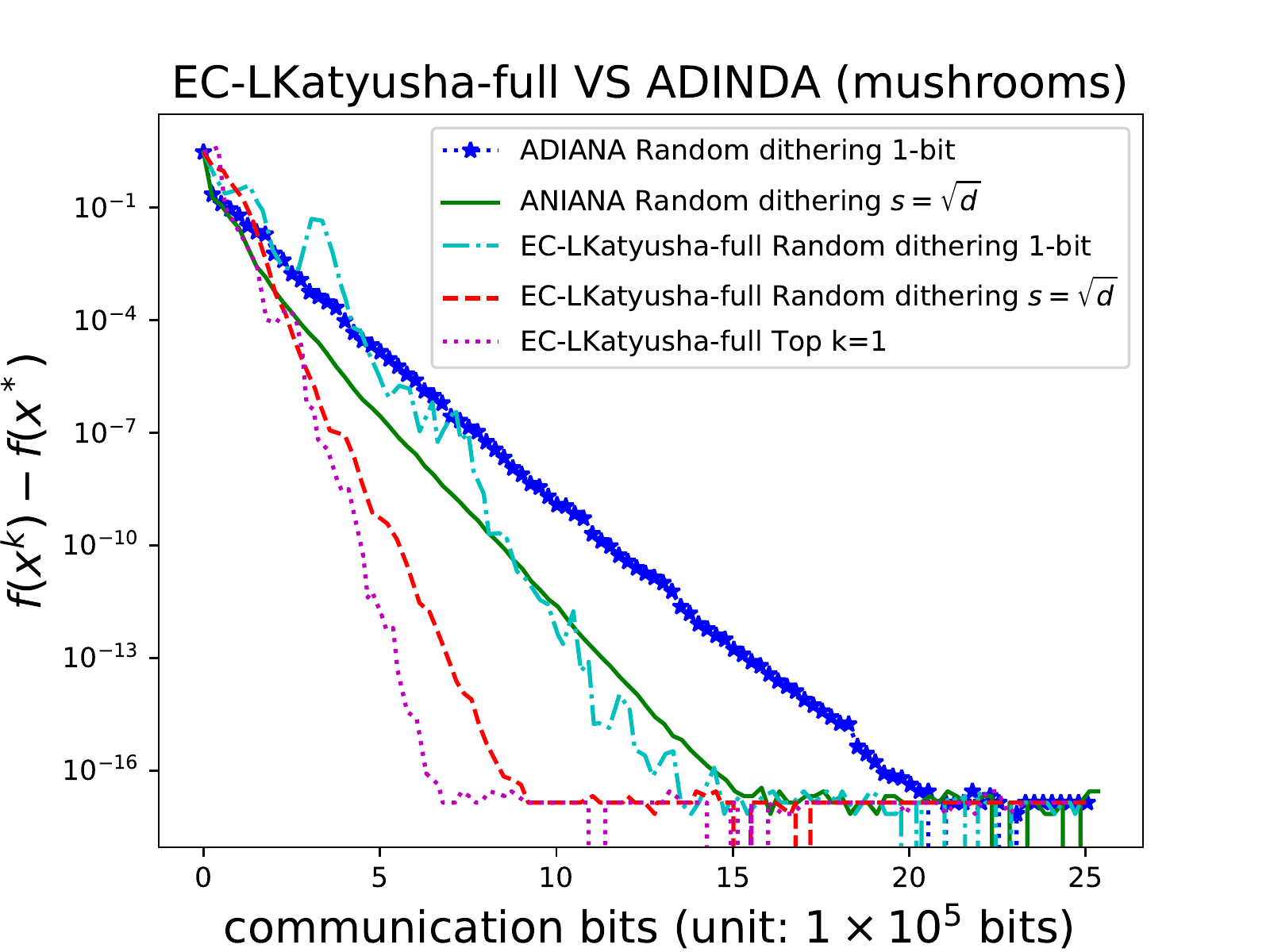}&
		
	\end{tabular}
	\vskip -0.2cm
	\caption{The communication complexity performance of EC-LKatyusha-full vs ADIANA on \texttt{a5a} and  \texttt{mushrooms} datasets.}
	\label{fig:compareANIANA}
\end{figure*}

\subsection{Influence of $p$} 

In this subsection, we show the influence of the parameter $p$ for the communication complexity of the error compensated L-Katyusha with TopK and random dithering compressors respectively. We choose $p = t \cdot r(Q)$ for $t\in \left\{  3, 1, \frac{1}{3}, \frac{1}{9}  \right\}$. Figure \ref{fig:p} shows that $p=r(Q)$ or $p=\frac{1}{3}r(Q)$ achieves the best performance, which coincides with our analysis in Section \ref{sec:com}.

\subsection{Comparison to ECSGD and ECGD}

In this subsection, we compare error compensated L-Katyusha (ECLK) with error compensated SGD (ECSGD) and error compensated GD (ECGD) for TopK compressor and random dithering compressor. ECGD is actually a special case of ECSGD with $m=1$, where the full gradient $\nabla f^{(\tau)}(x^k)$ is calculated on each node. Let ECLK-full be the special case of  ECLK with $m=1$, where the full gradient $\nabla f^{(\tau)}(x^k)$ is calculated on each node. For ECLK, we choose $p=r(Q)$. Figure~\ref{fig:topkcompare} and Figure~\ref{fig:randomcompare} show that ECSGD and ECGD can only converge to a neighborhood of the optimal solution, while ECLK and ECLK-full converge to the optimal solution, and at a linear rate.

\subsection{Comparison to ADIANA}

ADIANA \cite{li2020acceleration} is an accelerated method for any unbiased compressor where the full gradient is used on each node. In this subsection, we compare the EC-LKatyusha-full with ADIANA. For the unbiased compressor ${\tilde Q}$ for ADIANA, we use random dithering compressor with $s=2$ and $s = \sqrt{d}$. For the contraction compressor, we use TopK compressor with $K=1$ and $\frac{1}{\omega + 1}{\tilde Q}$ where ${\tilde Q}$ is the random dithering compressor with $s=2$ and $s = \sqrt{d}$. Figure \ref{fig:compareANIANA} shows that for the communication complexity, the EC-LKatyusha-full with Top1 compressor is the best. For the random dithering compressor with $s=2$ or $s = \sqrt{d}$, the communication complexity of EC-LKatyusha-full is also better than that of ADIANA.

\clearpage

	\bibliographystyle{plain}
	\bibliography{error_compensated_L_Katyusha.bib}

	\clearpage
	\appendix
    \footnotesize

	\part*{Appendix}

	\tableofcontents
	
	\clearpage

	\section{Lemmas}

	We bound the varaince of $g^k$ in the following lemma. 
	
	\begin{lemma}\label{lm:expL2}
		We have 
		\begin{eqnarray}
	 \mathbb{E}_k\left[  \|g^k + \nabla f(w^k) - \nabla f(x^k)\|^2  \right] 
		&\leq & \frac{2L}{n} \left(  f(w^k) - f(x^k) - \langle \nabla f(x^k), w^k - x^k \rangle  \right).  \label{eq:expL2}
		\end{eqnarray}
	\end{lemma}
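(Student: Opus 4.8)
The plan is to reduce the claim to the standard smoothness-and-convexity inequality applied to each component function $f_i^{(\tau)}$, after exploiting independence across nodes. First I would rewrite the vector whose squared norm we must bound as an average of centered, conditionally independent random vectors. Setting
$$X_\tau \eqdef \nabla f_{i_k^\tau}^{(\tau)}(x^k) - \nabla f_{i_k^\tau}^{(\tau)}(w^k) - \left( \nabla f^{(\tau)}(x^k) - \nabla f^{(\tau)}(w^k) \right), \qquad \tau = 1, \dots, n,$$
we have $g^k + \nabla f(w^k) - \nabla f(x^k) = \frac1n\sum_{\tau=1}^n X_\tau$, since $\nabla f(\cdot) = \frac1n\sum_\tau \nabla f^{(\tau)}(\cdot)$. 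Because $i_k^\tau$ is sampled uniformly from $[m]$, conditionally on iteration $k$ we have $\mathbb{E}_k[X_\tau] = 0$; and because the indices $i_k^1,\dots,i_k^n$ are drawn independently, the $X_\tau$ are conditionally independent. Hence the cross terms vanish and
$$\mathbb{E}_k\left\| \frac1n\sum_{\tau=1}^n X_\tau \right\|^2 = \frac{1}{n^2}\sum_{\tau=1}^n \mathbb{E}_k\|X_\tau\|^2 \le \frac{1}{n^2}\sum_{\tau=1}^n \mathbb{E}_k\left\| \nabla f_{i_k^\tau}^{(\tau)}(x^k) - \nabla f_{i_k^\tau}^{(\tau)}(w^k) \right\|^2,$$
using that a centered second moment is bounded by the raw second moment.

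Next I would bound each summand via the fact that for a convex and $L$-smooth function $\phi$ one has $\|\nabla\phi(y) - \nabla\phi(x)\|^2 \le 2L\left(\phi(y) - \phi(x) - \langle \nabla\phi(x), y - x\rangle\right)$. Applying this with $\phi = f_i^{(\tau)}$, which is $L$-smooth and convex by Assumption~\ref{as:eclkatyusha}, with $y = w^k$ and $x = x^k$, and then averaging over $i \in [m]$ gives
$$\mathbb{E}_k\left\| \nabla f_{i_k^\tau}^{(\tau)}(x^k) - \nabla f_{i_k^\tau}^{(\tau)}(w^k) \right\|^2 = \frac{1}{m}\sum_{i=1}^m \left\| \nabla f_i^{(\tau)}(w^k) - \nabla f_i^{(\tau)}(x^k) \right\|^2 \le 2L\left( f^{(\tau)}(w^k) - f^{(\tau)}(x^k) - \langle \nabla f^{(\tau)}(x^k), w^k - x^k\rangle \right),$$
where the last step uses $f^{(\tau)} = \frac1m\sum_i f_i^{(\tau)}$. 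Substituting into the previous display and using $\frac1n\sum_\tau f^{(\tau)} = f$ and $\frac1n\sum_\tau \nabla f^{(\tau)} = \nabla f$ yields exactly \eqref{eq:expL2}.

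The variance decomposition and the averaging are routine; the only genuine ingredient is the convexity-plus-smoothness inequality for $\phi$, which I expect to be the main thing to invoke carefully (it is the sharp form $\phi(y) \ge \phi(x) + \langle \nabla\phi(x), y-x\rangle + \frac{1}{2L}\|\nabla\phi(y)-\nabla\phi(x)\|^2$, not merely the descent lemma), but it is entirely standard, so no step here is truly an obstacle.
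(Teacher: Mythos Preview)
Your proposal is correct and follows essentially the same approach as the paper: both use conditional independence of the per-node terms to reduce to $\frac{1}{n^2}\sum_\tau \mathbb{E}_k\|g^k_\tau\|^2$, and then apply the convexity-plus-$L$-smoothness inequality $\|\nabla\phi(y)-\nabla\phi(x)\|^2 \le 2L(\phi(y)-\phi(x)-\langle\nabla\phi(x),y-x\rangle)$ to each $f_i^{(\tau)}$ before averaging. The only cosmetic difference is that you work directly with the centered variables $X_\tau$ and invoke ``variance $\le$ second moment'', whereas the paper expands $\mathbb{E}_k\|g^k\|^2$ explicitly, separates diagonal from off-diagonal terms, and then drops the resulting negative term $-\frac{1}{n^2}\sum_\tau\|\nabla f^{(\tau)}(x^k)-\nabla f^{(\tau)}(w^k)\|^2$; these are the same computation.
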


	\begin{proof}
	
	Since $f_i^{(\tau)}$ is $L$-smooth, we have 
	$$
	\|\nabla f_i^{(\tau)}(x) - \nabla f_i^{(\tau)}(y)\|^2 \leq 2L (f_i^{(\tau)}(x) - f_i^{(\tau)}(y) - \langle \nabla f_i^{(\tau)}(y), x-y\rangle ), 
	$$
	for any $x, y \in \R^d$. Therefore, 
	\begin{eqnarray*}
 \mathbb{E}_k \left[ \|g^k + \nabla f(w^k) - \nabla f(x^k)\|^2 \right] 
		&=& \mathbb{E}_k \|g^k\|^2 - \|\nabla f(x^k) - \nabla f(w^k)\|^2 \\ 
		&=& \mathbb{E}_k \left\| \frac{1}{n} \sum_{\tau=1}^n g^k_\tau \right\|^2 - \|\nabla f(x^k) - \nabla f(w^k)\|^2 \\
		&=& \frac{1}{n^2} \mathbb{E}_k \left \langle \sum_{\tau=1}^n g^k_{\tau},  \sum_{\tau=1}^n g^k_{\tau} \right \rangle - \|\nabla f(x^k) - \nabla f(w^k)\|^2 \\ 
		&=& \frac{1}{n^2} \sum_{\tau_1, \tau_2=1}^n \mathbb{E}_k \left\langle  g^k_{\tau_1}, g^k_{\tau_2} \right\rangle - \|\nabla f(x^k) - \nabla f(w^k)\|^2 \\ 
		&=& \frac{1}{n^2} \sum_{\tau=1}^n \mathbb{E}_k \|g^k_{\tau}\|^2  - \|\nabla f(x^k) - \nabla f(w^k)\|^2 \\
		&&  + \frac{1}{n^2} \sum_{\tau_1 \neq \tau_2} \left \langle \nabla f^{(\tau_1)}(x^k) - \nabla f^{(\tau_1)}(w^k),  \nabla f^{(\tau_2)}(x^k) - \nabla f^{(\tau_2)}(w^k) \right \rangle  \\ 
		&=& \frac{1}{n^2} \sum_{\tau=1}^n \mathbb{E}_k \|g^k_{\tau}\|^2  - \frac{1}{n^2} \sum_{\tau=1}^n \mathbb{E} \|\nabla f^{(\tau)}(x^k) - \nabla f^{(\tau)}(w^k) \|^2 \\ 
		&\leq&  \frac{1}{n^2} \sum_{\tau=1}^n \mathbb{E}_k \|g^k_{\tau}\|^2 \\ 
		&\leq& \frac{2L}{n^2} \sum_{\tau=1}^n \mathbb{E}_k  \left(  f^{(\tau)}_{i^\tau_k} (w^k) - f^{(\tau)}_{i^\tau_k}(x^k) - \langle \nabla f^{(\tau)}_{i^\tau_k} (x^k), w^k - x^k \rangle   \right) \\ 
		&=& \frac{2L}{n} \left(  f(w^k) - f(x^k) - \langle \nabla f(x^k), w^k-x^k  \right). 
	\end{eqnarray*}
	
	\end{proof}

	\begin{lemma}\label{lm:couple2eclkatyusha}
		If ${\cal L}_1 \geq L_f$, then we 
		have 
		\begin{equation}
		 \frac{{\cal L}_1}{4\eta} \|z^{k+1} - z^k\|^2 + \langle g^k + \nabla f(w^k), z^{k+1} - z^k \rangle
		\geq  \frac{1}{\theta_1} \left(  f(y^{k+1}) - f(x^k)  \right)  - \frac{1}{{\cal L}_1 \theta_1} \|g^k + \nabla f(w^k) - \nabla f(x^k)\|^2.   \label{eq:couple2}
		\end{equation}
	\end{lemma}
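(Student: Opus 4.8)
The plan is to establish Lemma~\ref{lm:couple2eclkatyusha} by combining the defining optimality of $z^{k+1}$ as a proximal step with the $L_f$-smoothness of $f$ and the update rule $y^{k+1} = x^k + \theta_1(z^{k+1}-z^k)$. First I would write down the explicit form of $z^{k+1}$ from line~17 of Algorithm~\ref{alg:ec-lkatyusha}; since $\psi$ is involved, the cleanest route is to ignore the proximal term initially and instead use the three-point identity for the (strongly convex) proximal subproblem, or simply treat the case $\psi \equiv 0$ reasoning and note that the $\psi$ contribution only helps. Actually, the statement as written involves only $f$ (not $P$), so I expect the proof to not need $\psi$ at all here: the search direction with respect to $f$ is $g^k + \nabla f(w^k)$, and the claim is a purely ``smoothness + descent lemma'' estimate. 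The key identity to invoke is $y^{k+1} - x^k = \theta_1(z^{k+1}-z^k)$, so that $\langle g^k + \nabla f(w^k), z^{k+1}-z^k\rangle = \frac{1}{\theta_1}\langle g^k + \nabla f(w^k), y^{k+1}-x^k\rangle$ and $\|z^{k+1}-z^k\|^2 = \frac{1}{\theta_1^2}\|y^{k+1}-x^k\|^2$.

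Next I would apply $L_f$-smoothness of $f$ in the form
\begin{equation*}
f(y^{k+1}) \leq f(x^k) + \langle \nabla f(x^k), y^{k+1}-x^k\rangle + \frac{L_f}{2}\|y^{k+1}-x^k\|^2.
\end{equation*}
Rearranging gives $\langle \nabla f(x^k), y^{k+1}-x^k\rangle \geq f(y^{k+1}) - f(x^k) - \frac{L_f}{2}\|y^{k+1}-x^k\|^2$. The remaining gap is that the inner product on the left-hand side of \eqref{eq:couple2} features $g^k + \nabla f(w^k)$ rather than $\nabla f(x^k)$; I would write $g^k + \nabla f(w^k) = \nabla f(x^k) + \bigl(g^k + \nabla f(w^k) - \nabla f(x^k)\bigr)$ and bound the cross term $\langle g^k + \nabla f(w^k) - \nabla f(x^k), y^{k+1}-x^k\rangle$ using Young's inequality $\langle a,b\rangle \geq -\frac{1}{2c}\|a\|^2 - \frac{c}{2}\|b\|^2$ with $c$ chosen so that the $\|y^{k+1}-x^k\|^2$ terms are absorbed by the $\frac{{\cal L}_1}{4\eta}\|z^{k+1}-z^k\|^2 = \frac{{\cal L}_1}{4\eta\theta_1^2}\|y^{k+1}-x^k\|^2$ term together with the $\frac{L_f}{2}$ from smoothness. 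Using $\eta = \frac{1}{3\theta_1}$ and ${\cal L}_1 \geq L_f$, one checks $\frac{{\cal L}_1}{4\eta\theta_1^2} = \frac{3{\cal L}_1}{4\theta_1} \geq \frac{L_f}{2\theta_1} + (\text{slack})$, and the slack plus the choice $c = {\cal L}_1\theta_1$ (say) yields exactly the coefficient $\frac{1}{{\cal L}_1\theta_1}$ on $\|g^k + \nabla f(w^k) - \nabla f(x^k)\|^2$ in the claimed bound.

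The main obstacle I anticipate is bookkeeping the constants so that the coefficient on the residual term comes out to precisely $\frac{1}{{\cal L}_1\theta_1}$ and the quadratic-in-$\|y^{k+1}-x^k\|^2$ terms net out nonnegative (so they can be dropped). This requires using the relation $\eta = \frac{1}{3\theta_1}$ explicitly — note the factor-of-$3$ slack in $3\theta_1$ versus the $\frac{1}{2}$ from the descent lemma is exactly what makes the Young's inequality splitting work with room to spare. A secondary subtlety is whether the proximal/$\psi$ part of the $z^{k+1}$ update interferes; since \eqref{eq:couple2} only bounds quantities involving $f$ and has no $\psi$ term, and since $z^{k+1}$ appears only through $z^{k+1}-z^k$ which is substituted via the $y$-update, I expect $\psi$ to play no role in this particular lemma — it is a consequence of the $x,y,z$ relations plus smoothness of $f$, independent of how $z^{k+1}$ was generated. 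I would double-check this by confirming that nowhere in the derivation do I need to expand $z^{k+1}$ in terms of the proximal operator; if the statement secretly does need the $z^{k+1}$ update (e.g.\ to relate $z^{k+1}-z^k$ to $g^k+\nabla f(w^k)$), then I would instead start from the first-order optimality condition of the prox subproblem and carry the (harmless, sign-favorable) $\psi$ terms through.
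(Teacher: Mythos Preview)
Your proposal is correct and follows essentially the same approach as the paper: substitute $z^{k+1}-z^k=\tfrac{1}{\theta_1}(y^{k+1}-x^k)$, use $\eta=\tfrac{1}{3\theta_1}$ to rewrite $\tfrac{{\cal L}_1}{4\eta\theta_1^2}=\tfrac{3{\cal L}_1}{4\theta_1}$, split $g^k+\nabla f(w^k)=\nabla f(x^k)+\text{(noise)}$, apply $L_f$-smoothness, and absorb the cross term by Young's inequality using the slack $\tfrac{3{\cal L}_1}{4\theta_1}-\tfrac{L_f}{2\theta_1}\geq \tfrac{{\cal L}_1}{4\theta_1}$. Your instinct that $\psi$ plays no role here is correct; the paper's proof never touches the proximal step. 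One minor bookkeeping note: the Young parameter that makes the constants match exactly is $c=\tfrac{{\cal L}_1}{2}$ (not ${\cal L}_1\theta_1$), yielding $-\tfrac{1}{{\cal L}_1\theta_1}\|a\|^2-\tfrac{{\cal L}_1}{4\theta_1}\|b\|^2$, which is precisely absorbed by the slack.
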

	
	\begin{proof}
	
	Since $z^{k+1} - z^k = \frac{1}{\theta_1}(y^{k+1} - x^k)$, we have 
	\begin{eqnarray*}
	 \frac{{\cal L}_1}{4\eta} \|z^{k+1} - z^k\|^2 + \langle g^k + \nabla f(w^k), z^{k+1} - z^k \rangle
		&=& \frac{{\cal L}_1}{4\eta \theta_1^2} \|y^{k+1} - x^k\|^2 +  \frac{1}{\theta_1} \langle g^k + \nabla f(w^k), y^{k+1} - x^k \rangle \\ 
		&=& \frac{1}{\theta_1} \langle \nabla f(x^k), y^{k+1} - x^k \rangle + \frac{3{\cal L}_1}{4 \theta_1} \|y^{k+1} - x^k\|^2 \\ 
		&& +  \frac{1}{\theta_1} \langle g^k + \nabla f(w^k) - \nabla f(x^k), y^{k+1} - x^k \rangle \\ 
		&\geq& \frac{1}{\theta_1} \left(  f(y^{k+1}) - f(x^k)  \right) + \left(  \frac{3{\cal L}_1}{4 \theta_1} - \frac{L_f}{2 \theta_1} \right) \|y^{k+1} - x^k\|^2 \\ 
		&& + \frac{1}{\theta_1} \langle g^k + \nabla f(w^k) - \nabla f(x^k), y^{k+1} - x^k \rangle \\ 
		&\geq& \frac{1}{\theta_1} \left(  f(y^{k+1}) - f(x^k)  \right) + \frac{{\cal L}_1}{4 \theta_1} \|y^{k+1} - x^k\|^2 \\ 
		&& + \frac{1}{\theta_1} \langle g^k + \nabla f(w^k) - \nabla f(x^k), y^{k+1} - x^k \rangle \\ 
		&\geq&  \frac{1}{\theta_1} \left(  f(y^{k+1}) - f(x^k)  \right) - \frac{1}{{\cal L}_1 \theta_1} \|g^k + \nabla f(w^k) - \nabla f(x^k)\|^2, 
	\end{eqnarray*}
	where the first inequality comes from $L_f$-smoothness of $f$, and the last inequality comes from Young's inequality. 
	
	\end{proof}

	\begin{lemma}\label{lm:couple1eclkatyusha} 
		We have 
		\begin{eqnarray}
		 \langle g^k + \nabla f(w^k), x^*-z^{k+1} \rangle + \frac{\mu_f}{2} \|x^k -x^*\|^2  
		&\geq&  \frac{{\cal L}_1}{4\eta} \|z^k - z^{k+1}\|^2 + {\tilde {\cal Z}}^{k+1} - \frac{{\cal L}_1 {\tilde {\cal Z}}^k}{{\cal L}_1 + \eta\mu/2}  - \left(   \frac{{\cal L}_1}{2\eta} + \frac{\mu_f}{2}  \right) \|e^k\|^2  \nonumber  \\ 
		&& - \left(   \frac{{\cal L}_1}{2\eta} + \frac{\mu}{2}  \right) \|e^{k+1}\|^2  + \psi(z^{k+1}) - \psi(x^*).  \label{eq:couple1}
		\end{eqnarray}
	\end{lemma}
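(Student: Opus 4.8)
The plan is to emulate the classical L-Katyusha coupling argument, but carried out on the perturbed iterates $\tilde z^k$ rather than on $z^k$ directly, using the identity \eqref{eq:tildezk+1} for $\tilde z^{k+1}$. The starting point is the optimality condition for the proximal step defining $z^{k+1}$: since $z^{k+1} = \prox_{\frac{\eta}{(1+\eta\sigma_1){\cal L}_1}\psi}(\cdots)$, there is a subgradient $\xi^{k+1}\in\partial\psi(z^{k+1})$ such that
\[
z^{k+1} = \frac{1}{1+\eta\sigma_1}\Bigl(\eta\sigma_1 x^k + z^k - \tilde g^k - \tfrac{\eta}{{\cal L}_1}\nabla f(w^k)\Bigr) - \frac{\eta\,\xi^{k+1}}{(1+\eta\sigma_1){\cal L}_1}.
\]
First I would rewrite this, exactly as in \eqref{eq:tildezk+1}, in terms of $\tilde x^k,\tilde z^k,\tilde z^{k+1}$, so that the compressed quantity $\tilde g^k$ is replaced by $\frac{\eta}{{\cal L}_1} g^k$ and the error vectors $e^k,e^{k+1}$ are absorbed into the tilded iterates. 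This gives a clean three-point relation of the form ${\cal L}_1(\tilde z^{k+1}-\tilde z^k) + {\cal L}_1\eta\sigma_1(\tilde z^{k+1}-\tilde x^k) = -\eta(g^k+\nabla f(w^k)) - \eta\,\xi^{k+1}$, i.e., $g^k+\nabla f(w^k) = \frac{{\cal L}_1}{\eta}(\tilde z^k-\tilde z^{k+1}) + {\cal L}_1\sigma_1(\tilde x^k - \tilde z^{k+1}) - \xi^{k+1}$.

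Next I would take the inner product of this expression with $x^* - \tilde z^{k+1}$ and expand each term. The $\frac{{\cal L}_1}{\eta}(\tilde z^k - \tilde z^{k+1})$ term yields, via the standard three-point (law of cosines) identity, $\frac{{\cal L}_1}{2\eta}(\|\tilde z^k - x^*\|^2 - \|\tilde z^{k+1}-x^*\|^2 - \|\tilde z^k-\tilde z^{k+1}\|^2)$; the $\sigma_1$ term contributes a piece that combines with the strong-convexity slack to build the $\eta\mu/2$ factor inside $\tilde{\cal Z}^{k+1}$; and the $-\xi^{k+1}$ term, by convexity of $\psi$, gives $\langle\xi^{k+1}, \tilde z^{k+1}-x^*\rangle \geq \psi(\tilde z^{k+1}) - \psi(x^*)$, which after accounting for the $\|e^{k+1}\|$ gap between $\tilde z^{k+1}$ and $z^{k+1}$ (using $L_\psi$-type control, or simply convexity and a Young step) produces $\psi(z^{k+1}) - \psi(x^*)$ up to an error term proportional to $\|e^{k+1}\|^2$. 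Then I would convert the inner product against $x^* - \tilde z^{k+1}$ back to the desired inner product against $x^* - z^{k+1}$ and convert $\|\tilde z^k - x^*\|^2$ into ${\tilde{\cal Z}}^k$ and $\|\tilde z^{k+1}-x^*\|^2$ into ${\tilde{\cal Z}}^{k+1}$; each such substitution costs an $\|e^k\|^2$ or $\|e^{k+1}\|^2$ term (by $\|a+b\|^2 \le (1+c)\|a\|^2 + (1+1/c)\|b\|^2$ with $c$ chosen so the leading coefficient matches ${\cal L}_1/(\,{\cal L}_1+\eta\mu/2)$), which is precisely why the bound carries the coefficients $\frac{{\cal L}_1}{2\eta}+\frac{\mu_f}{2}$ on $\|e^k\|^2$ and $\frac{{\cal L}_1}{2\eta}+\frac{\mu}{2}$ on $\|e^{k+1}\|^2$. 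Finally, the leftover $-\|\tilde z^k-\tilde z^{k+1}\|^2$ term gets matched to $-\frac{{\cal L}_1}{4\eta}\|z^k-z^{k+1}\|^2$ on the other side (again paying an error term), and adding $\frac{\mu_f}{2}\|x^k-x^*\|^2$ on the left supplies the strong-convexity slack needed to upgrade the $\sigma_1$-contribution to a full $\eta\mu/2$ inside ${\tilde{\cal Z}}^{k+1}$ (recall $\sigma_1 = \mu_f/(2{\cal L}_1)$ and $\mu = \mu_f + \mu_\psi$, so the $\mu_\psi$ part of $\mu$ comes from strong convexity of $\psi$ evaluated at $z^{k+1}$ vs.\ $x^*$).

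The main obstacle I anticipate is bookkeeping the error terms consistently: every time one passes between a tilded iterate and its untilded counterpart — in the three-point identity, in the $\psi$-convexity step, in converting $\|z^{k+1}-z^k\|$ to $\|\tilde z^{k+1}-\tilde z^k\|$, and in identifying $\tilde{\cal Z}^k,\tilde{\cal Z}^{k+1}$ — one picks up a multiple of $\|e^k\|^2$ or $\|e^{k+1}\|^2$, and these must be collected so that the final coefficients are exactly $\frac{{\cal L}_1}{2\eta}+\frac{\mu_f}{2}$ and $\frac{{\cal L}_1}{2\eta}+\frac{\mu}{2}$ and not anything larger. Getting the splitting constants in the Young/weighted-triangle inequalities to line up with the $\frac{{\cal L}_1}{{\cal L}_1+\eta\mu/2}$ prefactor of ${\tilde{\cal Z}}^k$ is the delicate part; the condition ${\cal L}_1 \ge L_f$ is what guarantees enough room in the quadratic terms for this to close, mirroring its role in Lemma~\ref{lm:couple2eclkatyusha}. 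Everything else is the routine L-Katyusha estimate transplanted through the substitution \eqref{eq:tildezk+1}.
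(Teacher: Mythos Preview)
Your overall architecture---rewrite the prox step via \eqref{eq:tildezk+1}, then run the three-point identity on the tilded iterates---is the right idea and matches the paper. But there is a genuine gap in the way you handle the subgradient term, and it does not close without an assumption the paper never makes.

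You propose to take the inner product of $g^k+\nabla f(w^k)=\frac{{\cal L}_1}{\eta}(\tilde z^k-\tilde z^{k+1})+{\cal L}_1\sigma_1(\tilde x^k-\tilde z^{k+1})-\xi^{k+1}$ with $x^*-\tilde z^{k+1}$, and then claim that convexity of $\psi$ gives $\langle\xi^{k+1},\tilde z^{k+1}-x^*\rangle\ge\psi(\tilde z^{k+1})-\psi(x^*)$. That inequality is false: $\xi^{k+1}\in\partial\psi(z^{k+1})$, not $\partial\psi(\tilde z^{k+1})$, so the subgradient inequality only lets you anchor at $z^{k+1}$. Decomposing $\langle\xi^{k+1},\tilde z^{k+1}-x^*\rangle=\langle\xi^{k+1},z^{k+1}-x^*\rangle+\langle\xi^{k+1},\tilde z^{k+1}-z^{k+1}\rangle$ leaves you with a cross term proportional to $\langle\xi^{k+1},e^{k+1}\rangle$, and since $\psi$ is merely proper closed convex (possibly an indicator function), $\|\xi^{k+1}\|$ is not controlled; your suggested ``$L_\psi$-type control, or simply convexity and a Young step'' does not exist here. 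Likewise, converting $\psi(\tilde z^{k+1})$ back to $\psi(z^{k+1})$ would again need Lipschitzness of $\psi$.

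The paper avoids this by taking the inner product with $z^{k+1}-x^*$ (the \emph{untilded} point) from the start. Then $\mu_\psi$-strong convexity of $\psi$ applies cleanly and produces $\psi(x^*)-\psi(z^{k+1})-\frac{\mu_\psi}{2}\|z^{k+1}-x^*\|^2$ with no subgradient--error cross term. Only the two remaining pieces, $\frac{{\cal L}_1}{\eta}\langle z^{k+1}-x^*,\tilde z^k-\tilde z^{k+1}\rangle$ and $\frac{\mu_f}{2}\langle z^{k+1}-x^*,\tilde x^k-\tilde z^{k+1}\rangle$, are split as $\langle\tilde z^{k+1}-x^*,\cdot\rangle+\langle z^{k+1}-\tilde z^{k+1},\cdot\rangle$ and handled by two applications of the polarization identity; the extra $\|e^k\|^2$ and $\|e^{k+1}\|^2$ terms fall out of those identities with exactly the stated coefficients. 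Two smaller corrections: the factor $\frac{{\cal L}_1}{{\cal L}_1+\eta\mu/2}$ in front of $\tilde{\cal Z}^k$ is not produced by a tuned Young inequality but simply by the definition $\tilde{\cal Z}^k=\frac{{\cal L}_1+\eta\mu/2}{2\eta}\|\tilde z^k-x^*\|^2$; and the hypothesis ${\cal L}_1\ge L_f$ plays no role in this lemma (it is used only in Lemma~\ref{lm:couple2eclkatyusha}).
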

	
	\begin{proof}
	
	First, from (\ref{eq:tildezk+1}) and $\sigma_1 = \frac{\mu_f}{2{\cal L}_1}$, we have 
	\begin{eqnarray*}
		g^k + \nabla f(w^k) &=& \frac{{\cal L}_1}{\eta} ({\tilde z}^k - {\tilde z}^{k+1}) + {\cal L}_1 \sigma_1 ({\tilde x}^k - {\tilde z}^{k+1}) - \partial \psi(z^{k+1}) \\ 
		&=&  \frac{{\cal L}_1}{\eta} ({\tilde z}^k - {\tilde z}^{k+1}) + \frac{\mu_f}{2} ({\tilde x}^k - {\tilde z}^{k+1}) - \partial \psi(z^{k+1}), 
	\end{eqnarray*}
	which implies that 
	\begin{eqnarray*}
		 \langle g^k + \nabla f(w^k),  z^{k+1} - x^* \rangle 
		&=& \frac{\mu_f}{2} \langle z^{k+1} - x^*, {\tilde x}^k - {\tilde z}^{k+1} \rangle + \frac{{\cal L}_1}{\eta} \langle z^{k+1} - x^*, {\tilde z}^k - {\tilde z}^{k+1} \rangle \\ 
		&& - \langle z^{k+1} - x^*, \partial \psi(z^{k+1}) \rangle \\ 
		&\leq& \frac{\mu_f}{2} \langle z^{k+1} - x^*, {\tilde x}^k - {\tilde z}^{k+1} \rangle + \frac{{\cal L}_1}{\eta} \langle z^{k+1} - x^*, {\tilde z}^k - {\tilde z}^{k+1} \rangle \\ 
		&& + \psi(x^*) - \psi(z^{k+1}) - \frac{\mu_{\psi}}{2} \|z^{k+1} - x^*\|^2 \\ 
		&=& \frac{\mu_f}{2} \langle {\tilde z}^{k+1} - x^*, {\tilde x}^k - {\tilde z}^{k+1} \rangle + \frac{{\cal L}_1}{\eta} \langle {\tilde z}^{k+1} - x^*, {\tilde z}^k - {\tilde z}^{k+1} \rangle \\ 
		&& + \psi(x^*) - \psi(z^{k+1}) - \frac{\mu_{\psi}}{2} \|z^{k+1} - x^*\|^2 \\ 
		&& + \frac{\mu_f}{2} \langle z^{k+1} - {\tilde z}^{k+1}, {\tilde x}^k - {\tilde z}^{k+1} \rangle + \frac{{\cal L}_1}{\eta} \langle z^{k+1} - {\tilde z}^{k+1}, {\tilde z}^k - {\tilde z}^{k+1} \rangle \\ 
		&=& \frac{\mu_f}{4} \left(  \|{\tilde x}^k - x^*\|^2 - \|{\tilde z}^{k+1} - x^*\|^2 - \|{\tilde x}^k - {\tilde z}^{k+1}\|^2  \right) \\ 
		&& + \frac{{\cal L}_1}{2\eta} \left(  \|{\tilde z}^k - x^*\|^2 - \|{\tilde z}^{k+1} - x^*\|^2 - \|{\tilde z}^k - {\tilde z}^{k+1}\|^2  \right) \\ 
		&& + \frac{\mu_f}{4} \left(  \|z^{k+1} - {\tilde z}^{k+1} \|^2 + \|{\tilde x}^k - {\tilde z}^{k+1}\|^2 - \|{\tilde x}^k - z^{k+1}\|^2  \right) \\ 
		&&  + \frac{{\cal L}_1}{2\eta} \left(  \|z^{k+1} - {\tilde z}^{k+1}\|^2 + \|{\tilde z}^k - {\tilde z}^{k+1}\|^2 - \|{\tilde z}^k - z^{k+1}\|^2  \right) \\ 
		&& + \psi(x^*) - \psi(z^{k+1}) - \frac{\mu_{\psi}}{2} \|z^{k+1} - x^*\|^2 \\ 
		&\leq& - \left(  \frac{{\cal L}_1}{2 \eta} + \frac{\mu_f}{4} \right) \| {\tilde z}^{k+1} - x^*\|^2 + \frac{{\cal L}_1}{2\eta} \|{\tilde z}^k - x^*\|^2 + \frac{\mu_f}{4} \|{\tilde x}^k - x^*\|^2 \\ 
		&& + \left(  \frac{{\cal L}_1}{2 \eta} + \frac{\mu_f}{4} \right) \|z^{k+1} - {\tilde z}^{k+1}\|^2 - \frac{{\cal L}_1}{2\eta} \|{\tilde z}^k - z^{k+1}\|^2 \\ 
		&& + \psi(x^*) - \psi(z^{k+1}) - \frac{\mu_{\psi}}{2} \|z^{k+1} - x^*\|^2. 
	\end{eqnarray*}
	
	For $\|{\tilde x}^k - x^*\|^2$, $\|{\tilde z}^k - z^{k+1}\|^2$, and $\|z^{k+1} - x^*\|^2$, from Young's inequality, we have 
	$$
	\|{\tilde x}^k - x^*\|^2 \leq 2\|{\tilde x}^k - x^k\|^2 + 2\|x^k - x^*\|^2, \qquad 	\|{\tilde z}^k - z^{k+1}\|^2 \geq \frac{1}{2} \|z^k - z^{k+1}\|^2 - \|z^k - {\tilde z}^k\|^2, 
	$$	
	and
	$$
	\|z^{k+1} - x^*\|^2 \geq \frac{1}{2}\|{\tilde z}^{k+1} - x^*\|^2 - \|z^{k+1} - {\tilde z}^{k+1}\|^2. 
	$$
	
	Hence, we arrive at 
	\begin{eqnarray*}
	 \langle g^k + \nabla f(w^k),  z^{k+1} - x^* \rangle 
		&\leq& - \left(  \frac{{\cal L}_1}{2 \eta} + \frac{\mu_f}{4} + \frac{\mu_{\psi}}{4} \right) \| {\tilde z}^{k+1} - x^*\|^2 + \frac{{\cal L}_1}{2\eta} \|{\tilde z}^k - x^*\|^2 + \frac{\mu_f}{2} \|x^k - x^*\|^2 \\ 
		&& + \frac{\mu_f}{2} \|{\tilde x}^k - x^k\|^2 + \left(  \frac{{\cal L}_1}{2 \eta} + \frac{\mu_f}{4} + \frac{\mu_\psi}{2} \right) \|z^{k+1} - {\tilde z}^{k+1}\|^2 \\ 
		&& - \frac{{\cal L}_1}{4\eta} \|z^k - z^{k+1}\|^2 + \frac{{\cal L}_1}{2\eta} \|z^k - {\tilde z}^k\|^2 + \psi(x^*) - \psi(z^{k+1}) \\ 
		&=&  - \left(  \frac{{\cal L}_1}{2 \eta} + \frac{\mu}{4}  \right) \| {\tilde z}^{k+1} - x^*\|^2 + \frac{{\cal L}_1}{2\eta} \|{\tilde z}^k - x^*\|^2 + \frac{\mu_f}{2} \|x^k - x^*\|^2 \\ 
		&& + \left(   \frac{{\cal L}_1}{2\eta} + \frac{\mu_f}{2}  \right) \|e^k\|^2 + \left(   \frac{{\cal L}_1}{2\eta}  + \frac{\mu}{2}  \right) \|e^{k+1}\|^2 \\ 
		&& - \frac{{\cal L}_1}{4\eta} \|z^k - z^{k+1}\|^2 + \psi(x^*) - \psi(z^{k+1})\\ 
		&=& - {\tilde {\cal Z}}^{k+1} + \frac{{\cal L}_1 {\tilde {\cal Z}}^k}{{\cal L}_1 + \eta\mu/2} + \frac{\mu_f}{2} \|x^k - x^*\|^2 + \left(   \frac{{\cal L}_1}{2\eta} + \frac{\mu_f}{2}  \right) \|e^k\|^2 \\ 
		&& + \left(   \frac{{\cal L}_1}{2\eta} + \frac{\mu}{2}  \right) \|e^{k+1}\|^2 - \frac{{\cal L}_1}{4\eta} \|z^k - z^{k+1}\|^2 + \psi(x^*) - \psi(z^{k+1}). 
	\end{eqnarray*}
	
	\end{proof}

	\section{Proofs of Lemmas \ref{lm:zyk+1}, \ref{lm:ek+1}, and \ref{lm:ek+1-2} }
	
	\subsection{Proof of Lemma \ref{lm:zyk+1}}
	
	Since $\theta_1 + \theta_2 \leq 1$, and $f$ is $\mu_f$-strong convex, we have 
	\begin{eqnarray*}
		f(x^*) &{\geq}& f(x^k) + \langle \nabla f(x^k), x^* - x^k \rangle + \frac{\mu_f}{2}\|x^k - x^*\|^2 \\ 
		&=& f(x^k) + \frac{\mu_f}{2}\|x^k-x^*\|^2 + \langle \nabla f(x^k), x^*-z^k + z^k -x^k \rangle \\ 
		&=& f(x^k) + \frac{\mu_f}{2}\|x^k -x^*\|^2 + \langle \nabla f(x^k), x^*-z^k \rangle + \frac{\theta_2}{\theta_1}\langle \nabla f(x^k), x^k-w^k \rangle  + \frac{1-\theta_1-\theta_2}{\theta_1} \langle \nabla f(x^k), x^k-y^k \rangle \\ 
		&{\geq}& f(x^k) + \frac{\theta_2}{\theta_1}\langle \nabla f(x^k), x^k-w^k \rangle + \frac{1-\theta_1-\theta_2}{\theta_1} (f(x^k) - f(y^k)) \\ 
		&& + \mathbb{E}_k \left[ \frac{\mu_f }{2}\|x^k - x^*\|^2 + \langle g^k + \nabla f(w^k), x^* - z^{k+1} \rangle + \langle g^k + \nabla f(w^k), z^{k+1}-z^k \rangle \right],
	\end{eqnarray*}
	where the last inequality follows from the convexity of $f$ and $\mathbb{E}_k[g^k + \nabla f(w^k)] = \nabla f(x^k)$. For the last term in the above inequality,   we have 
	\begin{eqnarray*}
	&&	 \mathbb{E}_k \left[ \frac{\mu_f}{2}\|x^k - x^*\|^2 + \langle g^k + \nabla f(w^k), x^* - z^{k+1} \rangle   + \langle g^k + \nabla f(w^k), z^{k+1}-z^k\rangle  - \psi(z^{k+1}) + \psi(x^*) - {\tilde {\cal Z}}^{k+1}\right]   \\  
		&\overset{(\ref{eq:couple1})}{\geq}& - \frac{{\cal L}_1{\tilde {\cal Z}}^k}{{\cal L}_1 + \eta\mu/2} + \mathbb{E}_k\left[ \langle g^k+ \nabla f(w^k), z^{k+1} -z^k\rangle + \frac{{\cal L}_1}{4\eta}\|z^k - z^{k+1}\|^2 \right] \\ 
		&& - \left(   \frac{{\cal L}_1}{2\eta} + \frac{\mu_f}{2}  \right)  \|e^k\|^2  - \left(   \frac{{\cal L}_1}{2\eta} + \frac{\mu}{2}  \right) \mathbb{E}_k \|e^{k+1}\|^2 \\ 
		&\overset{(\ref{eq:couple2})}{\geq}& - \frac{{\cal L}_1{\tilde {\cal Z}}^k}{{\cal L}_1 + \eta\mu/2} - \left(   \frac{{\cal L}_1}{2\eta} + \frac{\mu_f}{2}  \right)  \|e^k\|^2  - \left(   \frac{{\cal L}_1}{2\eta} + \frac{\mu}{2}  \right) \mathbb{E}_k \|e^{k+1}\|^2 \\ 
		&&  + \mathbb{E}_k \left[ \frac{1}{\theta_1}(f(y^{k+1}) - f(x^k)) - \frac{1}{{\cal L}_1\theta_1}\|g^k + \nabla f(w^k) - \nabla f(x^k)\|^2 \right]  \\ 
		&\overset{(\ref{eq:expL2})}{\geq}& - \frac{{\cal L}_1{\tilde {\cal Z}}^k}{{\cal L}_1 + \eta\mu/2} - \left(   \frac{{\cal L}_1}{2\eta} + \frac{\mu_f}{2}  \right)  \|e^k\|^2  - \left(   \frac{{\cal L}_1}{2\eta} + \frac{\mu}{2}  \right) \mathbb{E}_k \|e^{k+1}\|^2 \\ 
		&&  + \mathbb{E}_k \left[ \frac{1}{\theta_1}(f(y^{k+1}) - f(x^k)) - \frac{2L}{n{\cal L}_1\theta_1}(f(w^k) - f(x^k) - \langle \nabla f(x^k), w^k-x^k \rangle ) \right]. 
	\end{eqnarray*}
	
	Therefore, 	
	\begin{eqnarray*}
		&& \mathbb{E}_k \left[ f(x^*) - \psi(z^{k+1}) + \psi(x^*) - {\tilde {\cal Z}}^{k+1} \right] +  \left(   \frac{{\cal L}_1}{2\eta} + \frac{\mu_f}{2}  \right)  \|e^k\|^2  + \left(   \frac{{\cal L}_1}{2\eta} + \frac{\mu}{2}  \right) \mathbb{E}_k \|e^{k+1}\|^2 \\
		&\geq& - \frac{{\cal L}_1{\tilde {\cal Z}}^k}{{\cal L}_1 + \eta\mu/2} - \frac{1-\theta_1-\theta_2}{\theta_1} f(y^k) + \frac{1}{\theta_1} \mathbb{E}_k [f(y^{k+1})]  - \frac{\theta_2}{\theta_1} \left(  f(x^k) + \langle \nabla f(x^k), w^k - x^k \rangle  \right) \\ 
		&& - \frac{2L}{n{\cal L}_1\theta_1}(f(w^k) - f(x^k) - \langle \nabla f(x^k), w^k-x^k \rangle ) \\
		&=&  - \frac{{\cal L}_1{\tilde {\cal Z}}^k}{{\cal L}_1 + \eta\mu/2} - \frac{1-\theta_1-\theta_2}{\theta_1} f(y^k) + \frac{1}{\theta_1}\mathbb{E}_k [f(y^{k+1})] - \frac{\theta_2}{\theta_1} f(w^k) \\ 
		&& + \frac{1}{\theta_1} \left(  \theta_2 - \frac{2L}{n{\cal L}_1}  \right) (f(w^k) - f(x^k) - \langle \nabla f(x^k), w^k-x^k \rangle ). 
	\end{eqnarray*}
	
	From the convexity of $\psi$, and 
	$$
	y^{k+1} = x^k + \theta_1 (z^{k+1} - z^k) = \theta_1 z^{k+1} + \theta_2 w^k + (1- \theta_1 - \theta_2) y^k, 
	$$
	we have 
	$$
	\psi(z^{k+1}) \geq \frac{1}{\theta_1}\psi(y^{k+1}) - \frac{\theta_2}{\theta_1}\psi(w^k) - \frac{1-\theta_1 -\theta_2}{\theta_1} \psi(y^k). 
	$$
	
	Hence, we can obtain 
	\begin{eqnarray*}
		 P(x^*) +  \left(   \frac{{\cal L}_1}{2\eta} + \frac{\mu_f}{2}  \right)  \|e^k\|^2  + \left(   \frac{{\cal L}_1}{2\eta} + \frac{\mu}{2}  \right) \mathbb{E}_k \|e^{k+1}\|^2
		&\geq & \mathbb{E}_k[{\tilde {\cal Z}}^{k+1}] - \frac{{\cal L}_1{\tilde {\cal Z}}^k}{{\cal L}_1 + \eta\mu/2} - \frac{1-\theta_1 - \theta_2}{\theta_1} P(y^k)  \\
		&& \quad + \frac{1}{\theta_1} \mathbb{E}_k [P(y^{k+1})] - \frac{\theta_2}{\theta_1} P(w^k) \\
		&& \quad + \frac{1}{\theta_1} \left(  \theta_2 - \frac{2L}{n{\cal L}_1}  \right) (f(w^k) - f(x^k) - \langle \nabla f(x^k), w^k-x^k \rangle ). 
	\end{eqnarray*}
	
	After rearranging we can get the result.

	\subsection{Proof of Lemma \ref{lm:ek+1}}

	First, we have 
	\begin{eqnarray*}
		 \mathbb{E}_k [\|e^{k+1}_\tau\|^2 ] 
		&=& \mathbb{E}_k \left\|e^k_\tau + \frac{\eta}{{\cal L}_1} g^k_\tau - {\tilde g}^k_\tau \right\|^2 \\ 
		&\leq& (1-\delta) \mathbb{E}_k \left\|e^k_\tau + \frac{\eta}{{\cal L}_1} g^k_\tau \right\|^2 \\ 
		&=& (1-\delta) \left\|e^k_\tau + \frac{\eta}{{\cal L}_1}(\nabla f^{(\tau)}(x^k) - \nabla f^{(\tau)}(w^k) ) \right\|^2  + \frac{\eta^2}{{\cal L}_1^2}(1-\delta) \mathbb{E}_k\|g^k_\tau - (\nabla f^{(\tau)}(x^k) - \nabla f^{(\tau)}(w^k) )\|^2 \\ 
		&\leq& (1-\delta)(1+\beta) \|e^k_\tau\|^2 + (1-\delta)\left(  1 + \frac{1}{\beta}   \right) \frac{\eta^2}{{\cal L}_1^2} \| \nabla f^{(\tau)}(x^k) - \nabla f^{(\tau)}(w^k) \|^2 \\ 
		&& \quad + \frac{\eta^2}{{\cal L}_1^2}(1-\delta) \mathbb{E}_k\|g^k_\tau - (\nabla f^{(\tau)}(x^k) - \nabla f^{(\tau)}(w^k) )\|^2 \\ 
		&\leq& \left(  1 - \frac{\delta}{2}  \right) \|e^k_\tau\|^2 + \frac{2(1-\delta)\eta^2}{\delta {\cal L}_1^2}   \| \nabla f^{(\tau)}(x^k) - \nabla f^{(\tau)}(w^k) \|^2 \\
		&&  \quad + \frac{\eta^2}{{\cal L}_1^2}(1-\delta) \mathbb{E}_k\|g^k_\tau - (\nabla f^{(\tau)}(x^k) - \nabla f^{(\tau)}(w^k) )\|^2, 
	\end{eqnarray*}
	where we choose $\beta = \frac{\delta}{2(1-\delta)}$ when $\delta<1$. When $\delta=1$, it is easy to see the above inequality also holds. 	Since 
	\begin{eqnarray*}
		 \mathbb{E}_k\|g^k_\tau - (\nabla f^{(\tau)}(x^k) - \nabla f^{(\tau)}(w^k) )\|^2 
		&\leq& \mathbb{E}_k \|g^k_\tau\|^2 \\ 
		&\leq& 2L \mathbb{E}_k (f^{(\tau)}_{i^\tau_k}(w^k) - f^{(\tau)}_{i^\tau_k}(x^k) - \langle \nabla f^{(\tau)}_{i^\tau_k}(x^k), w^k-x^k \rangle )\\ 
		&=& 2L (f^{(\tau)}(w^k) - f^{(\tau)}(x^k) - \langle \nabla f^{(\tau)}(x^k), w^k-x^k \rangle ), 
	\end{eqnarray*}
	
	and 
	$$
	\| \nabla f^{(\tau)}(x^k) - \nabla f^{(\tau)}(w^k) \|^2 \leq 2{\bar L} (f^{(\tau)}(w^k) - f^{(\tau)}(x^k) - \langle \nabla f^{(\tau)}(x^k), w^k-x^k \rangle ), 
	$$
	we arrive at 
	\begin{eqnarray*}
 \mathbb{E}_k[\|e^{k+1}_\tau \|^2] 
	&\leq&  \left(  1 - \frac{\delta}{2}  \right) \|e^k_\tau\|^2 + \frac{2(1-\delta)\eta^2}{{\cal L}_1^2} \left(  \frac{2{\bar L}}{\delta} + L  \right) (f^{(\tau)}(w^k) - f^{(\tau)}(x^k) - \langle \nabla f^{(\tau)}(x^k), w^k-x^k \rangle ). 
	\end{eqnarray*}

	Therefore, 
	\begin{eqnarray*}
		 \mathbb{E}_k \left[  \frac{1}{n}\sum_{\tau=1}^n \|e^{k+1}_\tau \|^2   \right] 		&\leq&  \left(  1 - \frac{\delta}{2}  \right) \frac{1}{n} \sum_{\tau=1}^n \|e^k_\tau\|^2 \\ 
		&& + \frac{2(1-\delta)\eta^2}{{\cal L}_1^2} \left(  \frac{2{\bar L}}{\delta} + L  \right) \frac{1}{n} \sum_{\tau=1}^n (f^{(\tau)}(w^k) - f^{(\tau)}(x^k) - \langle \nabla f^{(\tau)}(x^k), w^k-x^k \rangle ) \\
		&=&  \left(  1 - \frac{\delta}{2}  \right) \frac{1}{n} \sum_{\tau=1}^n \|e^k_\tau\|^2 + \frac{2(1-\delta)\eta^2}{{\cal L}_1^2} \left(  \frac{2{\bar L}}{\delta} + L  \right)  (f(w^k) - f(x^k) - \langle \nabla f(x^k), w^k-x^k \rangle ) .
	\end{eqnarray*}

	\subsection{Proof of Lemma \ref{lm:ek+1-2}}

	Under Assumption \ref{as:expcompressor}, we have $\mathbb{E}[Q(x)] = \delta x$, and 
	\begin{eqnarray*}
		 \mathbb{E}_k\|e^{k+1}\|^2 
		&=& \mathbb{E}_k \left\| \frac{1}{n} \sum_{\tau=1}^n e^{k+1}_{\tau} \right\|^2 \\
		&=& \frac{1}{n^2} \sum_{i, j} \mathbb{E}_k \langle e^{k+1}_i, e^{k+1}_j \rangle \\ 
		&=& \frac{1}{n^2} \sum_{\tau=1}^n \mathbb{E}_k \|e^{k+1}_{\tau}\|^2 + \frac{1}{n^2} \sum_{i\neq j} \mathbb{E}_k \langle e^{k+1}_i, e^{k+1}_j \rangle \\
		&\leq& \frac{1-\delta}{n^2} \sum_{\tau=1}^n \mathbb{E}_k \left\|e^k_{\tau} + \frac{\eta}{{\cal L}_1} g^k_{\tau} \right\|^2 + \frac{(1-\delta)^2}{n^2} \sum_{i\neq j} \mathbb{E}_k \left\langle e^k_i + \frac{\eta}{{\cal L}_1}  g^k_i , e^k_j + \frac{\eta}{{\cal L}_1} g^k_j \right\rangle \\ 
		&=& \frac{(1-\delta)^2}{n^2} \mathbb{E}_k \left\|\sum_{\tau=1}^n (e^k_{\tau} + \frac{\eta}{{\cal L}_1} g^k_{\tau}) \right\|^2 + \frac{(1-\delta)\delta}{n^2} \sum_{\tau=1}^n \mathbb{E}_k \left\|e^k_{\tau} + \frac{\eta}{{\cal L}_1} g^k_{\tau} \right\|^2 \\ 
		&\leq& (1-\delta) \mathbb{E}_k \left\|e^k + \frac{\eta}{{\cal L}_1} g^k \right\|^2 + \frac{(1-\delta)\delta}{n^2} \sum_{\tau=1}^n \mathbb{E}_k \left\|e^k_{\tau} + \frac{\eta}{{\cal L}_1} g^k_{\tau} \right\|^2. 
	\end{eqnarray*}
	
	Under Assumption \ref{as:topkcompressor}, we have 
	\begin{eqnarray*}
		\mathbb{E}_k \|e^{k+1}\|^2 &=&  \mathbb{E}_k \left\| \frac{1}{n} \sum_{\tau=1}^n e^{k+1}_{\tau} \right\|^2 \\
		&=& \mathbb{E}_k  \left\| \frac{1}{n} \sum_{\tau=1}^n \left( e^k_{\tau} + \frac{\eta}{{\cal L}_1} g^k_{\tau} - Q\left(\frac{\eta}{{\cal L}_1} g^k_{\tau} + e^k_{\tau} \right) \right) \right\|^2 \\
		&\overset{Assumption \ref{as:topkcompressor}}{\leq}&(1 - \delta^\prime) \mathbb{E}_k \left\| e^k + \frac{\eta}{{\cal L}_1} g^k \right\|^2 \\ 
		&\leq&  (1-\delta) \mathbb{E}_k \left\|e^k + \frac{\eta}{{\cal L}_1} g^k \right\|^2. 
	\end{eqnarray*}
	
	Overall, under Assumption \ref{as:expcompressor} or Assumption \ref{as:topkcompressor}, we have 
	\begin{eqnarray*}
		\mathbb{E}_k \|e^{k+1}\|^2 &\leq&  (1-\delta) \mathbb{E}_k \left\|e^k + \frac{\eta}{{\cal L}_1} g^k \right\|^2 + \frac{(1-\delta)\delta}{n^2} \sum_{\tau=1}^n \mathbb{E}_k \left\|e^k_{\tau} + \frac{\eta}{{\cal L}_1} g^k_{\tau} \right\|^2 \\ 
		&\leq& (1-\delta) \mathbb{E}_k \left\|e^k + \frac{\eta}{{\cal L}_1} g^k \right\|^2 + \frac{2(1-\delta)\delta}{n^2} \sum_{\tau=1}^n \|e^k_{\tau} \|^2  + \frac{2(1-\delta)\delta \eta^2 }{n^2 {\cal L}_1^2} \sum_{\tau=1}^n \mathbb{E}_k \| g^k_{\tau} \|^2 .
	\end{eqnarray*}

The first term on the right hand side above can be bounded as
	\begin{eqnarray*}
 (1-\delta)\mathbb{E}_k \left\|e^k + \frac{\eta}{{\cal L}_1} g^k \right\|^2 	&=& (1-\delta) \mathbb{E}_k \left\|e^k + \frac{\eta}{{\cal L}_1} (\nabla f(x^k) - \nabla f(w^k)) + \frac{\eta}{{\cal L}_1} g^k - \frac{\eta}{{\cal L}_1} (\nabla f(x^k) - \nabla f(w^k)) \right\|^2 \\ 
		&=& (1-\delta) \mathbb{E}_k \left\|e^k + \frac{\eta}{{\cal L}_1} (\nabla f(x^k) - \nabla f(w^k)) \right\|^2  + (1-\delta)\frac{\eta^2}{{\cal L}_1^2} \mathbb{E}_k \|g^k - (\nabla f(x^k) - \nabla f(w^k))\|^2 \\ 
		&\leq& \left(  1 - \frac{\delta}{2}  \right) \|e^k\|^2 + \frac{2(1-\delta)\eta^2}{\delta {\cal L}_1^2} \|\nabla f(x^k) - \nabla f(w^k)\|^2 \\ 
		&& + (1-\delta)\frac{\eta^2}{{\cal L}_1^2} \mathbb{E}_k \|g^k - (\nabla f(x^k) - \nabla f(w^k))\|^2 \\ 
		&\overset{(\ref{eq:expL2})}{\leq}&  \left(  1 - \frac{\delta}{2}  \right) \|e^k\|^2 + \frac{2(1-\delta)\eta^2}{\delta {\cal L}_1^2} \|\nabla f(x^k) - \nabla f(w^k)\|^2 \\ 
		&& +  (1-\delta)\frac{2L\eta^2}{n{\cal L}_1^2}  (f(w^k) - f(x^k) - \langle \nabla f(x^k), w^k-x^k \rangle ) \\ 
		&\leq& \left(  1 - \frac{\delta}{2}  \right) \|e^k\|^2  + \frac{2(1-\delta)\eta^2}{{\cal L}_1^2} \left(  \frac{2L_f}{\delta} + \frac{L}{n}  \right) (f(w^k) - f(x^k) - \langle \nabla f(x^k), w^k-x^k \rangle ). 
	\end{eqnarray*}
	
	Moreover, 
	\begin{eqnarray*}
		\frac{1}{n} \sum_{\tau=1}^n \mathbb{E}_k \|g^k_\tau\|^2 &\leq& \frac{2L}{n} \sum_{\tau=1}^n \mathbb{E}_k ( f^{(\tau)}_{i_k^\tau}(w^k) - f^{(\tau)}_{i_k^\tau}(x^k) - \langle \nabla f^{(\tau)}_{i_k^\tau}(x^k), w^k-x^k \rangle ) \\ 
		&=& 2L  (f(w^k) - f(x^k) - \langle \nabla f(x^k), w^k-x^k \rangle ). 
	\end{eqnarray*}
	
	Hence, 
	\begin{eqnarray*}
		 \mathbb{E}_k \|e^{k+1}\|^2 
		&\leq&  \left(  1 - \frac{\delta}{2}  \right) \|e^k\|^2 + \frac{2(1-\delta)\delta}{n^2} \sum_{\tau=1}^n \|e^k_{\tau} \|^2 \\ 
		&& +  \frac{2(1-\delta)\eta^2}{{\cal L}_1^2} \left(  \frac{2L_f}{\delta} + \frac{L}{n} + \frac{2L\delta}{n} \right) (f(w^k) - f(x^k) - \langle \nabla f(x^k), w^k-x^k \rangle ) \\
		&\leq&  \left(  1 - \frac{\delta}{2}  \right) \|e^k\|^2 + \frac{2(1-\delta)\delta}{n^2} \sum_{\tau=1}^n \|e^k_{\tau} \|^2 \\ 
		&& +  \frac{2(1-\delta)\eta^2}{{\cal L}_1^2} \left(  \frac{2L_f}{\delta} + \frac{3L}{n} \right) (f(w^k) - f(x^k) - \langle \nabla f(x^k), w^k-x^k \rangle ).
	\end{eqnarray*}

	\section{Proof of Theorem \ref{th:eclkatyusha-1}}

	From $\|e^k\|^2 \leq \frac{1}{n} \sum_{\tau=1}^n \|e^k_\tau\|^2$, Equation \eqref{eq:wk+1}, and Lemma \ref{lm:zyk+1}, we can obtain 
	\begin{eqnarray*}
	 \mathbb{E}_k\left[  {\tilde {\cal Z}}^{k+1} + {\cal Y}^{k+1} + {\cal W}^{k+1} \right] 
		&\leq& \frac{{\cal L}_1{\tilde {\cal Z}}^k}{{\cal L}_1 + \eta\mu/2} + (1-\theta_1 - \theta_2 + \frac{\theta_2}{q}) {\cal Y}^k + (1-p+pq){\cal W}^k \\ 
		&& \quad + \left(   \frac{{\cal L}_1}{2\eta} + \frac{\mu_f}{2}  \right) \frac{1}{n} \sum_{\tau=1}^n \|e^k_\tau\|^2 + \left(   \frac{{\cal L}_1}{2\eta} + \frac{\mu}{2}  \right) \mathbb{E}_k \frac{1}{n} \sum_{\tau=1}^n \|e^{k+1}_\tau\|^2 \\ 
		&&  \quad  -  \frac{1}{\theta_1} \left(  \theta_2 - \frac{2L}{n{\cal L}_1}  \right) (f(w^k) - f(x^k) - \langle \nabla f(x^k), w^k-x^k \rangle ) \\ 
		&\overset{\text{Lemma}~\ref{lm:ek+1}}{\leq}& \frac{{\cal L}_1{\tilde {\cal Z}}^k}{{\cal L}_1 + \eta\mu/2} + (1-\theta_1 - \theta_2 + \frac{\theta_2}{q}) {\cal Y}^k + (1-p+pq){\cal W}^k \\ 
		&& \quad + \left(   \frac{{\cal L}_1}{\eta} + \mu \right) \frac{1}{n} \sum_{\tau=1}^n \|e^k_\tau\|^2 \\ 
		&& \quad -  \left( \frac{1}{\theta_1} \left(  \theta_2 - \frac{2L}{n{\cal L}_1}  \right) - \frac{2(1-\delta)\eta^2}{{\cal L}_1^2} \left(  \frac{2{\bar L}}{\delta} + L  \right) \left(  \frac{{\cal L}_1}{2\eta} + \frac{\mu}{2}  \right) \right) \\ 
		&& \quad \cdot (f(w^k) - f(x^k) - \langle \nabla f(x^k), w^k-x^k \rangle ) \\ 
		&\leq& \frac{{\cal L}_1{\tilde {\cal Z}}^k}{{\cal L}_1 + \eta\mu/2} + (1-\theta_1 - \theta_2 + \frac{\theta_2}{q}) {\cal Y}^k + (1-p+pq){\cal W}^k \\ 
		&& \quad +  \frac{4{\cal L}_1}{3\eta} \frac{1}{n} \sum_{\tau=1}^n \|e^k_\tau\|^2 - \frac{1}{\theta_1} \left(  \theta_2 - \frac{1}{{\cal L}_1} \left(  \frac{2L}{n} + \frac{8(1-\delta) {\bar L}}{9\delta} + \frac{4(1-\delta) L}{9}  \right)  \right) \\ 
		&& \quad \cdot  (f(w^k) - f(x^k) - \langle \nabla f(x^k), w^k-x^k \rangle ). 
	\end{eqnarray*}
	
	Therefore, from Lemma \ref{lm:ek+1}, we have 
	\begin{eqnarray*}
		&& \mathbb{E}_k\left[  {\tilde {\cal Z}}^{k+1} + {\cal Y}^{k+1} + {\cal W}^{k+1} + \frac{4{\cal L}_1}{\delta \eta} \cdot \frac{1}{n} \sum_{\tau=1}^n \|e^{k+1}\|^2 \right] \\ 
		&\leq& \frac{{\cal L}_1{\tilde {\cal Z}}^k}{{\cal L}_1 + \eta\mu/2} + (1-\theta_1 - \theta_2 + \frac{\theta_2}{q}) {\cal Y}^k + (1-p+pq){\cal W}^k \\ 
		&& + \left(  1 - \frac{\delta}{6}  \right)\frac{4{\cal L}_1}{\delta \eta} \cdot \frac{1}{n} \sum_{\tau=1}^n \|e^k\|^2  - \frac{1}{\theta_1} \left(  \theta_2 - \frac{1}{{\cal L}_1} \left(   \frac{2L}{n} +  \frac{56(1-\delta) {\bar L}}{9\delta^2} + \frac{28(1-\delta) L}{9\delta}   \right)  \right) \\ 
		&& \cdot (f(w^k) - f(x^k) - \langle \nabla f(x^k), w^k-x^k \rangle ) \\ 
		&=& \frac{{\cal L}_1{\tilde {\cal Z}}^k}{{\cal L}_1 + \eta\mu/2} + (1-\theta_1 - \theta_2 + \frac{\theta_2}{q}) {\cal Y}^k + (1-p+pq){\cal W}^k + \left(  1 - \frac{\delta}{6}  \right)\frac{4{\cal L}_1}{\delta \eta} \cdot \frac{1}{n} \sum_{\tau=1}^n \|e^k\|^2 \\ 
		&& - \frac{1}{\theta_1} \left(  \theta_2 - \frac{{\cal L}_2}{2{\cal L}_1}  \right) (f(w^k) - f(x^k) - \langle \nabla f(x^k), w^k-x^k \rangle ). 
	\end{eqnarray*}
	
	When $\theta_2 \geq \frac{{\cal L}_2}{2{\cal L}_1}$ we can get the result.

	\section{Proof of Theorem \ref{th:eclkatyusha-2}} 
	
	From Lemma \ref{lm:zyk+1} and (\ref{eq:wk+1}), we have

	\begin{eqnarray*}
		&& \mathbb{E}_k\left[  {\tilde {\cal Z}}^{k+1} + {\cal Y}^{k+1} + {\cal W}^{k+1} \right] \\ 
		&\leq & \frac{{\cal L}_1{\tilde {\cal Z}}^k}{{\cal L}_1 + \eta\mu/2} + \left(1-\theta_1 - \theta_2 + \frac{\theta_2}{q} \right) {\cal Y}^k + (1-p+pq){\cal W}^k \\ 
		&& \quad + \left(   \frac{{\cal L}_1}{2\eta} + \frac{\mu_f}{2}  \right) \|e^k\|^2 + \left(   \frac{{\cal L}_1}{2\eta} + \frac{\mu}{2}  \right) \mathbb{E}_k \|e^{k+1}\|^2  \\ 
		&& \quad  -  \frac{1}{\theta_1} \left(  \theta_2 - \frac{2L}{n{\cal L}_1}  \right) (f(w^k) - f(x^k) - \langle \nabla f(x^k), w^k-x^k \rangle ) \\ 
		&\overset{\text{Lemma}~\ref{lm:ek+1-2}}{\leq}& \frac{{\cal L}_1{\tilde {\cal Z}}^k}{{\cal L}_1 + \eta\mu/2} + \left(1-\theta_1 - \theta_2 + \frac{\theta_2}{q} \right) {\cal Y}^k + (1-p+pq){\cal W}^k \\ 
		&& \quad + \left(   \frac{{\cal L}_1}{\eta} + \mu  \right) \|e^k\|^2 + \left(   \frac{{\cal L}_1}{2\eta} + \frac{\mu}{2}  \right) \frac{2(1-\delta) \delta}{n} \cdot \frac{1}{n} \sum_{\tau=1}^n \|e^k_\tau\|^2 \\ 
		&& \quad  -  \left( \frac{1}{\theta_1} \left(  \theta_2 - \frac{2L}{n{\cal L}_1}  \right) - \frac{4(1-\delta) \eta}{3{\cal L}_1} \left(  \frac{2L_f}{\delta} + \frac{3L}{n}  \right) \right)   (f(w^k) - f(x^k) - \langle \nabla f(x^k), w^k-x^k \rangle ) \\ 
		&\leq & \frac{6\theta_1{\cal L}_1{\tilde {\cal Z}}^k}{ 6\theta_1{\cal L}_1 + \mu} + \left(1-\theta_1 - \theta_2 + \frac{\theta_2}{q} \right) {\cal Y}^k + (1-p+pq){\cal W}^k + \frac{4{\cal L}_1}{3\eta} \|e^k\|^2 \\ 
		&&  \quad + \frac{4{\cal L}_1(1-\delta) \delta}{3\eta n} \cdot \frac{1}{n} \sum_{\tau=1}^n \|e^k_\tau\|^2  \\
		&& \quad - \frac{1}{\theta_1} \left(   \theta_2 - \frac{2L}{n{\cal L}_1} - \frac{4(1-\delta)}{9{\cal L}_1} \left(  \frac{2L_f}{\delta} + \frac{3L}{n}  \right)  \right)   (f(w^k) - f(x^k) - \langle \nabla f(x^k), w^k-x^k \rangle ). 
	\end{eqnarray*}

	Therefore, from Lemma \ref{lm:ek+1} and Lemma \ref{lm:ek+1-2}, we can get 
	
	\begin{eqnarray*}
		&& \mathbb{E}_k\left[  {\tilde {\cal Z}}^{k+1} + {\cal Y}^{k+1} + {\cal W}^{k+1} + \frac{4{\cal L}_1}{\delta \eta} \|e^{k+1}\|^2  + \frac{28{\cal L}_1(1-\delta)}{\delta \eta n} \cdot \frac{1}{n} \sum_{\tau=1}^n \|e^{k+1}_\tau\|^2  \right] \\ 
		&\leq&  \frac{6\theta_1{\cal L}_1{\tilde {\cal Z}}^k}{ 6\theta_1{\cal L}_1 + \mu} + (1-\theta_1 - \theta_2 + \frac{\theta_2}{q}) {\cal Y}^k + (1-p+pq){\cal W}^k \\ 
		&& + \left(  1 - \frac{\delta}{6}  \right) \frac{4{\cal L}_1}{\delta \eta} \|e^k\|^2 + \left(  1 - \frac{\delta}{6}  \right) \frac{28{\cal L}_1(1-\delta)}{\delta \eta n} \cdot \frac{1}{n} \sum_{\tau=1}^n \|e^{k}_\tau\|^2 \\
		&& - \frac{1}{\theta_1} \left(   \theta_2 - \frac{2L}{n{\cal L}_1} - \frac{28(1-\delta)}{9\delta {\cal L}_1} \left(  \frac{2L_f}{\delta} + \frac{3L}{n}  \right)  - \frac{56(1-\delta)}{3\delta {\cal L}_1} \left(  \frac{2{\bar L}}{\delta n} + \frac{L}{n}  \right) \right) \\ 
		&& \cdot  (f(w^k) - f(x^k) - \langle \nabla f(x^k), w^k-x^k \rangle ) \\ 
		&\leq&  \frac{6\theta_1{\cal L}_1{\tilde {\cal Z}}^k}{ 6\theta_1{\cal L}_1 + \mu} + (1-\theta_1 - \theta_2 + \frac{\theta_2}{q}) {\cal Y}^k + (1-p+pq){\cal W}^k \\ 
		&& + \left(  1 - \frac{\delta}{6}  \right) \frac{4{\cal L}_1}{\delta \eta} \|e^k\|^2 + \left(  1 - \frac{\delta}{6}  \right) \frac{28{\cal L}_1(1-\delta)}{\delta \eta n} \cdot \frac{1}{n} \sum_{\tau=1}^n \|e^{k}_\tau\|^2 \\
		&& - \frac{1}{\theta_1} \left(   \theta_2 - \frac{2L}{n{\cal L}_1} - \frac{392(1-\delta) L_f}{9\delta^2 {\cal L}_1} - \frac{28(1-\delta) L}{\delta {\cal L}_1 n} \right) \\ 
		&& \cdot (f(w^k) - f(x^k) - \langle \nabla f(x^k), w^k-x^k \rangle ) \\ 
		&=&  \frac{6\theta_1{\cal L}_1{\tilde {\cal Z}}^k}{ 6\theta_1{\cal L}_1 + \mu} + (1-\theta_1 - \theta_2 + \frac{\theta_2}{q}) {\cal Y}^k + (1-p+pq){\cal W}^k \\ 
		&& + \left(  1 - \frac{\delta}{6}  \right) \frac{4{\cal L}_1}{\delta \eta} \|e^k\|^2 + \left(  1 - \frac{\delta}{6}  \right) \frac{28{\cal L}_1(1-\delta)}{\delta \eta n} \cdot \frac{1}{n} \sum_{\tau=1}^n \|e^{k}_\tau\|^2 \\
		&& - \frac{1}{\theta_1} \left(   \theta_2 - \frac{{\cal L}_3}{2{\cal L}_1} \right)  (f(w^k) - f(x^k) - \langle \nabla f(x^k), w^k-x^k \rangle ) ,
	\end{eqnarray*}
	where we use ${\bar L} \leq n L_f$ in the second inequality. When $\theta_2 \geq  \frac{{\cal L}_3}{2{\cal L}_1}$ we can get the result.

	\section{Proof of Corollary \ref{co:eclkatyusha}}

\begin{itemize}

\item[(i)] First, we have $\frac{1}{2} \geq \theta_2 \geq \frac{{\cal L}_2}{2{\cal L}_1}$. Form the definition of $\theta_1$, we know $\theta_1 \leq \frac{1}{2}$. Hence $\theta_1 + \theta_2 \leq 1$. Next we discuss two cases:
\begin{itemize} 
\item	 {\bf Case 1: $3\mu \eta < {\cal L}_1$.} In this case, we have ${\cal L}_1 = \max\{  {\cal L}_4, L_f  \}$. Then from Theorem \ref{th:eclkatyusha-1} and same as the proof of Theorem 3.2 in \cite{qian2019svrg}, we have $\mathbb{E}[\Phi^k] \leq \epsilon \Phi^0$ as long as 
	$$
	k \geq O\left(  \frac{1}{\delta} + \frac{1}{p} + \sqrt{\frac{L_f}{\mu}} + \sqrt{\frac{{\cal L}_4}{\mu p}}  \right). 
	$$
	
	Since ${\cal L}_4 = {\cal L}_2$, we can get the result. 
	
\item 	{\bf Case 2: $3\mu \eta = {\cal L}_1$.} In this case, we have 
	$$
	\frac{\mu}{\mu + 6\theta_1 {\cal L}_1} = \frac{\mu}{\mu + 6\mu} = \frac{1}{7} \geq \frac{p}{7}. 
	$$
	
	Hence, from Theorem \ref{th:eclkatyusha-1} and same as the proof of Theorem 3.2 in \cite{qian2019svrg}, we also have $\mathbb{E}[\Phi^k] \leq \epsilon \Phi^0$ for  
	$$
	k \geq O\left(  \frac{1}{\delta} + \frac{1}{p} + \sqrt{\frac{L_f}{\mu}} + \sqrt{\frac{{\cal L}_4}{\mu p}}  \right). 
	$$
	Since ${\cal L}_4 = {\cal L}_2$, we can get the result. 

\end{itemize}
	
\item[(ii)] By using Theorem \ref{th:eclkatyusha-2}, same as (i), we can get the result.

\end{itemize}

\end{document}